\newtheorem{tw}{Theorem}[section]
\newtheorem{lm}[tw]{Lemma}
\newtheorem{wn}[tw]{Corollary}
\newtheorem{pr}[tw]{Proposition}
\theoremstyle{definition}
\newtheorem{df}{Definition}[section]
\newtheorem{uw}{Remark}[section]
\newcommand{\1}{\mathbbm{1}}
\newcommand{\xbm}{(X,{\cal B},\mu)}
\newcommand{\ycn}{(Y,{\cal C},\nu)}
\newcommand{\zdr}{(Z,{\cal D},\rho)}
\newcommand{\vep}{\varepsilon}
\newcommand{\Hinv}{H^{\otimes n}_{inv}(G)}
\title{Disjointness properties for Cartesian products of weakly mixing systems}
\author{Joanna Ku\l{}aga-Przymus\footnote{Research partially supported by MNiSzW grant N N201 384834 and Marie Curie "Transfer of Knowledge" program, project MTKD-CT-2005-030042 (TODEQ).}\\{\small Faculty of Mathematics and Computer Science,}\\ {\small Nicolaus Copernicus University},\\{\small ul. Chopina 12/18, 87-100 Toru\'n, Poland}\\ {\small e-mail: joanna.kulaga@gmail.com}\and Fran\c{c}ois Parreau\\{\small Laboratoire d'Analyse, G\'{e}om\'{e}trie et Applications,}\\{\small UMR 7539 Universit\'{e} Paris 13 et CNRS,}\\ {\small 99 av. J.-B. Cl\'{e}ment, 94430 Villetaneuse, France}\\ {\small e-mail: parreau@math.univ-paris13.fr}}
\begin{document}
\bibliographystyle{plain}
\maketitle

\begin{abstract}
For $n\geq 1$ we consider the class JP($n$) of dynamical systems whose every ergodic joining with a Cartesian product of $k$ weakly mixing automorphisms ($k\geq n$) can be represented as the independent extension of a joining of the system with only $n$ coordinate factors. For $n\geq 2$ we show that, whenever the maximal spectral type of a weakly mixing automorphism $T$ is singular with respect to the convolution of any $n$ continuous measures, i.e. $T$ has the so-called convolution singularity property of order $n$, then $T$ belongs to JP($n-1$). To provide examples of such automorphisms, we exploit spectral simplicity on symmetric Fock spaces. This also allows us to show that for any $n\geq 2$ the class JP($n$) is essentially larger than JP($n-1$). Moreover, we show that all members of JP($n$) are disjoint from ergodic automorphisms generated by infinitely divisible stationary processes.
\end{abstract}

\tableofcontents

\section{Introduction}
In this paper we deal with several properties of dynamical systems which are related to the notion of disjointness. This notion was introduced by Furstenberg~\cite{MR0213508} and, among other motivations, bore fruit in the development of tools to classify dynamical systems and construct examples of different behaviour. We will devote our attention to classes of automorphisms enjoying the so-called \emph{joining primeness property of order n} (JP($n$)) introduced in~\cite{MR2729082}. The JP($n$) class consists of automorphisms whose all ergodic joinings with Cartesian products of weakly mixing automorphisms are in fact joinings with at most $n$ coordinate factors, the remaining coordinate factors being joined by taking the product measure (for the precise definition see Section~\ref{sec:def}). In particular, weakly mixing automorphisms with the JP($n$) property do not admit a representation as Cartesian products of more than $n$ factors.

For each $n\geq 1$ we give examples of automorphisms enjoying the JP($n+1$) property and not the JP($n$) property (in~\cite{MR2729082} it was already shown that JP($1$)$\subsetneq$JP($2$)). It seemed natural to look for such examples among weakly mixing systems which are Cartesian products of $n+1$ copies of some automorphism, since such representation automatically implies that the JP($n$) property does not hold. The main tool which we use to find systems with the joining primeness property of a given order among Cartesian products is spectral theory, in particular the property of \emph{convolution singularity of order n} (CS($n$)). We say that a Borel measure $\mu$ on the circle group $\mathbb{T}$ has the CS($n$) property if it is singular with respect to the convolution of any $n$ continuous measures (see Section~\ref{sekcjaCS}). When $\mu$ is a convolution power of some measure, the CS($n$) property is tightly connected with the spectral simplicity of symmetric tensor products of some unitary operator. We will show that whenever the Gaussian system determined by the reduced maximal spectral type $\sigma_T$ of an automorphism $T$ has simple spectrum, then the $n$-th convolution power of $\sigma_T$ has the CS($n+1$) property. This, in turn, combined with the fact that all weakly mixing automorphisms having the CS($n+1$) property enjoy also the JP($n$) property will result in constructing examples announced in the beginning of this paragraph. As a byproduct, we give a proof of the folklore result that the spectrum of the infinite direct sum of unitary operators $\bigoplus_{n=1}^{\infty}U^{\odot n}$ is simple provided that the spectra of all of $U^{\odot n}$, $n\geq 1$ are simple.

The properties connected with spectral simplicity of symmetric tensor products were studied before by numerous authors. Let us list only some of them. In \cite{katokstepin1967} Katok and Stepin disproved Kolmogorov's conjecture (see eg.~\cite{sinai1963}) that for any ergodic system the convolution square of the maximal spectral type is absolutely continuous with respect to the maximal spectral type. Stepin in \cite{Stepin1968} showed that for a typical (with respect to the weak operator topology) non-mixing automorphism the convolution powers of the reduced maximal spectral type are pairwise disjoint: $\sigma_T^{\ast m}\perp \sigma_T^{\ast n}$ for $m\neq n$. Ageev in \cite{MR1680995} showed that even a stronger property is also typical for automorphisms: the Gaussian system generated by the reduced maximal spectral type has simple spectrum. Concrete examples of such systems include the Chacon automorphism (Ageev~\cite{MR1835446}), some mixing automorphisms (Ageev~\cite{MR2465595} and Ryzhikov~\cite{MR2354530}) and also recent examples in terms of special flows (Lema{\'n}czyk and Parreau~\cite{lemanczyk+parreau}).

In \cite{MR2729082} it was shown that automorphisms with the JP($1$) property are disjoint from dynamical systems arising from infinitely divisible (ID) stationary processes. There are many earlier results of a similar flavour. Let us mention here only one of them and refer the reader to the introduction of \cite{MR2729082} for a more exhaustive survey. The JP(1) class includes simple systems (introduced by Veech in~\cite{MR685378} and del Junco and Rudolph in \cite{MR922364}), which in turn contain the systems with the so-called minimal self-joining property (MSJ). In \cite{MR1325699} Thouvenot proved that systems with the MSJ property are disjoint from Gaussian systems. We extend the result from \cite{MR2729082} and show that all automorphisms satisfying the JP($n$) property for some $n\geq 1$ are disjoint from automorphisms arising from ID stationary processes. 

\section{Definitions}\label{sec:def}

\subsection{Tensor products}
Let $H$ be a separable Hilbert space. The space $F(H)=\bigoplus_{n=1}^{\infty}H^{\otimes n}$ is called a \emph{Fock space}. For a unitary operator $U \colon H \to H$, by $F(U)$ we denote the corresponding unitary operator acting on $F(H)$: $F(U)=\bigoplus_{n=1}^{\infty}U^{\otimes n}$. On each subspace $H^{\otimes n}\subset F(H)$ the action of the operator $U^{\otimes n}$ is determined by $U^{\otimes n}(x_1 \otimes \dots \otimes x_n)=Ux_1 \otimes\dots \otimes Ux_n$. 

Let $\pi\in S(n)$, where $S(n)$ stands for the permutation group of the set $\{1,\dots, n\}$. The operator $U_\pi \colon H^{\otimes n}\to H^{\otimes n}$ is defined by setting $U_{\pi} (x_{1} \otimes \dots \otimes x_n)= x_{\pi (1)} \otimes \dots \otimes x_{\pi(n)}$. By $H^{\odot n}$ we denote the space of all elements of $H^{\otimes n}$ invariant with respect to the action of $S(n)$, i.e. $H^{\odot n}=\{\widetilde{x}\in H^{\otimes n} \colon U_{\pi}(\widetilde{x})=\widetilde{x}\text{ for all }\pi \in S(n)\}$. By $U^{\odot n}$ we will understand $U^{\otimes n}|_{H^{\odot n}}$. The \emph{symmetric Fock space} is given by $F_{sym}(H)=\bigoplus_{n=1}^{\infty}H^{\odot n}$ and $F_{sym}(U)$ denotes the operator $\bigoplus_{n=1}^{\infty}U^{\odot n}=F(U)|_{F_{sym}(H)}$. More information about tensor products of Hilbert spaces and unitary operators can be found in~\cite{MR0272042}.

\subsection{Spectral theory}
For a unitary operator $U$ acting on a separable Hilbert space $H$ there exist elements $x_n\in H$ such that
\begin{equation}\label{rozklad}
H=\bigoplus_{n=1}^{\infty}\mathbb{Z}(x_n) \text{ and }\sigma_{x_1} \gg \sigma_{x_2} \gg \dots
\end{equation}
where $\mathbb{Z}(x)=\overline{span}\{U^n(x)\colon n\in \mathbb{Z}\}$ is the \emph{cyclic space} generated by the element $x\in H$ and $\sigma_x$ for $x\in H$ denotes the only finite, positive Borel measure on $\mathbb{T}=\{z\in\mathbb{C} \colon |z|=1\}$ such that $\langle U^nx,x\rangle=\int_{\mathbb{T}}z^n\ d\sigma_x(z)$ (this measure is called the \emph{spectral measure} of $x$). The set of all measures on $\mathbb{T}$ equivalent to $\sigma_{x_1}$ in the decomposition~\eqref{rozklad} is called the \emph{maximal spectral type} of $U$ and is denoted by $\sigma_U$. For $n\geq 2$ the maximal spectral type of the tensor product of operators $U_1,\dots,U_n$ is given by the convolution of their maximal spectral types: $\sigma_{U_1\otimes\dots\otimes U_n}=\sigma_{U_1} \ast\dots\ast \sigma_{U_n}$.

For $n\geq 1$ let $A_n=\{z\in\mathbb{T}\colon \frac{d\sigma_{x_n}}{d\sigma_{x_1}}\neq 0 \}$, where $\frac{d\sigma_{x_n}}{d\sigma_{x_1}}$ is the Radon-Nikodym derivative. The \emph{spectral multiplicity function} $M_{U}\colon \mathbb{T}\to\mathbb{N} \cup \{\infty\}$ is given by $M_U(z)=\sum_{n=1}^{\infty}\1_{A_n}(z)$ and the \emph{spectral multiplicity} of $U$ is the essential supremum of $M_{U}$. If for some $N\geq 1$ $M_U=N$ almost everywhere with respect to the maximal spectral type, we say that $U$ has \emph{homogeneous spectrum of multiplicity $N$}. If moreover $N=1$, we say that $U$ has \emph{simple spectrum}.

Consider the operator $V_{\sigma}\colon L^2(\mathbb{T},\sigma) \to L^2(\mathbb{T},\sigma)$ given by $V_{\sigma}(f)(z)=z\cdot f(z)$, where $\sigma$ is a finite, positive Borel measure on $\mathbb{T}$. It has simple spectrum and its maximal spectral type is $\sigma$. The operator $U|_{\mathbb{Z}(x)} \colon \mathbb{Z}(x)\to\mathbb{Z}(x)$ is spectrally equivalent to $V_x\colon L^2(\mathbb{T},\sigma_x)\to L^2(\mathbb{T},\sigma_x)$. For an introduction to the spectral theory of unitary operators see e.g.~\cite{MR2140546} and for other locally compact abelian group actions see~\cite{MR2186251} or~\cite{lem-ency}.

Given an automorphism $T\colon (X,\mathcal{B},\mu)\to (X,\mathcal{B},\mu)$ of a standard probability Borel space, by its spectral properties we will understand the spectral properties of the associated unitary operator (the so-called \emph{Koopman operator}) $U_T\colon L^2(X,\mathcal{B},\mu)\to L^2(X,\mathcal{B},\mu)$ defined by $U_T(f)=f\circ T$, e.g. the maximal spectral type of $T$ is the maximal spectral type of $U_T$. Since any Koopman operator has an atom at $1$, i.e. $\delta_1\ll \sigma_{U_T}$, we will also use the notion of the reduced maximal spectral type, i.e. $\sigma_{U_T|_{L^2_0(X)}}$, where $L^2_0(X)=\{f\in L^2(X)\colon \int_X f\ d\mu=0\}$.

We recall that spectral simplicity of $F_{sym}(U_T|_{L^2_0(X)})$ is equivalent to spectral simplicity of the Gaussian automorphism associated to the reduced maximal spectral type of $U_T$\footnote{For more information concerning Gaussian systems we refer the reader e.g. to~\cite{MR0272042}.}.

\subsection{Joinings}
Let $T$ and $S$ be automorphisms of standard probability Borel spaces $(X,\mathcal{B},\mu)$ and $(Y,\mathcal{C},\nu)$ respectively. By $J(T,S)$ we denote the set of all \emph{joinings} between $T$ and $S$, i.e. the set of all $T\times S$-invariant probability measures on $(X\times Y,\mathcal{B}\otimes \mathcal{C})$, whose projections on $X$ and $Y$ are equal to $\mu$ and $\nu$ respectively. The subset of $J(T,S)$ consisting of ergodic joinings will be denoted by $J^e(T,S)$. For $J(T,T)$ we write $J(T)$. Joinings are in one-to-one correspondence with Markov operators $\Phi\colon L^2(X,\mathcal{B},\mu)\to L^2(Y,\mathcal{C},\nu)$ satisfying $\Phi\circ T=S \circ \Phi$: 
\begin{align*}
&\Phi \mapsto \lambda_{\Phi} \in J(T,S),\ \lambda_{\Phi}(A\times B)= \int_B \Phi(\1_A)\ d\nu,\\
&\lambda \mapsto \Phi_{\lambda}, \int \Phi_{\lambda}(f)(y)g(y)\ d\nu(y)=\int f(x)g(y)\ d\lambda(x,y).
\end{align*}
We denote the set of such Markov operators by $\mathcal{J}(T,S)$ and endow it with the weak operator topology. This identification allows us to view $J(T)$ as a metrizable compact semitopological semigroup. If $\mathcal{A}\subset \mathcal{C}$ is a factor of $S$ and $\lambda\in J(T,S|_{\mathcal{A}})$ we can consider the joining $\hat{\lambda}\in J(T,S)$ given on $\mathcal{B}\otimes \mathcal{C}$ by
$$
\hat{\lambda}(B\times C)=\int_{X\times Y}\1_{B}(x)E(\1_{C}|\mathcal{A})(y)\ d\lambda(x,y).
$$
It is called the \emph{relatively independent extension} of $\lambda$.

We say that $T$ and $S$ are disjoint if $J(T,S)=\{\mu\otimes\nu\}$ (the notion of disjointness was introduced by H.~Furstenberg in~\cite{MR0213508}). We refer the reader to~\cite{MR1958753} for more information on the theory of joinings and e.g. to~\cite{MR1784644} for a short survey on the basic notions.

\subsection{Convolution singularity}\label{sekcjaCS}
Throughout the paper, by measure on $\mathbb{T}$ we will always mean a positive finite Borel measure.
\begin{df}[\cite{MR2729082}]
A measure $\sigma$ on $\mathbb{T}$ has the \emph{convolution singularity property} (CS) if $\sigma \perp \nu_1\ast \nu_2$ for any continuous measures $\nu_1,\nu_2$ on $\mathbb{T}$. An automorphism $T$ has the CS property if its maximal spectral type has this property.
\end{df}
We can generalize this property and consider singularity with respect to convolutions of more than two measures.
\begin{df}
A measure $\sigma$ on $\mathbb{T}$ has the \emph{convolution singularity property of order $n$} (CS($n$)) if $\sigma \perp \nu_1\ast \dots \ast\nu_n$ for any continuous measures $\nu_1$, \dots, $\nu_n$ on $\mathbb{T}$. An automorphism $T$ has the CS($n$) property if its maximal spectral type has this property.
\end{df}

\subsection{Joining primeness}
The notion of joining primeness (JP) and its generalization - joining primeness of higher order - were introduced in~\cite{MR2729082}.
\begin{df}
Let $T\colon \xbm\to \xbm$ be an ergodic  automorphism of a standard probability Borel space. We say that $T$ has the \emph{joining primeness property of order} $n\geq1$
(JP($n$)) if for any $k\geq n+1$ and any weakly mixing automorphisms $S_i\colon (Y_i,\mathcal{C}_i,\nu_i)\to (Y_i,\mathcal{C}_i,\nu_i)$, $1\leq i\leq k$, for every $\lambda\in J^{e}\left(T,S_{1}\times\ldots\times S_{k}\right)$ there exist $i_{1},\dots,i_{n}$, $1\leq i_{j}\leq k$ such that
\begin{equation*}
\lambda=\lambda_{X,Y_{i_1},\dots,Y_{i_n}}\otimes \otimes_{j\not\in \{i_1,\dots,i_n\}}\nu_j,
\end{equation*}
where $\lambda_{X,Y_{i_1},\dots,Y_{i_n}}$ is the projection of $\lambda$ on the product of the corresponding coordinate factors.
\end{df}

Speaking less formally, the JP($n$) property means that ergodic joinings with Cartesian products of weakly mixing automorphisms are in fact independent extensions of joinings with products of at most $n$ factors.

\begin{uw}(\cite{MR2729082})
Adding the requirement that $S_1,\dots,S_k$ are isomorphic yields an equivalent notion. Also restricting the definition by fixing $k=n+1$ (instead of taking an arbitrary $k\geq n+1$) brings nothing new.
\end{uw}

\begin{uw}
Clearly JP($m$)$\subset$JP($n$) for $m\leq n$.
\end{uw}

\section{General results}
\subsection{CS($n$) property vs spectral multiplicities of tensor products on some subspaces}\label{sekcja3}
Our results from this section overlap with the results from~\cite{MR2604539} (e.g.~Proposition~\ref{krot} follows from the ``calculus of spectral multiplicities'' proposed in~\cite{MR2604539}). However, for the sake of completeness and for the consistency of the language we have decided to include the overlaping results as well. Ryzhikov also provides in~\cite{MR2604539} new examples of spectral multiplicities, including e.g. $\{p, q, pq\}$, $\{p, q, r, pq, pr, rq, pqr\}$ (cf. Remark~\ref{robinson}). Recall that the problem to determine which of the subsets of $\mathbb{N}$ can be obtained as the sets of essential values of the spectral multiplicity function remains open.

For $n\geq 1$, we denote by $C_n$ the map from $\mathbb{T}^{n}$ to $\mathbb{T}$ given by
\begin{equation*}
C_n(z_1,\dots,z_n)=z_1 \cdot \ldots \cdot z_n.
\end{equation*}
So, if $U$ is a unitary operator of a separable Hilbert space, $\sigma_{U^{\otimes n}}=(\sigma_U)^{\ast n}=(C_n)_{\ast}(\sigma_U^{\otimes n})$. Throughout the paper we will use the following well-known characterization.

\begin{pr}\label{tw42}
Let $\sigma$ be a finite positive Borel measure on $\mathbb{T}$. The operator $V_{\sigma}^{\otimes n}$ has homogeneous spectrum of multiplicity $N$ if and only if the map $C_n\colon \mathbb{T}^n\to\mathbb{T}$ is $N$-to-one on some Borel set $F\subset \mathbb{T}^n$ with $\sigma^{\otimes n}(F^c)=0$.
\end{pr}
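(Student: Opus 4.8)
The plan is to recognise $V_{\sigma}^{\otimes n}$ as a multiplication operator and then apply the classical description of the multiplicity function of such an operator via disintegration of the underlying measure. First I would identify $L^2(\mathbb{T},\sigma)^{\otimes n}$ with $L^2(\mathbb{T}^n,\sigma^{\otimes n})$; under this identification $V_{\sigma}^{\otimes n}$ sends $f_1\otimes\dots\otimes f_n$ to $(z_1f_1)\otimes\dots\otimes(z_nf_n)$, i.e.\ it becomes the operator $M_{C_n}$ of multiplication by $C_n(z_1,\dots,z_n)=z_1\cdots z_n$, whose maximal spectral type is $(C_n)_{\ast}(\sigma^{\otimes n})=\sigma^{\ast n}$ as recalled just above. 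Then I would disintegrate $\sigma^{\otimes n}$ over $C_n$, writing $\sigma^{\otimes n}=\int_{\mathbb{T}}\mu_w\,d\sigma^{\ast n}(w)$ with $\mu_w$ a probability measure carried by the fibre $C_n^{-1}(w)$ for $\sigma^{\ast n}$-a.e.\ $w$, and use the corresponding direct integral decomposition $L^2(\mathbb{T}^n,\sigma^{\otimes n})=\int^{\oplus}_{\mathbb{T}}L^2(C_n^{-1}(w),\mu_w)\,d\sigma^{\ast n}(w)$ on which $M_{C_n}$ acts on the $w$-th fibre as the scalar $w$. By the standard multiplicity theory of such decomposable operators the multiplicity function of $V_{\sigma}^{\otimes n}$ coincides $\sigma^{\ast n}$-a.e.\ with $w\mapsto\dim L^2(C_n^{-1}(w),\mu_w)$, that is, with the number of atoms of $\mu_w$ (taken to be $\infty$ when $\mu_w$ is not purely atomic with finitely many atoms). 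Consequently $V_{\sigma}^{\otimes n}$ has homogeneous spectrum of multiplicity $N$ precisely when $\mu_w$ is supported on exactly $N$ points for $\sigma^{\ast n}$-a.e.\ $w$.

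It then remains to match this last condition with the existence of a full-measure Borel set on which $C_n$ is $N$-to-one. For the easier implication, given such an $F$, the identity $0=\sigma^{\otimes n}(F^c)=\int_{\mathbb{T}}\mu_w(F^c)\,d\sigma^{\ast n}(w)$ forces $\mu_w(F^c)=0$ for a.e.\ $w$, so $\mu_w$ is carried by the $N$-point set $F\cap C_n^{-1}(w)$ and hence has at most $N$ atoms; to get exactly $N$ I would note that, after discarding a $\sigma^{\otimes n}$-null set, $F$ may be replaced by $\{x\in F:\mu_{C_n(x)}(\{x\})>0\}$, on whose fibres $\mu_w$ then has full support. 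For the converse, assuming $\mu_w$ has exactly $N$ atoms for $w$ in a full-measure Borel set $W_0$, the ``atom graph'' $\Gamma=\{(w,x):w\in W_0,\ \mu_w(\{x\})>0\}$ is a Borel subset of $\mathbb{T}\times\mathbb{T}^n$ with all sections of cardinality $N$, and any such $x$ satisfies $C_n(x)=w$; by the Lusin--Novikov theorem $\Gamma$ decomposes into the graphs of $N$ Borel maps $g_1,\dots,g_N\colon W_0\to\mathbb{T}^n$, each injective since $C_n\circ g_i=\mathrm{id}$. Setting $F=\bigcup_{i=1}^{N}g_i(W_0)$, extended over the null set $W_0^c$ by an arbitrary measurable choice of $N$ points in each fibre, yields a Borel set on which $C_n$ is $N$-to-one, with $\sigma^{\otimes n}(F^c)=\int_{W_0}\mu_w(F^c)\,d\sigma^{\ast n}(w)=0$ since $F$ contains every atom of $\mu_w$ for $w\in W_0$.

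The hard part will be the measurable-selection step: one needs that $w\mapsto\mu_w$ is a Borel map into the space of probability measures, that the atom graph $\Gamma$ is Borel with finite sections, and then a uniformisation result (Lusin--Novikov, or Jankov--von Neumann) to list the atoms as measurable functions of $w$. The only other point requiring attention is the identification of the multiplicity function with the fibre dimensions $w\mapsto\dim L^2(C_n^{-1}(w),\mu_w)$, which is the classical structure theorem for multiplication operators and can be quoted; beyond these two ingredients the argument is just the disintegration identity combined with Fubini's theorem.
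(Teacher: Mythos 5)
Your route --- viewing $V_\sigma^{\otimes n}$ as multiplication by $C_n$ on $L^2(\mathbb{T}^n,\sigma^{\otimes n})$, disintegrating $\sigma^{\otimes n}=\int\mu_w\,d\sigma^{\ast n}(w)$, identifying the multiplicity function with $w\mapsto\dim L^2(\mu_w)$, and converting between ``$N$ atoms a.e.'' and an $N$-to-one set by Lusin--Novikov uniformisation of the atom graph --- is exactly the one the paper has in mind: Proposition~\ref{tw42} is quoted there as well known, without proof, and the remark following it records precisely the disintegration reformulation you start from. Your ``only if'' direction (homogeneous multiplicity $N$ implies the existence of $F$) is sound modulo the standard measurability points you flag, and this is in fact the only direction the paper uses (in the proof of Theorem~\ref{gltw}).

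There is, however, a genuine gap in your converse direction, at the sentence ``to get exactly $N$\dots''. Replacing $F$ by $F'=\{x\in F:\mu_{C_n(x)}(\{x\})>0\}$ shows only that a.e.\ $\mu_w$ is purely atomic with \emph{at most} $N$ atoms; it does not show it has exactly $N$, so homogeneity of the multiplicity is not established. Moreover, no argument can extract ``exactly $N$'' from the hypothesis as you read it, because a full-measure $F$ may contain $\sigma^{\otimes n}$-null padding in the fibres: take $\sigma=\delta_1$, $n=2$, $F=\{(1,1),(i,-i)\}$; then $\sigma^{\otimes 2}(F^c)=0$ and $C_2$ is two-to-one on $F$, while $V_\sigma^{\otimes 2}$ has simple spectrum (for continuous $\sigma$ one can likewise inflate the fibres of a full-measure set by a measurable choice of null points). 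From the mere existence of such an $F$ you may only conclude that the multiplicity is a.e.\ at most $N$. The intended reading --- the one in the paper's remark, and the one your set $F'$ actually realises --- is that every point of $F$ is charged by the conditional measure of its fibre, equivalently that a.e.\ $\mu_w$ has exactly $N$ atoms; either build that normalisation of $F$ into the statement of the direction you prove, or weaken its conclusion to ``multiplicity at most $N$ a.e.''.
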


\begin{uw}
The same property can be also expressed in terms of disintegration of measures into conditional measures, as it was done in~\cite{MR2186251} in the case of two operators with simple spectrum (whose product has not necessarily a homogeneous spectrum):
\newline
\emph{
The operator $V_\sigma^{\otimes n}$ has homogeneous spectrum of multiplicity $N$ if and only if in the disintegration
\begin{equation*}
\sigma^{\otimes n}=\int_{\mathbb{T}} \mu_z\ d \sigma^{\ast n}(z)
\end{equation*}
for $\sigma^{\ast n}$-almost every $z$, the measure $\mu_z$ (which is concentrated on $C_n^{-1}(z)$) is purely atomic and has $N$ atoms.
}
\end{uw}

\begin{uw} \label{uproste}
For a unitary operator $U \colon H \to H$ the condition that $U^{\odot k}$ has simple spectrum for some $k\geq 1$ implies that $U$ has simple spectrum too. Indeed, otherwise we can find $x_1, x_2 \in H$ with same spectral measures $\sigma_{x_1}= \sigma_{x_2}$ and such that $\mathbb{Z}(x_1) \perp \mathbb{Z}(x_2)$. Then $x_1^{\otimes k}, x_2^{\otimes k} \in H^{\odot k}$, $\mathbb{Z}(x_1^{\otimes k}) \perp \mathbb{Z}(x_2^{\otimes k})$ and $\sigma_{x_1^{\otimes k}}= \sigma_{x_2^{\otimes k}}$, which implies that spectrum of $U^{\odot k}$ is not simple either.
\end{uw}

Let $I=\{ \vec{i}=(i_{1},\ldots,i_{n})\colon \{i_1,\dots,i_n\}=\{1,2,\dots,n\} \}$. Let $G$ be a subgroup of the group $S(n)$, acting naturally on $I$ by
\begin{equation*}
\pi((i_{1},\dots,i_{n}))=(\pi(i_{1}),\ldots,\pi(i_{n}))
\end{equation*}
for $\pi \in G$. 
Denote by $o_G$ the number of orbits of this action of $G$ on $I$. Since every orbit has $\# G$ elements, we have $o_G=\frac{n!}{\# G}$. Now we consider the space 
$$
H^{\otimes n}_{inv}(G)=\left\{\widetilde{x}\in H^{\otimes n}\colon U_{\pi}(\widetilde{x})=\widetilde{x} \text{ for }\pi \in G   \right\}.
$$
\begin{uw}
Let $n\geq 1$. If $\sigma=\sigma_x$ is the maximal spectral type of the unitary operator $U\colon H\to H$ then the spectral measure of $(x,\dots,x)$ under $U^{\otimes n}$ is  $\sigma_x^{\ast n}$, so it is the maximal spectral type of $U^{\otimes n}$. Since $(x,\dots, x)\in H^{\otimes n}_{inv}(G)$ for any subgroup $G\subset S(n)$, the maximal spectral type of  $U^{\otimes n}|_{H^{\otimes n}_{inv}(G)}$ is also $\sigma^{\ast n}$. In particular $\sigma_{U^{\odot n}}=\sigma^{\ast n}$.

\end{uw}
\begin{uw}\label{rzut}
Notice that the orthogonal projection on $H^{\otimes n}_{inv}(G)$ is given by
\begin{equation*}
proj_{H^{\otimes n}_{inv}(G)}=\frac{1}{\# G}\sum_{\pi \in G}U_{\pi}.
\end{equation*}
\end{uw}

The spectral decompositions of the operators $U^{\odot n}$ and $U^{\otimes n}|_{H^{\otimes n}_{inv}(G)}$ are connected with each other. In case when the spectrum of $U^{\odot n}$ is simple and the maximal spectral type is continuous, we obtain the following characterization.
\begin{pr}\label{sigmaperm}
Let $U\colon H\to H$ be a unitary operator of a separable Hilbert space. Assume that $\sigma=\sigma_U$ is a continuous measure and that $U^{\odot n}$ has simple spectrum. Then $U^{\otimes n}|_{\Hinv}$ has homogeneous spectrum of multiplicity $o_G$.
\end{pr}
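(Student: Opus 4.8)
\emph{Proof idea.} First I would pass to a concrete model: since $U^{\odot n}$ has simple spectrum, Remark~\ref{uproste} gives that $U$ has simple spectrum, so we may assume $H=L^2(\mathbb{T},\sigma)$ with $U=V_\sigma$. Then $H^{\otimes n}=L^2(\mathbb{T}^n,\sigma^{\otimes n})$, the operator $U^{\otimes n}$ is multiplication by $C_n$, and each $U_\pi$ is the unitary induced by permuting coordinates of $\mathbb{T}^n$; the maximal spectral type of $U^{\otimes n}|_{\Hinv}$ is already $\sigma^{\ast n}$ (as noted in one of the preceding remarks), so only the multiplicity function has to be found. Since $\sigma$ is continuous, $\sigma\otimes\sigma$ gives no mass to the diagonal, so $\sigma^{\otimes n}$ is concentrated on the Borel set $D\subseteq\mathbb{T}^n$ of points with pairwise distinct coordinates, and on $D$ the action of $S(n)$, and of each of its subgroups, is free.

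The plan is then to decompose everything over the fibres of $C_n$. I would disintegrate $\sigma^{\otimes n}=\int_{\mathbb{T}}\mu_z\,d\sigma^{\ast n}(z)$ with $\mu_z$ concentrated on $C_n^{-1}(z)$ (the disintegration used in the remark after Proposition~\ref{tw42}); by uniqueness of the disintegration and the $S(n)$-invariance of $\sigma^{\otimes n}$, the $\mu_z$ are $S(n)$-invariant and concentrated on $D$ for $\sigma^{\ast n}$-a.e.\ $z$. This gives a direct integral decomposition $L^2(\mathbb{T}^n,\sigma^{\otimes n})=\int_{\mathbb{T}}^{\oplus}L^2(C_n^{-1}(z),\mu_z)\,d\sigma^{\ast n}(z)$ that diagonalizes $U^{\otimes n}$ (acting as the scalar $z$ on the $z$-th fibre), and since every $U_\pi$ acts inside the fibres it restricts to direct integral decompositions of $H^{\odot n}$ and of $\Hinv$ with fibres $L^2(C_n^{-1}(z),\mu_z)^{S(n)}$ and $L^2(C_n^{-1}(z),\mu_z)^{G}$ respectively. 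Consequently the spectral multiplicity of $U^{\otimes n}|_{\Hinv}$ at $z$ is $\dim L^2(C_n^{-1}(z),\mu_z)^{G}$ for a.e.\ $z$, and likewise for $H^{\odot n}$ with $S(n)$ in place of $G$.

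Next I would use the hypothesis. Simplicity of $U^{\odot n}$ means $\dim L^2(C_n^{-1}(z),\mu_z)^{S(n)}=1$ for a.e.\ $z$. Because $S(n)$ acts freely on $D$ and $\mu_z$ is concentrated on $D$, a non-trivial continuous part of $\mu_z$ would already make the space of $S(n)$-invariant functions infinite dimensional; hence $\mu_z$ is purely atomic, its atoms split into full $S(n)$-orbits, an invariant function is constant on each orbit, and so the number of orbits equals $1$. Thus for a.e.\ $z$ the measure $\mu_z$ sits on a single $S(n)$-orbit, i.e.\ on exactly $n!$ points permuted simply transitively by $S(n)$. Restricting this action to $G$, each $G$-orbit has $\# G$ points, so there are $n!/\# G=o_G$ of them, and since $G$-invariant functions are exactly those constant on $G$-orbits we get $\dim L^2(C_n^{-1}(z),\mu_z)^{G}=o_G$ for a.e.\ $z$. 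By the previous paragraph $U^{\otimes n}|_{\Hinv}$ then has spectral multiplicity identically $o_G$ with respect to $\sigma^{\ast n}$, that is, homogeneous spectrum of multiplicity $o_G$.

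The step I expect to be the main obstacle is making the fibred picture rigorous — the measurability of $z\mapsto\mu_z$ and of the fibre dimensions, and the fact that the multiplicity of the restriction of a decomposable operator to the $G$- (or $S(n)$-) invariant subspace is recovered a.e.\ from $\dim L^2(\mu_z)^{G}$; this is standard but must be arranged carefully. One can instead avoid direct integrals: from simplicity of $U^{\odot n}$ and continuity of $\sigma$ one shows, exactly as above, that $C_n$ is $n!$-to-one on an $S(n)$-invariant, diagonal-free Borel set $F$ with $\sigma^{\otimes n}(F^{c})=0$; a Borel fundamental domain $F_0\subseteq F$ for the $G$-action then identifies $\Hinv$, up to a harmless scalar on the inner product, with $L^2(F_0,\sigma^{\otimes n}|_{F_0})$, on which $U^{\otimes n}|_{\Hinv}$ is multiplication by $C_n|_{F_0}$, an $o_G$-to-one map, and the measure-space version of the characterization in Proposition~\ref{tw42} finishes the argument.
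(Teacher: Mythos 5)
Your argument is correct, and its main route is genuinely different from the paper's. The paper also reduces to $U=V_\sigma$ via Remark~\ref{uproste}, but then works with an explicit orthogonal decomposition of $L^2(\mathbb{T}^n,\sigma^{\otimes n})$ into the ``ordering cells'' $A_{\vec{i}}=\{z_{i_1}<\dots<z_{i_n}\}$ (which exhaust $\mathbb{T}^n$ modulo $\sigma^{\otimes n}$-null sets because $\sigma$ is continuous), shows that the projections of the corresponding subspaces $F_{\vec{i}}$ onto $\Hinv$ coincide along $G$-orbits of $\vec{i}$ and are orthogonal across different orbits, and builds explicit unitary equivalences $\1_{A_{\vec{i}}}f\mapsto\sqrt{\#G}\,proj_{\Hinv}f$; this yields $U^{\otimes n}|_{\Hinv}\simeq\bigoplus_{k=1}^{o_G}U^{\otimes n}|_{F_{(1,\dots,n)}}$, and the case $G=S(n)$ identifies the single summand with $U^{\odot n}$, so simplicity of $U^{\odot n}$ finishes the proof. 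You instead disintegrate $\sigma^{\otimes n}$ over the fibres of $C_n$, note that continuity of $\sigma$ puts each $\mu_z$ on the diagonal-free set where $S(n)$ acts freely, translate simplicity of $U^{\odot n}$ into $\dim L^2(\mu_z)^{S(n)}=1$ a.e., conclude that $\mu_z$ is carried by a single free $S(n)$-orbit of $n!$ points, and count $G$-orbits to get fibre dimension $o_G$ for $\Hinv$. Your route is conceptually transparent (multiplicity equals the number of $G$-orbits in a generic fibre) and matches the disintegration characterization recorded after Proposition~\ref{tw42}, but it leans on direct-integral/Hahn--Hellinger machinery and the measurability bookkeeping you yourself flag, whereas the paper's hands-on decomposition avoids that machinery entirely and, importantly, delivers the explicit isomorphism $U^{\otimes n}|_{\Hinv}\simeq\bigoplus_{k=1}^{o_G}U^{\odot n}$ (formulas~\eqref{333}--\eqref{334}), which is reused later (e.g.\ in Remark~\ref{uw:nowa} and the corollary on multiplicity $n!$); your fibred picture gives the multiplicity statement directly but would need a little extra work to recover that explicit equivalence. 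Your sketched alternative via an $S(n)$-invariant full-measure set on which $C_n$ is $n!$-to-one and a Borel fundamental domain for $G$ is essentially the paper's argument in different clothing ($A_{(1,\dots,n)}$ is exactly such a fundamental domain), so both of your endings are sound.
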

A similar theorem can be found in Ageev~\cite{MR2465595}. Even though our situation is simpler (in~\cite{MR2465595} the measure has one atom, whereas here it is continuous), we include the proof for the sake of completeness.

\begin{proof}
Since by Remark~\ref{uproste} the operator $U$ has simple spectrum, we can assume that $U=V_{\sigma_U}$, that is
\begin{equation*}
H=L^2(\mathbb{T},\sigma_U) \textrm{ and } Uf(z)=zf(z).
\end{equation*}
We will use the ordering on $\mathbb{T}$ inherited from principal values of arguments of complex numbers. Let
\begin{equation*}
A_{\vec{i}}=\left\{(z_1,\ldots,z_n)\in \mathbb{T}^n;\ z_{i_1}<\ldots < z_{i_n}\right\} \text{ and }F_{\vec{i}}=\1_{A_{\vec{i}}}L^2\left(\mathbb{T}^n,\sigma_U^{\otimes n}\right)
\end{equation*}
for $\vec{i} \in I$.
The sets $A_{\vec{i}}$ are pairwise disjoint and we have
\begin{equation}\label{eq:pepepe}
F_{\vec{i}}\perp F_{\vec{j}}\text{ for }\vec{i} \neq \vec{j}.
\end{equation}
By continuity of $\sigma$, $\mathbb{T}^n$ can be decomposed up to a set of measure zero into the disjoint union $\bigcup_{\vec{i}\in I}A_{\vec{i}}$. Therefore $H^{\otimes n}=\bigoplus_{\vec{i}\in I} F_{\vec{i}}.$
We obtain
\begin{equation*}
\Hinv=proj_{\Hinv}H^{\otimes n}=proj_{\Hinv}\bigoplus_{\vec{i}\in I} F_{\vec{i}}.
\end{equation*}
Notice that
\begin{equation}\label{pr}
U_{\pi}(F_{\vec{i}}) = F_{\pi^{-1}(\vec{i})} \text{ for }\pi \in S(n) \text{ and }\vec{i} \in I.
\end{equation}
If $\vec{j}=\tau(\vec{i})$ for some $\tau \in G$, by Remark~\ref{rzut}
\begin{equation*}
proj_{\Hinv} F_{\vec{i}} = proj_{\Hinv} F_{\vec{j}}.
\end{equation*}
If $\vec{j}$ is not in the $G$-orbit of $\vec{i}$, then for any $\pi,\ \pi ' \in G$ we have $\pi (\vec{i}) \neq \pi ' (\vec{j}),$ and hence the sets $A_{\pi^{-1}(\vec{i})} \textrm{ and } A_{\pi'^{-1} (\vec{j})}$ are disjoint and $F_{\pi^{-1} (\vec{i})} \perp F_{\pi'^{-1} (\vec{j})}.$ It follows by Remark~\ref{rzut} that
\begin{equation*}
proj_{\Hinv} F_{\vec{i}} \perp proj_{\Hinv} F_{\vec{j}}.
\end{equation*}
This means that for some $\vec{i}_1,\dots, \vec{i}_{o_G}$ we have
\begin{equation*}
\Hinv=\bigoplus_{k=1}^{o_G}proj_{\Hinv}F_{\vec{i_k}}.
\end{equation*}

Now,
\begin{equation*}
U^{\otimes n}\vert _{F_{\vec{i}}} \simeq U^{\otimes n}\vert _{proj_{H_{inv}^{\otimes n}(G)}{F_{\vec{i}}}}.
\end{equation*}
Indeed, the unitary equivalence is given by
\begin{equation*}
\1_{A_{\vec{i}}}f \mapsto \sqrt{\# G}\ proj_{H_{inv}^{\otimes n}(G)}f.
\end{equation*}
The isometricity condition follows by Remark \ref{rzut}, by \eqref{pr},~\eqref{eq:pepepe} and by the fact that $U_{\pi}$ is an isometry of  $L^2(\mathbb{T}^n,\sigma_{U}^{\otimes n})$. Moreover, $proj_{H^{\otimes n}_{inv}(G)} \circ U^{\otimes n} = U^{\otimes n} \circ proj_{H^{\otimes n}_{inv}(G)}$\footnote{We have $proj_{F}\circ W = W \circ proj_F$ for any bounded linear operator $W$ on a Hilbert space $H$ and any $W$- and $W^\ast$-invariant subspace $F\subset H$.}.
Hence
\begin{equation*}
U^{\otimes n}|_{\Hinv}\simeq U^{\otimes n}|_{\bigoplus_{k=1}^{o_G}F_{\vec{i_k}}}.
\end{equation*}
Moreover, for any $\vec{i},\vec{j} \in I$ it holds $U^{\otimes n}\vert _{F_{\vec{i}}} \simeq U^{\otimes n}\vert _{F_{\vec{j}}}$ (the isomorphism is given by $U_{\pi}$ for the appropriate $\pi \in S(n)$).
Therefore
\begin{equation}\label{333}
U^{\otimes n}|_{\Hinv}\simeq \bigoplus_{k=1}^{o_G} U^{\otimes n}|_{F_{(1,\ldots,n)}}.
\end{equation}
To complete the proof, it suffices to show that $U^{\otimes n}|_{F_{(1,\dots,n)}}$ has simple spectrum. This is however true since the condition~\eqref{333} for $G=S(n)$ means that
\begin{equation}\label{334}
U^{\odot n}\simeq U^{\otimes n}|_{F_{(1,\dots,n)}}.
\end{equation}
\end{proof}

\begin{wn}
Let $n\geq 1$. If the maximal spectral type of $U$ is continuous then $U^{\otimes n}$ has homogeneous spectrum of multiplicity $n!$ if and only if $U^{\odot n}$ has simple spectrum.
\end{wn}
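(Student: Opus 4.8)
The plan is to reduce both implications to a single structural statement: if $\sigma=\sigma_U$ is continuous, then $U^{\otimes n}$ is unitarily equivalent to an orthogonal sum of $n!$ copies of $U^{\odot n}$. Since the spectral multiplicity function is additive over orthogonal direct sums and $\sigma_{U^{\otimes n}}=\sigma_{U^{\odot n}}=\sigma^{\ast n}$, this statement immediately gives the equivalence claimed in the corollary: $U^{\otimes n}$ has homogeneous multiplicity $n!$ (a.e.\ with respect to $\sigma^{\ast n}$) precisely when $M_{U^{\odot n}}=1$ a.e., i.e.\ when $U^{\odot n}$ has simple spectrum. The implication ``$U^{\odot n}$ simple $\Rightarrow$ $U^{\otimes n}$ homogeneous of multiplicity $n!$'' is in any case nothing but Proposition~\ref{sigmaperm} applied to the trivial subgroup $G=\{\mathrm{id}\}$, where $H^{\otimes n}_{inv}(G)=H^{\otimes n}$ and $o_G=n!$; so only the structural statement really requires an argument.

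The first ingredient is the observation that, applied directly to an operator $V_\tau$ with $\tau$ continuous, the part of the proof of Proposition~\ref{sigmaperm} that establishes \eqref{333} and \eqref{334} uses only the continuity of $\tau$ and not the simplicity of any symmetric power: the sector decomposition $L^2(\mathbb{T}^n,\tau^{\otimes n})=\bigoplus_{\vec i\in I}\1_{A_{\vec i}}L^2$ into $\#I=n!$ mutually equivalent, $V_\tau^{\otimes n}$-invariant pieces, each of which is equivalent to $V_\tau^{\odot n}$ by \eqref{334}, already yields $V_\tau^{\otimes n}\simeq(V_\tau^{\odot n})^{\oplus n!}$, and hence $M_{V_\tau^{\otimes n}}\geq n!$ a.e.\ with respect to $\tau^{\ast n}$.

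With this in hand, I would show that the hypothesis of the nontrivial implication forces $U$ to have simple spectrum. If it does not, there exist orthogonal cyclic vectors $x_1,x_2\in H$ with a common spectral measure $\tau$, and $\tau\neq 0$ is continuous because $\tau\ll\sigma$. On the $U^{\otimes n}$-invariant subspace $(\mathbb{Z}(x_1)\oplus\mathbb{Z}(x_2))^{\otimes n}$ the operator $U^{\otimes n}$ is equivalent to $(V_\tau\oplus V_\tau)^{\otimes n}\simeq(V_\tau^{\otimes n})^{\oplus 2^n}\simeq(V_\tau^{\odot n})^{\oplus 2^n n!}$, whose multiplicity is at least $2^n n!>n!$ on the support of $\tau^{\ast n}$ --- a set of positive $\sigma^{\ast n}$-measure since $\tau^{\ast n}\ll\sigma^{\ast n}$ and $\tau^{\ast n}\neq 0$. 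As $M_{U^{\otimes n}}$ dominates the multiplicity of any invariant subrestriction, this contradicts the assumed homogeneous multiplicity $n!$, so $U$ is simple and we may take $U=V_\sigma$.

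Once $U=V_\sigma$, the structural statement is exactly what the proof of Proposition~\ref{sigmaperm} produces: from $H^{\otimes n}=\bigoplus_{\vec i\in I}F_{\vec i}$, the mutual equivalence of the $F_{\vec i}$, and \eqref{334}, one gets $U^{\otimes n}\simeq(U^{\otimes n}|_{F_{(1,\dots,n)}})^{\oplus n!}\simeq(U^{\odot n})^{\oplus n!}$, and the corollary follows. I expect the reduction to the simple case to be the only genuine obstacle: one must pit the homogeneity of the multiplicity of $U^{\otimes n}$ against the multiplicity blow-up caused by a hypothetical second cyclic component of $U$, and for this the bound $M_{V_\tau^{\otimes n}}\geq n!$ --- equivalently $V_\tau^{\otimes n}\simeq(V_\tau^{\odot n})^{\oplus n!}$ --- is what one needs; the cruder bound $M_{V_\tau^{\otimes n}}\geq 1$ would only settle $n\leq 3$.
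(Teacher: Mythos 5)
Your proof is correct and follows essentially the same route as the paper: the forward implication is Proposition~\ref{sigmaperm} with $G=\{Id\}$, and the reverse one rests on the observation that \eqref{333} and \eqref{334} need only continuity of the measure, combined with a case split on whether $U$ itself has simple spectrum, the non-simple case being handled via two orthogonal cyclic subspaces with equal spectral measure. The only (immaterial) differences are that you argue the contrapositive in the opposite direction and extract the multiplicity blow-up $2^{n}\,n!$ from $(\mathbb{Z}(x_1)\oplus\mathbb{Z}(x_2))^{\otimes n}$, where the paper settles for $2\cdot n!$ from $\mathbb{Z}(x_1)^{\otimes n}\oplus\mathbb{Z}(x_2)^{\otimes n}$.
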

\begin{proof}
Fix $n\geq 1$. If the operator $U^{\odot n}$ has simple spectrum then applying Proposition~\ref{sigmaperm} to $G=\{Id\}$ we see that $U^{\otimes n}$ has homogeneous spectrum of multiplicity $n!$

Now assume that $U^{\odot n}$ does not have simple spectrum. First we consider the case when $U$ itself has simple spectrum. By conditions~\eqref{334} and~\eqref{333} from Proposition~\ref{sigmaperm} we have
$$
U^{\otimes n}\simeq \bigoplus_{k=1}^{n!} U^{\otimes n}|_{F_{(1,\dots,n)}}\simeq \bigoplus_{k=1}^{n!}U^{\odot n}.
$$
Therefore the spectral multiplicity of $U^{\otimes n}$ is at least equal to $2n!$. Now if $U$ does not have simple spectrum then $H \supset \mathbb{Z}(x_1)\oplus \mathbb{Z}(x_2)$ for some $x_1,x_2\in H$ with $\sigma_{x_1}= \sigma_{x_2}$. Therefore $H^{\otimes n}\supset \mathbb{Z}(x_1)^{\otimes n}\oplus \mathbb{Z}(x_2)^{\otimes n}$ and $H^{\odot n}\supset \mathbb{Z}(x_1)^{\odot n}\oplus \mathbb{Z}(x_2)^{\odot n}$. Using the same arguments as before, the spectral multiplicity is thus again at least equal to $2n!$
\end{proof}

\begin{uw}\label{uw:nowa}
Let $n\geq 1$ and assume that $U$ has simple spectrum and its maximal spectral type is continuous. As in the above corollary, it follows from conditions~\eqref{334} and~\eqref{333} in Proposition~\ref{sigmaperm} that 
\begin{equation}\label{ryz}
U^{\otimes n}\simeq  \bigoplus_{k=1}^{n!}U^{\odot n}.
\end{equation}
Therefore $U^{\odot n}$ has homogeneous spectrum of multiplicity $k$ for some $k\geq 1$ if and only if $U^{\otimes k}$ has homogeneous spectrum of multiplicity $n!\cdot k$. As noted in~\cite{MR2604539}, formula~\eqref{ryz} remains true also in the general case, i.e. without assuming simplicity of spectrum of $U$.
\end{uw}

\begin{lm} \label{vproste}
If for some $k\geq 1$ the operator $U^{\odot k}$ has simple spectrum then also for $1\leq j\leq k-1$ the operators $U^{\odot j}$ have simple spectrum.
\end{lm}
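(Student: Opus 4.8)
The plan is to prove the contrapositive in the equivalent form: assuming $U^{\odot k}$ has simple spectrum, I show that each $U^{\odot j}$ with $1\le j\le k-1$ has simple spectrum. Two facts will be used throughout. First, the restriction of a simple-spectrum unitary operator to any (not necessarily reducing) invariant subspace again has simple spectrum: two orthogonal cyclic subspaces with equal spectral measures lying inside the subspace also lie inside the whole space. Second, by Remark~\ref{uproste}, once $U^{\odot k}$ is simple, $U$ itself is simple. The argument then splits according to whether $\sigma:=\sigma_U$ has an atom.

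Suppose first that $\sigma$ has an atom at some $\lambda\in\mathbb{T}$, so $U$ has a unit eigenvector $b$ with $Ub=\lambda b$. Consider the creation operator $a^\ast(b)$ (equivalently, a level-dependent multiple of $\xi\mapsto P_{H^{\odot(n+1)}}(\xi\otimes b)$): it maps $H^{\odot n}$ into $H^{\odot(n+1)}$, is bounded below by $\|b\|$ (by the canonical commutation relations one has $\|a^\ast(b)\xi\|^2=\|a(b)\xi\|^2+\|b\|^2\|\xi\|^2$), and satisfies $F_{sym}(U)\,a^\ast(b)=\lambda\,a^\ast(b)\,F_{sym}(U)$ because $F_{sym}(U)a^\ast(b)F_{sym}(U)^{-1}=a^\ast(Ub)=\lambda a^\ast(b)$. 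Hence $T:=(a^\ast(b))^{k-j}$ is an injective map $H^{\odot j}\to H^{\odot k}$ with closed range $R$ and $U^{\odot k}T=\lambda^{k-j}TU^{\odot j}$. From this identity one gets $T^\ast T=(U^{\odot j})^\ast(T^\ast T)U^{\odot j}$, so in the polar decomposition $T=W|T|$ the positive invertible factor $|T|$ commutes with $U^{\odot j}$, and the unitary $W\colon H^{\odot j}\to R$ gives $W^{-1}(U^{\odot k}|_R)W=\lambda^{k-j}U^{\odot j}$. Thus $U^{\odot j}$ is, up to a rotation of its spectral measure, a restriction of $U^{\odot k}$ to an invariant subspace, and the first fact above shows $U^{\odot j}$ is simple. (Note this case does not use Remark~\ref{uproste}.)

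Suppose now that $\sigma$ is continuous. By Remark~\ref{uproste} we may take $U=V_\sigma$, so $U^{\otimes n}=V_\sigma^{\otimes n}$, and by the corollary following Proposition~\ref{sigmaperm} combined with Proposition~\ref{tw42}, $U^{\odot n}$ has simple spectrum if and only if $C_n$ is $n!$-to-one on a Borel set of full $\sigma^{\otimes n}$-measure. Since $\sigma$ is continuous, $\sigma^{\otimes n}$-almost every point of $\mathbb{T}^n$ has pairwise distinct coordinates and hence at least $n!$ preimages under $C_n$ (its permutations); so non-simplicity of $U^{\odot j}$ means that, via the disintegration of $\sigma^{\otimes j}$ over $C_j$, there is a positive-measure family of pairs $(z,z')\in\mathbb{T}^j\times\mathbb{T}^j$ with $C_j(z)=C_j(z')$, all coordinates distinct, and $z'$ not a permutation of $z$. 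I then append $k-j$ extra coordinates: for $\sigma^{\otimes(k-j)}$-almost every $v\in\mathbb{T}^{k-j}$ the coordinates of $v$ are distinct from one another and from those of $z$ and of $z'$, so $(z,v)$ and $(z',v)$ lie in $\mathbb{T}^k$, have the same image under $C_k$, have all coordinates distinct, and are not permutations of one another. Because the marginals on $(z,v)$ and on $(z',v)$ of the measure obtained after appending are absolutely continuous with respect to $\sigma^{\otimes k}$, any Borel set of full $\sigma^{\otimes k}$-measure captures almost all such configurations; hence $C_k$ has strictly more than $k!$ preimages on a positive-measure subset of $\mathbb{T}^k$, contradicting the simplicity of $U^{\odot k}$.

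The part requiring real care is the measure-theoretic bookkeeping in the continuous case: converting non-simplicity of $U^{\odot j}$ into a genuinely positive-measure family of non-permutation pairs over $C_j$ — which one reads off from two orthogonal cyclic subspaces of $H^{\odot j}$ of equal spectral type, realised by functions with disjoint $S(j)$-invariant supports, the equality of their spectral measures forcing the images of the two supports under $C_j$ to overlap in positive measure — and then checking by Fubini that appending generic coordinates both preserves positive measure and keeps the enlarged configurations inside a full-measure set witnessing the $k!$-to-one property. The atom case is comparatively soft once the creation operator and the remark on restrictions of simple-spectrum operators are in hand.
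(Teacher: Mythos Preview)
Your argument is correct, but more elaborate than the paper's. The paper treats all $\sigma$ at once via the characterization that $V_\sigma^{\odot n}$ is simple if and only if $C_n$ is one-to-one modulo coordinate permutations on some set of full $\sigma^{\otimes n}$-measure (no continuity of $\sigma$ is needed for this), and then proves only the single step $k\Rightarrow k-1$: given a full-measure $E\subset\mathbb{T}^k$, set $F=\{z\in\mathbb{T}^{k-1}:\sigma(\{y:(z,y)\in E\})=1\}$, choose a non-permutation pair $z,z'\in F$ with $C_{k-1}(z)=C_{k-1}(z')$, and pick a common last coordinate $y$ with $(z,y),(z',y)\in E$. Your continuous case is this same idea, run from $j$ to $k$ in one jump and packaged as a pair-measure with marginals $\ll\sigma^{\otimes j}$; that works, though the realisation you sketch via ``functions with disjoint $S(j)$-invariant supports'' is not quite the right mechanism (the clean construction is $\rho=\int(\mu_t\otimes\mu_t)|_{\text{off-diagonal}}\,d\sigma^{\ast j}(t)$ from the disintegration of $\sigma^{\otimes j}$ over $C_j$). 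Your atom case via creation operators is a genuinely different, operator-theoretic route that the paper does not use; one caution is that your stated ``first fact'' about restrictions to not-necessarily-reducing invariant subspaces is ill-posed (the restriction need not even be normal), but in your application the range $R$ is in fact reducing, since $U^{\odot k}|_R$ is unitary (being conjugate to $\lambda^{k-j}U^{\odot j}$), so the argument goes through.
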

\begin{proof}
By Remark~\ref{uproste}, we may assume that $U=V_\sigma$ where $\sigma=\sigma_U$, and we may moreover assume that $\sigma$ is a probability measure. We will show that, when $k\ge 2$, spectral simplicity of $V_{\sigma}^{\odot k}$ implies spectral simplicity for $V_{\sigma}^{\odot (k-1)}$.
Suppose that $V_{\sigma}^{\odot (k-1)}$ has not simple spectrum. This means that
\begin{equation}\label{ghgh}
\begin{array}{l}
\mbox{for any Borel $F \subset \mathbb{T}^{k-1}$, $\sigma^{\otimes (k-1)}(F)=1$
the map $C_{k-1}$}\\
\mbox{is not one-to-one modulo coordinate permutations on $F$.}
\end{array}
\end{equation}
Take any Borel set $E \subset \mathbb{T}^k$ such that $\sigma^{\otimes k}(E)=1$. We claim that $C_k$ is not one-to-one modulo coordinate permutations on $E$.
Let $F\subset \mathbb{T}^{k-1}$ be defined by
\begin{equation*}
F=\left\{(z_1,\dots,z_{k-1})\in \mathbb{T}^{k-1}\colon \sigma \left(\left\{ y\in\mathbb{T}\colon (z_1,\dots,z_{k-1},y)\in {E} \right\}\right)=1 \right\}.
\end{equation*}
Then $\sigma^{\otimes (k-1)}(F)=1$. By~\eqref{ghgh} it follows that there exist $(z_1,\dots,z_{k-1})$ and $(z'_{1},\dots,z'_{k-1})$ in $F$ with $C_{k-1}(z_1,\dots,z_{k-1})=C_{k-1}(z'_1,\dots,z'_{k-1})$ which cannot be obtained from one another by a coordinate permutation.

By the definition of $F$, we can find $z\in \mathbb{T}$ such that both $(z_1,\dots,z_{k-1},z)$ and $(z'_1,\dots,z'_{k-1},z)$ are in ${E}$. Clearly $C_k(z_1,\dots,z_{k-1},z)=C_k(z'_1,\dots,z'_{k-1},z)$ and these points are not either equal modulo coordinate permutations, which completes the proof.
\end{proof}

\begin{pr}\label{krot}
Let $k,m\geq 1$. Assume that $\sigma$ is a continuous measure on $\mathbb{T}$ and that the operator $V_{\sigma}^{\odot mk}$ has simple spectrum. Then the operator $(V_{\sigma^{\ast k}})^{\otimes m}$ has a homogeneous spectrum of multiplicity $\displaystyle{\frac{(mk)!}{(k!)^m}}$ with maximal spectral type $\sigma^{\ast mk}$.
\end{pr}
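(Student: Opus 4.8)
The plan is to apply the characterization of Proposition~\ref{tw42} to the operator $V_{\sigma^{\ast k}}$ with $n=m$, so that the whole statement reduces to counting fibres of the multiplication map $C_m\colon\mathbb T^m\to\mathbb T$; the link is the factorization $C_{mk}=C_m\circ P$, where $P=C_k\times\dots\times C_k\colon\mathbb T^{mk}\to\mathbb T^m$ under the identification $\mathbb T^{mk}=(\mathbb T^k)^m$. The statement about the maximal spectral type is immediate: $V_{\sigma^{\ast k}}$ has maximal spectral type $\sigma^{\ast k}$, so by the convolution formula for tensor products $(V_{\sigma^{\ast k}})^{\otimes m}$ has maximal spectral type $(\sigma^{\ast k})^{\ast m}=\sigma^{\ast mk}$; note also that $P_\ast\sigma^{\otimes mk}=(\sigma^{\ast k})^{\otimes m}$ because $P$ acts coordinatewise. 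We may assume that $\sigma$ is a probability measure.

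First I would feed in the hypothesis. Since $\sigma$ is continuous and $V_\sigma^{\odot mk}$ has simple spectrum, Proposition~\ref{sigmaperm} with $G=\{Id\}$ (so that $H^{\otimes mk}_{inv}(G)=H^{\otimes mk}$ and $o_G=(mk)!$) gives that $V_\sigma^{\otimes mk}$ has homogeneous spectrum of multiplicity $(mk)!$; by Proposition~\ref{tw42} there is then a Borel set $F\subseteq\mathbb T^{mk}$ with $\sigma^{\otimes mk}(F^c)=0$ on which $C_{mk}$ is $(mk)!$-to-one. I would then replace $F$ by a smaller conull set which is, in addition, invariant under coordinate permutations, has all coordinates pairwise distinct, and avoids every ``product coincidence'': for each pair of distinct subsets $B\ne B'$ of $\{1,\dots,mk\}$ of the same cardinality, the set $\{z\in\mathbb T^{mk}\colon\prod_{j\in B}z_j=\prod_{j\in B'}z_j\}$ is $\sigma^{\otimes mk}$-null, since after cancelling the indices in $B\cap B'$ the condition pins one remaining coordinate to a single value once the others are fixed (here continuity of $\sigma$ and Fubini apply), and there are only finitely many such pairs; invariance is then obtained by intersecting over $\bigcap_{\pi\in S(mk)}\pi F$, where $S(mk)$ acts on $\mathbb T^{mk}$ by permuting coordinates. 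After this $F$ is still conull, is $S(mk)$-invariant, lies off all diagonals, and contains no point admitting a coincidence $\prod_{j\in B}z_j=\prod_{j\in B'}z_j$ with $B\ne B'$, $|B|=|B'|$. As $C_{mk}$ is $S(mk)$-invariant, for $\sigma^{\ast mk}$-a.e.\ $t$ the fibre $F\cap C_{mk}^{-1}(t)$ is $S(mk)$-invariant, off-diagonal, nonempty and of cardinality at most $(mk)!$, hence equals exactly one $S(mk)$-orbit $S(mk)\cdot z(t)$.

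Now set $W=P(F)$. Since $P_\ast\sigma^{\otimes mk}=(\sigma^{\ast k})^{\otimes m}$ and $P^{-1}(W^c)\subseteq F^c$, the set $W$ is $(\sigma^{\ast k})^{\otimes m}$-conull (a priori only universally measurable), and I would replace it by a Borel conull subset. From $C_m\circ P=C_{mk}$ one gets, for a.e.\ $t$, $W\cap C_m^{-1}(t)=P\big(F\cap C_{mk}^{-1}(t)\big)=\{P(\pi z(t))\colon\pi\in S(mk)\}$. Writing $I_i=\{(i-1)k+1,\dots,ik\}$, the $i$-th coordinate of $P(\pi z)$ is $\prod_{j\in\pi(I_i)}z_j$, and since $z=z(t)$ has no product coincidences among $k$-element subsets, $P(\pi z)=P(\pi' z)$ forces $\pi(I_i)=\pi'(I_i)$ for every $i$, i.e.\ $\pi^{-1}\pi'\in H:=S(I_1)\times\dots\times S(I_m)\cong S(k)^m$, i.e.\ $\pi H=\pi' H$. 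Hence $|W\cap C_m^{-1}(t)|=[S(mk):H]=\tfrac{(mk)!}{(k!)^m}$ for a.e.\ $t$ (in particular this number is an integer), and Proposition~\ref{tw42} applied to $\sigma^{\ast k}$ — which is continuous, $\sigma$ being continuous — with $n=m$ yields that $(V_{\sigma^{\ast k}})^{\otimes m}$ has homogeneous spectrum of multiplicity $\tfrac{(mk)!}{(k!)^m}$, as claimed.

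The step I expect to be the main obstacle is the passage from ``$C_{mk}$ is $(mk)!$-to-one on $F$'' to ``$F\cap C_{mk}^{-1}(t)$ is a single full $S(mk)$-orbit for almost every $t$'': one must be careful that the successive conull shrinkings of $F$ keep it invariant under coordinate permutations while still realising $(mk)!$-element fibres over almost every $t$, and one must handle the routine but genuine measurability bookkeeping involved in $W=P(F)$ and in converting an ``almost-every-$t$'' fibre count into the exact ``$N$-to-one on a Borel conull set'' form demanded by Proposition~\ref{tw42}.
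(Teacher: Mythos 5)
Your argument is correct, but it follows a genuinely different route from the paper. The paper stays on the Hilbert-space side: using Lemma~\ref{vproste} it identifies $V_{\sigma^{\ast k}}\simeq V_\sigma^{\odot k}$, hence $(V_{\sigma^{\ast k}})^{\otimes m}\simeq V_\sigma^{\otimes mk}|_{(H^{\odot k})^{\otimes m}}$, then recognizes (via the projection formula of Remark~\ref{rzut}) that $(H^{\odot k})^{\otimes m}=H^{\otimes mk}_{inv}(G)$ for the block subgroup $G\cong S(k)^m\subset S(mk)$, and concludes directly from Proposition~\ref{sigmaperm} that the multiplicity is $o_G=(mk)!/(k!)^m$. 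You instead work on the measure/fibre side: you only use Proposition~\ref{sigmaperm} in its simplest instance $G=\{Id\}$ to get that $C_{mk}$ is $(mk)!$-to-one on a conull set, and then re-derive the count for the block subgroup by hand through the factorization $C_{mk}=C_m\circ P$, identifying a.e.\ fibre of $C_{mk}$ with a full $S(mk)$-orbit (off the diagonals and the product-coincidence sets, which are null by the Remark~\ref{hip}-type Fubini argument) and counting its image under $P$ as the index $[S(mk):S(k)^m]$. This buys you independence from Lemma~\ref{vproste} and from the subspace identification, at the cost of exactly the measurability/fibre bookkeeping you flag: the image $P(F)$ is a priori only analytic, and conull Borel shrinkings can in principle lose fibre points. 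These issues are genuinely resolvable — either by taking an $S(mk)$-invariant $\sigma$-compact conull refinement of $F$ (so its $P$-image is $\sigma$-compact, hence Borel, exactly as the paper handles images in the proof of Proposition~\ref{pr:bezposc}) and then deleting $C_m^{-1}$ of the null set of bad $t$, or more cleanly by switching to the disintegration formulation in the Remark following Proposition~\ref{tw42}: pushing forward the conditional measures $\nu_t$ of $\sigma^{\otimes mk}$ (which for a.e.\ $t$ are purely atomic with atoms forming one full off-diagonal $S(mk)$-orbit) under $P$ yields conditional measures of $(\sigma^{\ast k})^{\otimes m}$ with exactly $(mk)!/(k!)^m$ atoms, which is precisely homogeneous multiplicity $(mk)!/(k!)^m$. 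The paper's argument packages all of this away inside Proposition~\ref{sigmaperm} and is shorter given that machinery; yours is more elementary and makes the coset-counting origin of the number $(mk)!/(k!)^m$ explicit.
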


\begin{proof}
The maximal spectral type of $V_{\sigma^{\ast k}}^{\otimes m}$ is $(\sigma^{\ast k})^{\ast m}=\sigma^{\ast mk}$. We have to determine its spectral multiplicity.

Since by Lemma~\ref{vproste} the operator $V_{\sigma}^{\odot k}$ has simple spectrum and its maximal spectral type is $\sigma^{\ast k}$, we have $V_{\sigma^{\ast k}}\simeq V_\sigma^{\odot k}$. Hence
$$
(V_{\sigma^{\ast k}})^{\otimes m} \simeq (V_\sigma^{\odot k})^{\otimes m} = \left( V_\sigma^{\otimes k}|_{H^{\odot k}}\right)^{\otimes m}=V_\sigma^{\otimes km}|_{(H^{\odot k})^{\otimes m}},
$$
where $H=L^2(\mathbb{T},\sigma)$.

We identify $\{1,\dots,mk\}$ with $\{1,\dots,k\}\times \{1,\dots ,m\}$. Consider the subgroup $G$ of $S(mk)$ of all permutations
$$
\pi \colon  (i,j)\mapsto (\pi_j(i),j),\ \ 1\leq i\leq k, 1\leq j\leq m,
$$
where $\pi_j\in S(k)$ for $1\leq j\leq m$. Then $\# G=(k!)^m$ and it follows from Remark~\ref{rzut} that
$$
proj_{H_{inv}^{\otimes mk}(G)}=\frac{1}{(k!)^m}\sum_{\pi\in G}U_{\pi}=\frac{1}{(k!)^m}\sum_{(\pi_1,\ldots,\pi_m)\in S(k)^m}U_{\pi_1}\otimes\cdots\otimes U_{\pi_m}=\otimes_{j=1}^{m}proj_{H^{\odot k}},
$$
so
$$
(H^{\odot k})^{\otimes m}=H_{inv}^{\otimes mk}(G).
$$
Therefore, as $V_\sigma^{\odot mk}$ has simple spectrum, by Proposition~\ref{sigmaperm}, ${V_\sigma^{\otimes mk}}|_{({H^{\odot k}})^{\otimes m}}$ has homogeneous spectrum of multiplicity
$$
o_G=\frac{(mk)!}{\# G}=\frac{(mk)!}{(k!)^m}.
$$

\end{proof}
Using Proposition~\ref{krot} and Remark~\ref{uw:nowa}, we obtain the following.
\begin{wn}\label{wn:bububu}
Let $\sigma$ be a continuous measure on $\mathbb{T}$ such that the spectrum of $F_{sym}(V_{\sigma})$ is simple. Then for $k\geq 1$ and $m\geq 1$ the operator $\left(V_{\sigma^{\ast k}} \right)^{\odot m}$ has homogeneous spectrum of multiplicity $\frac{(mk)!}{(k!)^m} \cdot \frac{1}{m!}$.
\end{wn}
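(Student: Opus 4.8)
\section*{Proof proposal}

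The plan is to obtain the statement purely by combining Proposition~\ref{krot} with Remark~\ref{uw:nowa}, applied not to $V_\sigma$ but to the operator $U=V_{\sigma^{\ast k}}$. First I would record the elementary facts that make this legitimate: $V_{\sigma^{\ast k}}$ is by definition the operator $f(z)\mapsto zf(z)$ on $L^2(\mathbb{T},\sigma^{\ast k})$, so it has simple spectrum and maximal spectral type $\sigma^{\ast k}$; moreover $\sigma^{\ast k}$ is a continuous measure, since the convolution of the continuous measure $\sigma$ with any finite measure is continuous. Hence $U=V_{\sigma^{\ast k}}$ meets the hypotheses of Remark~\ref{uw:nowa} with $n$ there equal to $m$.

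Next I would feed in the hypothesis on $F_{sym}(V_\sigma)$. Since $F_{sym}(V_\sigma)=\bigoplus_{j\ge1}V_\sigma^{\odot j}$ has simple spectrum, each summand, in particular $V_\sigma^{\odot mk}$, has simple spectrum (a summand of multiplicity $\ge 2$ would force the direct sum to have multiplicity $\ge 2$). Proposition~\ref{krot}, applied with the integers $k$ and $m$ as stated there, then gives that $(V_{\sigma^{\ast k}})^{\otimes m}$ has homogeneous spectrum of multiplicity $\frac{(mk)!}{(k!)^m}$, with maximal spectral type $\sigma^{\ast mk}$ (again continuous).

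Finally I would invoke Remark~\ref{uw:nowa} in the form $U^{\otimes m}\simeq\bigoplus_{j=1}^{m!}U^{\odot m}$ for $U$ with simple spectrum and continuous maximal spectral type: this says precisely that $U^{\odot m}$ has homogeneous multiplicity $r$ if and only if $U^{\otimes m}$ has homogeneous multiplicity $m!\cdot r$. Taking $U=V_{\sigma^{\ast k}}$ and using the value $\frac{(mk)!}{(k!)^m}$ from the previous step for $U^{\otimes m}$, we conclude that $(V_{\sigma^{\ast k}})^{\odot m}$ has homogeneous spectrum of multiplicity $\frac{1}{m!}\cdot\frac{(mk)!}{(k!)^m}$, which is the claimed value. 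I expect no real obstacle here: once Proposition~\ref{krot} and Remark~\ref{uw:nowa} are available the argument is bookkeeping, and the only points demanding a line of care are (i) applying Remark~\ref{uw:nowa} to $V_{\sigma^{\ast k}}$ rather than to $V_\sigma$, checking that its maximal spectral type $\sigma^{\ast k}$ is continuous, and (ii) noting, as a consistency check, that $\frac{(mk)!}{(k!)^m\,m!}$ is indeed an integer, being the number of partitions of an $mk$-element set into $m$ unordered blocks of size $k$.
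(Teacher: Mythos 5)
Your proposal is correct and is exactly the route the paper takes: the corollary is stated there as a direct consequence of Proposition~\ref{krot} (giving multiplicity $\frac{(mk)!}{(k!)^m}$ for $(V_{\sigma^{\ast k}})^{\otimes m}$) combined with Remark~\ref{uw:nowa} applied to $U=V_{\sigma^{\ast k}}$, which divides by $m!$. Your added checks (continuity of $\sigma^{\ast k}$, simplicity of $V_\sigma^{\odot mk}$ from simplicity of $F_{sym}(V_\sigma)$) are the right details to verify and are consistent with the paper.
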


\begin{uw}\label{robinson}
Notice that Corollary~\ref{wn:bububu} yields a generalization of the example provided by A.~I.~Danilenko and V.~V.~Ryzhikov in~\cite{MR2794951} which shows that there exists a unitary operator $U$ whose set of spectral multiplicities of $\oplus_{m\geq 1}U^{\odot m}$ is equal to 
\begin{equation*}
\{1,1\cdot 3,1\cdot 3\cdot 5,1\cdot 3\cdot 5\cdot 7,\ldots\}.
\end{equation*}
This is the special case of Corollary~\ref{wn:bububu} with $k=2$ and it shows that the set of spectral multiplicities for a Gaussian system need not be a multiplicative sub-semigroup of $\mathbb{N}$ (contrary to the claim of A.~E.~Robinson from~\cite{robi}).
\end{uw}

\begin{uw}\label{hip}
Let $\mu_1,\dots,\mu_n$ be continuous measures on $\mathbb{T}$. Then by Fubini's theorem
\begin{equation*}
\mu_1 \otimes \dots \otimes \mu_n ( \{(z_1,\dots,z_n) \in \mathbb{T}^n;\ z_1^{\varepsilon_1}\cdot \ldots \cdot z_n^{\varepsilon_n}=c \})=0,
\end{equation*}
for $\varepsilon_1,\dots,\vep_n \in \{-1,0,1\}$ with $\sum_{i=1}^n \varepsilon_i^2 \neq 0$ and every $c \in \mathbb{T}$.
\end{uw}

\begin{tw}\label{gltw}
Let $\sigma$ be a continuous measure on $\mathbb{T}$. If $V^{\odot mk}_{\sigma}$ has simple spectrum, then $\sigma^{\ast k}$ has the $CS($n$)$ property for any $n$ such that such that $(m!)^n>(mk)!/(k!)^m$.
\end{tw}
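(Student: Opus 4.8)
The plan is to argue by contradiction, converting the failure of the CS($n$) property into a lower bound on the spectral multiplicity of $V_{\sigma^{\ast k}}^{\otimes m}$ that clashes with Proposition~\ref{krot}. Two preliminary reductions: first, since $\nu_1\ast\dots\ast\nu_n\ll(\nu_1+\dots+\nu_n)^{\ast n}$ and $\nu_1+\dots+\nu_n$ is again continuous, it is enough to prove $\sigma^{\ast k}\perp\nu^{\ast n}$ for every continuous measure $\nu$; second, $\sigma^{\ast k}\perp\nu^{\ast n}$ is equivalent to $(\sigma^{\ast k})^{\otimes m}\perp(\nu^{\ast n})^{\otimes m}$. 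So assume $\sigma^{\ast k}\not\perp\nu^{\ast n}$ and fix a nonzero measure $\theta$ with $\theta\ll\sigma^{\ast k}$ and $\theta\ll\nu^{\ast n}$ (e.g. the absolutely continuous part of $\sigma^{\ast k}$ relative to $\nu^{\ast n}$); note $\theta$ is continuous, being dominated by $\nu^{\ast n}$.

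The easy half is the upper bound. By Proposition~\ref{krot}, $V_{\sigma^{\ast k}}^{\otimes m}$ has homogeneous spectrum of multiplicity $N:=(mk)!/(k!)^m$ with maximal spectral type $\sigma^{\ast mk}$; by Proposition~\ref{tw42}, equivalently by the disintegration reformulation in the Remark following it, the conditional measures of $(\sigma^{\ast k})^{\otimes m}$ along the fibres of $C_m$ are, $\sigma^{\ast mk}$-almost everywhere, purely atomic with exactly $N$ atoms. Since $\theta^{\otimes m}\ll(\sigma^{\ast k})^{\otimes m}$, the conditional measures of $\theta^{\otimes m}$ along the fibres of $C_m$ are purely atomic with at most $N$ atoms, i.e. $V_\theta^{\otimes m}$ has multiplicity $\le N$ almost everywhere with respect to $\theta^{\ast m}$.

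The substantial half is the lower bound: using $\theta\ll\nu^{\ast n}$ and Remark~\ref{hip}, one shows $V_\theta^{\otimes m}$ has multiplicity at least $(m!)^n$. I would pass to a lift: an $S(n)$-invariant measure $\widetilde\theta\ll\nu^{\otimes n}$ on $\mathbb{T}^n$ with $(C_n)_\ast\widetilde\theta=\theta$ (take any lift and symmetrize), then work on $\mathbb{T}^{mn}$ with coordinates indexed by $(j,i)$, $1\le j\le m$, $1\le i\le n$, carrying $\widetilde\theta^{\otimes m}$ (the tensor power over the blocks $j$), with $\Psi=C_n\times\dots\times C_n\colon\mathbb{T}^{mn}\to\mathbb{T}^m$ taking the $m$ block-products, so that $\Psi_\ast\widetilde\theta^{\otimes m}=\theta^{\otimes m}$ and $C_m\circ\Psi=C_{mn}$. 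The $(m!)^n$ ``column permutations'' $\widehat\pi$, for $\pi=(\pi_1,\dots,\pi_n)\in S(m)^n$ acting by $(j,i)\mapsto(\pi_i(j),i)$, permute coordinates, hence preserve the ambient product $\nu^{\otimes mn}$ and satisfy $C_{mn}\circ\widehat\pi=C_{mn}$. Remark~\ref{hip}, applied to the coordinates (each of spectral type $\nu$, continuous), shows that for almost every point the $m$-tuples $\Psi(\widehat\pi\cdot)$, $\pi\in S(m)^n$, of shuffled block-products are pairwise distinct; transporting this through the disintegration over $C_{mn}$ and the pushforward $\Psi$, one obtains $(m!)^n$ pairwise orthogonal cyclic subspaces with common maximal spectral type $\theta^{\ast m}$ (a non-atomic fibre, should it occur, only helps, forcing infinite multiplicity), whence $V_\theta^{\otimes m}$ has multiplicity $\ge(m!)^n$. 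Combining the two halves gives $(m!)^n\le N$, contradicting the hypothesis; hence $\sigma^{\ast k}\perp\nu^{\ast n}$ for all continuous $\nu$, and the CS($n$) property follows from the first reduction.

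The hard part, and where one must be careful, is precisely the orthogonality in the lower bound: the column permutations $\widehat\pi$ do \emph{not} preserve the lifted measure $\widetilde\theta^{\otimes m}$ (only the ambient $\nu^{\otimes mn}$), so the pairwise orthogonality of the relevant cyclic subspaces cannot be read off from a symmetry of $\widetilde\theta^{\otimes m}$ directly — it has to be extracted from the ``distinctness of $\Psi$-images'' furnished by Remark~\ref{hip}, e.g. by working with the averaged measure $\sum_{\pi}(\widehat\pi)_\ast\widetilde\theta^{\otimes m}$ and comparing the conditional measures over $C_{mn}$ fibrewise. Bookkeeping of the atomic weights in the disintegrations, and the routine check that $\theta$ is purely singular with respect to Lebesgue measure (which rules out pathological non-atomic fibres on the $\sigma^{\ast k}$ side), are the remaining, purely technical, chores.
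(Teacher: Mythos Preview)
Your overall architecture — upper bound on the fibre size from Proposition~\ref{krot}, lower bound from the convolution structure, contradiction — is exactly the paper's. The upper bound is fine. The gap is in the lower bound, and it is not ``purely technical''.

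You assert that $V_\theta^{\otimes m}$ has multiplicity at least $(m!)^n$, i.e.\ that the conditional measures of $\theta^{\otimes m}$ over $C_m$ have at least $(m!)^n$ atoms. Your mechanism for producing these atoms is the column-permutation orbit $\{\Psi(\widehat\pi z):\pi\in S(m)^n\}$ of a $\widetilde\theta^{\otimes m}$-generic $z\in\mathbb{T}^{mn}$. Remark~\ref{hip} does guarantee these $(m!)^n$ points are distinct and lie in the same $C_m$-fibre; but there is no reason they are atoms of the conditional measure of $\theta^{\otimes m}$. For $\pi\neq id$ the ``mixed'' row $(z_{\pi_1(j),1},\dots,z_{\pi_n(j),n})$ is not in the support of $\widetilde\theta$, so $\Psi(\widehat\pi z)_j=\prod_i z_{\pi_i(j),i}$ need not lie in $A=\mathrm{supp}\,\theta$, and $\Psi(\widehat\pi z)$ need not lie in the set $F$ on which $C_m$ is $N$-to-one. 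Averaging to $\sum_\pi(\widehat\pi)_\ast\widetilde\theta^{\otimes m}$ does not help: the pushforward of that measure by $\Psi$ is no longer $\theta^{\otimes m}$, so even if you could read off $(m!)^n$ atoms for \emph{its} fibre measures, that says nothing about the multiplicity of $V_\theta^{\otimes m}$, and there is no mechanism forcing those atoms to belong to $F$.

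The paper does not try to bound the multiplicity of $V_\theta^{\otimes m}$ at all. It works with the ambient product measure $(\sigma_1\otimes\dots\otimes\sigma_n)^{\otimes m}$ (your $\nu^{\otimes mn}$), which \emph{is} invariant under the column permutations, and it uses a Lebesgue-density/parallelepiped argument: approximate $\widetilde A=C_n^{-1}(A)$ up to $\varepsilon$ by a rectangle $B_1\times\dots\times B_n$. The point is that $(B_1\times\dots\times B_n)^m$ \emph{is} column-permutation invariant, so one can force a point $z$ whose entire $S(m)^n$-orbit lies in $\widetilde A^m\cap\widetilde F$; then all $(m!)^n$ images $\Psi(\widehat\pi z)$ lie in $F$, contradicting $N$-to-one. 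This density-point step is the missing idea in your proposal; without it the orbit simply escapes from $F$ and no contradiction follows.
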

This theorem is proved for the case where $k=1$ and $m=2$ in~\cite{pa-ro}. Here we provide the proof in the whole generality.
\begin{proof}
Let $k,n,m\in\mathbb{N}$ be such that $(m!)^n>\frac{(mk)!}{(k!)^m}$. Assume that for some continuous measures $\sigma_1, \ldots , \sigma_n$ we have
\begin{equation}\label{nieort}
\sigma^{\ast k} \not\perp \sigma_1 \ast \ldots \ast \sigma_n.
\end{equation}
We may assume that $\sigma,\sigma_1,\dots,\sigma_n$ are probability measures. By Propositions~\ref{krot} and \ref{tw42}, the map $C_m$ is $\frac{(mk)!}{(k!)^m}$-to-one on some set $F\subset \mathbb{T}^{m}$ with $(\sigma^{\ast k})^{\otimes m}(F)=1$.
We claim that under our assumptions this yields a contradiction:  we will find $(m!)^n$ distinct points from the set $F$ with the same product of coordinates. It is not possible since by assumption $(m!)^n>\frac{(mk)!}{(k!)^m}$.

Since we have assumed \eqref{nieort}, there exists a Borel set $A\subset \mathbb{T}$ with $\sigma_1 \ast \ldots \ast \sigma_n (A) >0$ such that
\begin{equation*}
\sigma_1 \ast \ldots \ast \sigma_n \vert _{A} \ll \sigma ^{\ast k}.
\end{equation*}
Therefore
\begin{equation*}
(\sigma_1 \ast \ldots \ast \sigma_n)^{\otimes m} (A^{m}\cap F^c )=0,
\end{equation*}
whence
\begin{equation}\label{anzero}
(\sigma_1 \otimes \ldots \otimes \sigma_n)^{\otimes m}(\widetilde{A}^{m} \cap \widetilde{F}^c)=0,
\end{equation}
where $\widetilde{A}=C_n^{-1}(A)$ and $\widetilde{F}=((C_n)^{m})^{-1}(F)$.

Now fix $\varepsilon >0$. There exist sets $B_1, \dots, B_n \in \mathcal{B}(\mathbb{T})$ such that $\sigma_1 \otimes \ldots \otimes \sigma_n (B_1 \times \ldots \times B_n)>0$ and the ``parallelepiped'' $B_1 \times \ldots \times B_n$ is included in~$\widetilde{A}$ up to~$\varepsilon$, precisely speaking
\begin{equation}\label{male}
\sigma_1 \otimes \ldots \otimes \sigma_n (B_1 \times \ldots \times B_n \setminus \widetilde{A}) < \varepsilon\ \sigma_1 \otimes \ldots \otimes \sigma_n (B_1 \times \ldots \times B_n).
\end{equation}

We identify again $\{1,\dots,mn\}$ with $\{1,\dots,n\}\times \{1,\dots ,m\}$.
Let now $G \subset S(mn)$ be the subgroup of permutations of the form $\pi=(\pi_i)_{1\leq i \leq n}$ with $\pi_i \in S(m)$ ($1\le i\le n$), defined by
\begin{equation*}
\pi \colon  (i,j)\mapsto (i,\pi_i(j)),\ 1\leq i\leq n, 1\leq j\leq m,
\end{equation*}
and acting on $(\mathbb{T}^{n})^{m}$ by
\begin{equation*}
(z_{i,j})_{1\leq i\leq n,\,1\leq j \leq m}\mapsto (z_{\pi(i,j)})_{1\leq i\leq n,\,1\leq j \leq m}.
\end{equation*}
Notice that such permutations $\pi$ preserve the measure $(\sigma_1 \otimes \ldots \otimes \sigma_n)^{\otimes m}$ and that the sets $(B_1\times \ldots \times B_n)^{m}$ are invariant under the action of $G$. Without loss of generality we can also assume that $\widetilde{F}$ is invariant under $G$ (we can restrict to $\bigcap_{\pi\in G}\pi^{-1}(\widetilde{F})$, which is still of full measure).

\vskip 1ex
Since
\begin{equation*}
(B_1\times \dots \times B_n)^{m} \setminus \widetilde{A}^{ m}
\subset\bigcup_{j=1}^m (\mathbb{T}^{n})^{j-1}\times
\left( B_1\times \dots \times B_n \setminus \widetilde{A}\right)
\times (\mathbb{T}^{n})^{m-j},
\end{equation*}
we have by~\eqref{male}
\begin{multline*}
(\sigma_1 \otimes \ldots \otimes \sigma_n)^{\otimes m} \left( (B_1 \times \ldots \times B_n)^{m}\setminus \widetilde{A}^{m} \right)\leq
\\
\leq m\ \varepsilon\cdot \sigma_1 \otimes \ldots \otimes \sigma_n(B_1 \times \ldots \times B_n) ,
\end{multline*}
whence for all $\pi \in G$
\begin{multline*}
(\sigma_1 \otimes \ldots \otimes \sigma_n)^{\otimes m} \left( (B_1 \times \ldots \times B_n)^{m}\setminus \pi^{-1}(\widetilde{A}^{m}) \right)\leq
\\
\leq m\ \varepsilon\cdot \sigma_1 \otimes \ldots \otimes \sigma_n(B_1 \times \ldots \times B_n) .
\end{multline*}
Therefore
\begin{multline*}
(\sigma_1 \otimes \ldots \otimes \sigma_n)^{\otimes m} \left( (B_1 \times \ldots \times B_n)^{m}\setminus \bigcap_{\pi \in G} \pi^{-1}(\widetilde{A}^{m}) \right)\leq
\\
\leq \# G \cdot m\varepsilon\cdot \sigma_1 \otimes \ldots \otimes \sigma_n(B_1 \times \ldots \times B_n).
\end{multline*}
So, if $\varepsilon$ is small enough, by~\eqref{anzero}
\begin{equation*}
(\sigma_1 \otimes \ldots \otimes \sigma_n)^{\otimes m}\left(  (B_1 \times \ldots \times B_n)^{m}\cap \bigcap_{\pi \in G} \pi^{-1}(\widetilde{A}^{m})\cap \widetilde{F} \right)>0,
\end{equation*}
and, since $\sigma_1,\ldots, \sigma_n$ are continuous measures, by Remark~\ref{hip}, we can find an element $(z_{i,j})$ in this set, for which moreover
\begin{equation*}
z_{1,j_1}\cdot \ldots \cdot z_{n,j_n}\neq z_{1,j'_1}\cdot \ldots \cdot z_{n,j'_n}
\end{equation*}
whenever $(j_1,\dots,j_n)$ and $(j'_1,\dots,j'_n)$ are distinct elements of $\{1,\dots, m\}^n$.

If $\pi=(\pi_i)_{1\le i\le n}, \pi'=(\pi'_i)_{1\le i\le n} \in G$ are distinct, there exists $1\leq i\leq n$ such that $\pi_i(j)\neq \pi'_i(j)$ for some $1 \leq j \leq m$, whence
\begin{equation*}
(\pi_1(j),\dots \pi_n(j)) \neq (\pi'_1(j),\dots \pi'_n(j))
\end{equation*}
and thus
\begin{equation*}
z_{1,\pi_1(j)}\cdot\ldots \cdot z_{n,\pi_n(j)} \neq z_{1,\pi'_1(j)}\cdot\ldots\cdot z_{n,\pi'_n(j)}.
\end{equation*}
Therefore the elements
\begin{equation*}
(z_{1,\pi_1(1)}\cdot \ldots \cdot z_{n,\pi_n(1)},\ldots,z_{1,\pi_1(m)}\cdot \ldots \cdot z_{n,\pi_n(m)})\in F
\end{equation*}
for $\pi=(\pi_i)_{1\le i\le n}\in G$ are all distinct. Clearly they have the same product of coordinates, and $\# G=(m!)^n$, which completes the proof.
\end{proof}

\begin{wn}\label{wniosek}
Let $\sigma$ be a continuous measure on $\mathbb{T}$. If $V_\sigma^{\odot n}$ has simple spectrum for infinitely many $n\in\mathbb{N}$ then for every $k\geq 1$ the measure $\sigma^{\ast k}$ has the $CS(k+1)$ property.
\end{wn}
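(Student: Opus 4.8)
The plan is to derive the corollary directly from Theorem~\ref{gltw}, the only real work being to choose the free parameter $m$ well and to check an elementary growth comparison.

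First I would observe that the hypothesis, together with Lemma~\ref{vproste}, in fact forces $V_\sigma^{\odot n}$ to have simple spectrum for \emph{every} $n\geq 1$: given $j\geq 1$, pick some $n\geq j$ inside the infinite set of indices for which simplicity is assumed, and apply Lemma~\ref{vproste}. In particular, for every $m\geq 1$ the operator $V_\sigma^{\odot mk}$ has simple spectrum, so Theorem~\ref{gltw}, applied with $n=k+1$, yields that $\sigma^{\ast k}$ has the CS($k+1$) property as soon as we can exhibit a single $m\geq 1$ with
$$
(m!)^{k+1}>\frac{(mk)!}{(k!)^m}.
$$

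To produce such an $m$ for a fixed $k$, I would write $\dfrac{(mk)!}{(k!)^m}=\prod_{j=1}^{m}\binom{jk}{k}$ and use the crude bound $\binom{jk}{k}\le\dfrac{(jk)^{k}}{k!}$, which gives
$$
\frac{(mk)!}{(k!)^m}\le\frac{1}{(k!)^m}\prod_{j=1}^{m}(jk)^{k}=\Bigl(\frac{k^{k}}{k!}\Bigr)^{m}(m!)^{k}.
$$
Hence the displayed inequality holds whenever $\bigl(k^{k}/k!\bigr)^{m}<m!$, and this is true for all sufficiently large $m$, since $m!$ outgrows any fixed geometric progression (for instance $m!\ge (m/e)^{m}$, so any $m>e\,k^{k}/k!$ works). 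Plugging this $m$ into Theorem~\ref{gltw} finishes the proof.

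I do not anticipate a genuine obstacle: the entire argument rests on the growth comparison of the last paragraph, and the trivial estimate $\binom{jk}{k}\le (jk)^{k}/k!$ is already strong enough to beat the super-exponential factor $(m!)^{k+1}$. The one thing to keep in mind is simply that $m$ is allowed to depend on $k$, which is harmless because Theorem~\ref{gltw} treats $m$ as a free parameter.
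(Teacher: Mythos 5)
Your proof is correct and takes essentially the same route as the paper's: Lemma~\ref{vproste} upgrades the hypothesis to simplicity of every symmetric power, and Theorem~\ref{gltw} reduces the claim to finding one $m$ with $(m!)^{k+1}>(mk)!/(k!)^m$. The only difference is how that inequality is checked for large $m$ — the paper shows the ratio $a_{m+1}/a_m$ of $a_m=(m!)^{k+1}(k!)^m/(mk)!$ tends to infinity, while you use the telescoping identity $(mk)!/(k!)^m=\prod_{j=1}^m\binom{jk}{k}$ together with $\binom{jk}{k}\le (jk)^k/k!$ and $m!\ge (m/e)^m$ — a purely cosmetic variation.
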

\begin{proof}
By Lemma~\ref{vproste}, $V_\sigma^{\odot n}$ has simple spectrum for every $n\in\mathbb{N}$. In particular, when $k\geq 1$ is fixed, $V_\sigma^{\odot mk}$ has simple spectrum for every $m\geq 1$. Let then, for $m\geq 1$,
\begin{equation*}
a_m=(m!)^{k+1}\cdot \frac{(k!)^m}{(mk)!}\,\cdotp
\end{equation*}
By Theorem~\ref{gltw}, it suffices to show that $a_m>1$ for $m$ big enough.
We have
\begin{multline*}
a_{m+1}= a_m \cdot (m+1)^{k+1} \cdot \frac{k!}{(mk+1)\cdot\ldots\cdot((m+1)k)}
\\
\geq a_m \cdot (m+1)^{k+1} \frac{k!}{((m+1)k)^k}=a_m \frac{k!}{k^k}(m+1).
\end{multline*}
Since $k$ is fixed, $a_{m+1}/a_m$ tends to infinity as $m$ increases, which ends the proof.
\end{proof}

\begin{wn}\label{wn:prostygausstocsn}
For every $n \geq 1$ the Cartesian product of $n$ copies of a weakly mixing automorphism whose reduced maximal spectral type generates a Gaussian system with simple spectrum has the CS($n+1$) property.
\end{wn}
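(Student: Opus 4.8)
The plan is to reduce the statement to Corollary~\ref{wniosek} applied to the single automorphism, combined with a bookkeeping computation of the maximal spectral type of the $n$-fold Cartesian product.

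First I would fix a weakly mixing automorphism $T\colon\xbm\to\xbm$ whose reduced maximal spectral type $\sigma$ (the maximal spectral type of $U_T|_{L^2_0(X)}$) generates a Gaussian system with simple spectrum. Weak mixing of $T$ means exactly that $U_T$ has no eigenvalues on $L^2_0(X)$, so $\sigma$ is a continuous measure. By the remark recalled in Section~\ref{sec:def}, simplicity of the Gaussian automorphism associated with $\sigma$ is equivalent to simplicity of the spectrum of $F_{sym}(V_\sigma)=\bigoplus_{m\ge 1}V_\sigma^{\odot m}$; in particular $V_\sigma^{\odot m}$ has simple spectrum for every $m\ge 1$. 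Hence Corollary~\ref{wniosek} applies and yields that $\sigma^{\ast k}$ has the CS($k+1$) property for every $k\ge 1$.

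Next I would identify the maximal spectral type of $T\times\cdots\times T$ ($n$ copies). Writing $L^2(X^n)=\bigotimes_{i=1}^n\bigl(\mathbb{C}\oplus L^2_0(X)\bigr)$ and expanding, $L^2(X^n)$ decomposes into the subspaces $\bigotimes_{i\in S}L^2_0(X)$ over $S\subseteq\{1,\dots,n\}$; on the summand with $\#S=j$ the Koopman operator of the product acts (up to spectral equivalence) as $\bigl(U_T|_{L^2_0(X)}\bigr)^{\otimes j}$, whose maximal spectral type is $\sigma^{\ast j}$ by the convolution formula for maximal spectral types of tensor products. Consequently the maximal spectral type of $T\times\cdots\times T$ is equivalent to $\delta_1+\sum_{j=1}^n\sigma^{\ast j}$.

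Finally I would assemble the pieces, using the elementary observation that the CS($m$) property implies CS($m'$) for every $m'\ge m$: a convolution $\nu_1\ast\cdots\ast\nu_{m'}$ of continuous measures is, after grouping its first $m'-m+1$ factors into a single (still continuous) measure, a convolution of $m$ continuous measures. Therefore each $\sigma^{\ast j}$ with $1\le j\le n$, having CS($j+1$), also has CS($n+1$); and $\delta_1$ trivially has CS($n+1$) since any convolution of continuous measures is continuous, hence nonatomic. As a finite sum of measures each singular with respect to a fixed measure is again singular with respect to it, the maximal spectral type $\delta_1+\sum_{j=1}^n\sigma^{\ast j}$ is singular with respect to $\nu_1\ast\cdots\ast\nu_{n+1}$ for all continuous $\nu_1,\dots,\nu_{n+1}$, which is precisely the CS($n+1$) property. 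The only point requiring a little care is the identification of the maximal spectral type of the Cartesian product; the rest is just repackaging of the earlier results, so I do not anticipate a genuine obstacle.
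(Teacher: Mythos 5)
Your proof is correct and follows essentially the same route as the paper: apply Corollary~\ref{wniosek} to the continuous measure $\sigma$ (continuity coming from weak mixing, simplicity of each $V_\sigma^{\odot m}$ from simplicity of the Gaussian spectrum), identify the maximal spectral type of $T^{\times n}$ as $\delta_1+\sum_{j=1}^n\sigma^{\ast j}$, and conclude singularity with respect to any convolution of $n+1$ continuous measures. You merely spell out two steps the paper leaves implicit, namely the $L^2(X^n)$ decomposition and the fact that CS($j+1$) implies CS($n+1$) for $j\le n$ by grouping convolution factors, which is fine.
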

\begin{proof}
Let $T\colon (X,\mathcal{B},\mu)\to(X,\mathcal{B},\mu)$ be a weakly mixing automorphism whose reduced maximal spectral type $\sigma$ generates a Gaussian system with simple spectrum. Recall that this implies that $V_\sigma^{\odot n}$ has simple spectrum for all $n$. By Corollary~\ref{wniosek} the measure $\sigma^{\ast k}$ is singular with respect to the convolution of any $k+1$ continuous measures on $\mathbb{T}$ for $k\geq 1$. Therefore
\begin{equation*}
\sigma_{T^{\times n}}=\delta_0+\sum_{k=1}^{n} \sigma^{\ast k} \perp \mu_1 \ast \dots \ast \mu_{n+1}
\end{equation*}
for every continuous measures $\mu_i$ ($1\leq i \leq n+1$) which ends the proof.
\end{proof}

\begin{uw}\label{uwazka}
It is known that a typical automorphism (with respect to the weak operator topology) is weakly mixing (Halmos~\cite{MR0011173}) and its reduced maximal spectral type generates a Gaussian system with simple spectrum (Ageev~\cite{MR1680995}).
\end{uw}

\begin{wn}\label{wn:wnioseczek}
For every $n \geq 1$ the Cartesian product of $n$ copies of a typical automorphism (with respect to the weak operator topology) has the CS($n+1$) property.
\end{wn}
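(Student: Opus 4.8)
The plan is to deduce this directly from the two statements immediately preceding it, with essentially no extra work. Fix $n\geq 1$. Recall that ``typical'' is meant in the Baire category sense: a property holds for a typical automorphism if the set of automorphisms enjoying it is residual in the group of automorphisms of a fixed standard probability Borel space, taken with the weak operator topology. By Remark~\ref{uwazka}, the set $\mathcal{G}$ of all automorphisms $T$ that are weakly mixing and whose reduced maximal spectral type generates a Gaussian system with simple spectrum is residual (this combines Halmos's genericity of weak mixing with Ageev's genericity result from~\cite{MR1680995}). By Corollary~\ref{wn:prostygausstocsn}, every $T\in\mathcal{G}$ has the property that its $n$-fold Cartesian product $T^{\times n}$ satisfies CS($n+1$).

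Hence the set of automorphisms $T$ for which $T^{\times n}$ has the CS($n+1$) property contains $\mathcal{G}$ and is therefore itself residual, being a superset of a residual set. This is precisely the assertion. There is no real obstacle: the entire spectral content—the passage from simple spectrum of the symmetric tensor powers of $V_{\sigma}$ to the singularity of $\sigma^{\ast k}$ with respect to convolutions of $k+1$ continuous measures, and then to CS($n+1$) for $\sigma_{T^{\times n}}=\delta_0+\sum_{k=1}^{n}\sigma^{\ast k}$—has already been carried out in Corollary~\ref{wniosek} and Corollary~\ref{wn:prostygausstocsn}, and the typicality input is exactly Remark~\ref{uwazka}. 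The only minor point worth stating explicitly in the write-up is that a property implied by a typical property is itself typical, which makes the monotonicity step above automatic.
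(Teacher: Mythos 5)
Your proposal is correct and follows exactly the paper's argument: the corollary is obtained by combining Remark~\ref{uwazka} with Corollary~\ref{wn:prostygausstocsn}, the only (trivial) additional remark being that a superset of a residual set is residual.
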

\begin{proof}
It is a direct consequence of Corollary~\ref{wn:prostygausstocsn} and Remark~\ref{uwazka}.
\end{proof}

\subsection{Spectral simplicity of $F_{sym}(U)$}

Let $U\colon H\to H$ be a unitary operator acting on a separable Hilbert space, with continuous maximal spectral type. Let $F_{sym}(U)$ be the corresponding operator on the symmetric Fock space $F_{sym}(H)$. The spectrum of $F_{sym}(U)$ is simple if the following two conditions hold:
\begin{enumerate}
\item
The spectrum of each $U^{\odot k}$ for $k\geq 1$ is simple.
\item
The maximal spectral types of $U^{\odot k}$ are orthogonal: $\sigma_U^{\ast k} \perp \sigma_U^{\ast l}$ for $k\neq l$.
\end{enumerate}
These two conditions are not independent: the first one implies the second one. This follows directly from Corollary~\ref{wniosek}, Remark~\ref{uproste} and Lemma~\ref{vproste}. We also provide below a more precise consequence of the spectral simplicity of symmetric tensor products, with a direct proof.

\begin{pr}\label{pr:bezposc}
Let $\sigma$ be a continuous measure on $\mathbb{T}$. Let $n,m\ge 1$. If $V_\sigma^{\odot (m+n)}$ has simple spectrum then $\sigma^{\ast n}\perp \sigma^{\ast m}\ast \delta_a$
whenever $n\neq m$ or $a\neq 1$.
\end{pr}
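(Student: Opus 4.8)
The plan is to exploit the ``too many points with the same product'' mechanism from the proof of Theorem~\ref{gltw}, but directly in $\mathbb{T}^{n+m}$: from a hypothetical non-orthogonality I will produce two distinct coordinate-permutation orbits in $\mathbb{T}^{n+m}$ sharing the same product of all coordinates, which is impossible once $V_\sigma^{\odot(n+m)}$ is simple. Convolving with $\delta_{a^{-1}}$ shows that $\sigma^{\ast n}\perp\sigma^{\ast m}\ast\delta_a$ is equivalent to $\sigma^{\ast m}\perp\sigma^{\ast n}\ast\delta_{a^{-1}}$, and the hypothesis ``$n\ne m$ or $a\ne1$'' is symmetric under $(n,m,a)\mapsto(m,n,a^{-1})$, so we may assume $n\ge m$; we may also assume $\sigma$ is a probability measure. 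Since $V_\sigma^{\odot(n+m)}$ has simple spectrum, the operator $V_\sigma^{\otimes(n+m)}$ has homogeneous spectrum of multiplicity $(n+m)!$ (Proposition~\ref{krot} with $k=1$), so by Proposition~\ref{tw42} there is a Borel set with full $\sigma^{\otimes(n+m)}$-measure on which $C_{n+m}$ is $(n+m)!$-to-one; intersecting it with its images under the coordinate permutations and with the (full-measure, as $\sigma$ is continuous) set of tuples with pairwise distinct coordinates, we obtain a permutation-invariant Borel set $G\subset\mathbb{T}^{n+m}$ with $\sigma^{\otimes(n+m)}(G^c)=0$, all of whose elements have distinct coordinates and such that $\#\bigl(C_{n+m}^{-1}(c)\cap G\bigr)=(n+m)!$ for all $c$ in some Borel $Y\subset\mathbb{T}$ with $\sigma^{\ast(n+m)}(Y^c)=0$ (the fibre count being unchanged over $\sigma^{\ast(n+m)}$-a.e.\ $c$ after removing null sets, by the disintegration description in the remark following Proposition~\ref{tw42}).

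Suppose now that $\sigma^{\ast n}\not\perp\sigma^{\ast m}\ast\delta_a$. Their common absolutely continuous part is a nonzero finite measure $\eta$ on $\mathbb{T}$ with $\eta\ll\sigma^{\ast n}$ and $\eta\ll\sigma^{\ast m}\ast\delta_a$; normalise $\eta$ to a probability measure and put $\theta:=\eta\ast\delta_{a^{-1}}$, so $\theta\ll\sigma^{\ast m}$. Let $\sigma^{\otimes n}=\int_{\mathbb{T}}\alpha_z\,d\sigma^{\ast n}(z)$ and $\sigma^{\otimes m}=\int_{\mathbb{T}}\beta_w\,d\sigma^{\ast m}(w)$ be the disintegrations along $C_n$ and $C_m$, so $\alpha_z$ is carried by $C_n^{-1}(z)$ and $\beta_w$ by $C_m^{-1}(w)$, and set $\widetilde\eta:=\int_{\mathbb{T}}\alpha_z\,d\eta(z)$, $\widetilde\theta:=\int_{\mathbb{T}}\beta_w\,d\theta(w)$. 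Then $\widetilde\eta\ll\sigma^{\otimes n}$ and $\widetilde\theta\ll\sigma^{\otimes m}$ (in particular both are continuous), with $(C_n)_{\ast}\widetilde\eta=\eta$ and $(C_m)_{\ast}\widetilde\theta=\theta$.

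The key object is the finite measure $\Lambda:=\int_{\mathbb{T}\times\mathbb{T}}\alpha_{z_1}\otimes\beta_{a^{-1}z_2}\otimes\alpha_{z_2}\otimes\beta_{a^{-1}z_1}\,d(\eta\otimes\eta)(z_1,z_2)$ on $\mathbb{T}^n\times\mathbb{T}^m\times\mathbb{T}^n\times\mathbb{T}^m$, well defined since $a^{-1}z\in\operatorname{supp}\sigma^{\ast m}$ for $\eta$-a.e.\ $z$; denote its generic point $(u,v,u',v')$. Its marginals on $(u,v)$ and on $(u',v')$ are both $\widetilde\eta\otimes\widetilde\theta\ll\sigma^{\otimes(n+m)}$, its marginal on $(u,u')$ is $\widetilde\eta\otimes\widetilde\eta$, and $\Lambda$-a.e.\ one has $C_n(u)=z_1$, $C_m(v)=a^{-1}z_2$, $C_n(u')=z_2$, $C_m(v')=a^{-1}z_1$. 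Hence $\Lambda$-a.e.: the tuples $(u,v)$ and $(u',v')$ both lie in $G$; they have equal product of all coordinates, $C_{n+m}(u,v)=a^{-1}z_1z_2=C_{n+m}(u',v')$, and this value lies in $Y$; the coordinate sets of $u$ and $u'$ are disjoint (continuity of $\widetilde\eta$ and Fubini); and therefore the coordinate sets of $(u,v)$ and $(u',v')$ differ --- for if they were equal, then, the $n$-element coordinate set of $u$ being disjoint from that of $u'$, it would lie inside the $m$-element coordinate set of $v'$, which is absurd when $n>m$, and when $n=m$ forces $C_n(u)=C_m(v')$, i.e.\ $z_1=a^{-1}z_1$, contradicting $a\ne1$. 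So for $\Lambda$-a.e.\ point, $(u,v)$ and $(u',v')$ lie in $G$, share a product $c\in Y$, and lie in distinct permutation orbits; as $G$ is permutation-invariant with distinct-coordinate elements, $C_{n+m}^{-1}(c)\cap G$ then has at least $2(n+m)!$ elements, contradicting $\#\bigl(C_{n+m}^{-1}(c)\cap G\bigr)=(n+m)!$. Since $\Lambda\ne0$, such a point exists, and the proof is complete.

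The step I expect to be delicate is exactly this last one --- showing that $(u,v)$ and $(u',v')$ are not coordinate-permutations of one another. The pairs $(u,v)$ and $(u',v')$ are each ``independent enough'', their marginals being genuine product measures $\ll\sigma^{\otimes(n+m)}$, which is precisely what lets the fibre structure of $G$ apply to them; but $u$ and $v'$ (likewise $u'$ and $v$) are \emph{not} independent under $\Lambda$, so one cannot argue there by continuity of a product measure. Disjointness of the two orbits therefore has to be forced by the dimension gap $n>m$ or, in the balanced case $n=m$, by the twist by $a\ne1$ --- which is exactly why the statement must exclude $n=m$, $a=1$.
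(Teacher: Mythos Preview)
Your argument is correct and rests on the same combinatorial core as the paper's proof: from non-orthogonality one manufactures two $(n+m)$-tuples with the same product of coordinates that lie in different $S(n+m)$-orbits, contradicting simple spectrum of $V_\sigma^{\odot(n+m)}$. The paper obtains these two tuples by a direct two-step pointwise construction: it first picks $s$ and points $x\in\mathbb{T}^m$, $y\in\mathbb{T}^n$ with $C_m x=a^{-1}C_n y=s$, then (using the full-measure fibres over $x$ and $y$ inside the given set $F$) picks $s'$ and $x',y'$ analogously, and takes the swapped pairs $(x,y')$ and $(x',y)$; permutation-inequivalence is forced by arranging beforehand that the coordinates of $y'$ avoid those of $y$. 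Your route instead encodes the whole construction in a single coupling measure $\Lambda$ built from disintegrations, with $(u,v)$ and $(u',v')$ playing the role of the swapped pairs, and you force permutation-inequivalence via almost-sure disjointness of the coordinate sets of $u$ and $u'$. The swap mechanism and the case distinction ($n>m$ versus $n=m$, $a\ne1$) are identical in both proofs; the difference is packaging. The paper's version is more elementary (no disintegrations, no need to control the full-measure set $Y$ of good products), while yours makes the genericity of the two tuples transparent through the absolute continuity of the marginals of $\Lambda$.
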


\begin{proof}

We may assume that $\sigma$ is a probability measure. Suppose that, for two positive integers $m\le n$ and some $a\in{\mathbb T}$,
\begin{equation}\label{eq:dosz}
\sigma^{\ast n}\not\perp\sigma^{\ast m}\ast \delta_a.
\end{equation}
We will show that, if $n\neq m$ or $a\neq 1$, then $C_{m+n}\colon \mathbb{T}^{m+n}\to \mathbb{T}$ is not one-to-one modulo coordinate permutations on any Borel set $F\subset\mathbb{T}^{m+n}$ with $\sigma^{\otimes (m+n)}(F)=1$, i.e. $V_{\sigma}^{\odot (m+n)}$ does not have simple spectrum.

To avoid further discussion on measurability of direct images, notice that if $A$ is a Borel set in $\mathbb{T}^k$ with $\sigma^{\otimes k}(A)=1$, it contains a $\sigma$-compact set of full measure whose image under $C_k$ is still $\sigma$-compact and carries $\sigma^{\ast k}$. Thus $C_k(A)$ is measurable, with $\sigma^{\ast k}(C_kA)=1$.

Let a Borel set $F\subset \mathbb{T}^{m+n}$ with $\sigma^{\otimes (m+n)}(F)=1$ be given and consider the sets
\begin{equation*}
A_1=\left\{x\in \mathbb{T}^m \colon  \sigma^{\otimes n}(\{y\in\mathbb{T}^n:(x,y)\in F\})=1\right\}
\end{equation*}
and
\begin{equation*}
A_2=\left\{y\in \mathbb{T}^n \colon \sigma^{\otimes m}(\{x\in\mathbb{T}^m:(x,y)\in F\})=1\right\}.
\end{equation*}
By the above observation concerning measurability, $\sigma^{\ast m}(C_mA_1)=\sigma^{\ast n}(C_nA_2)=1$ and thus~\eqref{eq:dosz} implies $C_mA_1\cap a^{-1} C_nA_2 \neq \emptyset$.

Choose $s \in C_mA_1\cap a^{-1}C_nA_2$, $x=(x_1,\dots,x_m)\in A_1$ and $y=(y_1,\dots,y_n)\in A_2$ such that 
$$
C_mx=a^{-1}C_ny=s.
$$
By definition of the sets $A_1$ and $A_2$ there exist Borel sets $B_1\subset\mathbb{T}^m$, $\sigma^{\otimes m}(B_1)=1$ and $B_2\subset\mathbb{T}^n$, $\sigma^{\otimes n}(B_2)=1$ such that 
$$
(x',y)\in F \text{ for all } x'\in B_1
$$
and 
$$
(x,y')\in F\text{ for all }y'\in B_2.
$$
Moreover, since $\sigma$ is continuous, the set of $y'\in \mathbb{T}^n$ with a given coordinate has zero $\sigma^{\otimes n}$-measure and we can restrict $B_2$ to those $y'=(y'_1,\dots,y'_n)$ which have every coordinate $y'_i$ different from every coordinate $y_j$ of $y$.

By~\eqref{eq:dosz} we get again $C_mB_1\cap a^{-1}C_nB_2\ne\emptyset$, so we can choose $s' \in C_mB_1\cap a^{-1}C_nB_2$, $x'=(x'_1,\dots,x'_m)\in B_1$ and $y'=(y'_1,\dots,y'_n)\in B_2$ with $$
C_mx'=a^{-1}C_ny'=s'.
$$
Then $(x,y')\in F,\ (x',y)\in F$, $C_{m+n}(x,y')=C_{m+n}(x',y)=a\cdot s\cdot s'$.

If $m<n$, at least one of the coordinates $y'_i$ does not appear among the $x'_j$ and by the latter assumption on $B_2$ it is not either one of the $y_j$, so $(x,y')$ is not equal to $(x',y)$ modulo a coordinate permutation, which ends the proof in this case.

If $m=n$ and $a\ne1$ then $y'$ cannot be a coordinate permutation of $x'$ since the products of the coordinates are different, and the result follows by the same argument.
\end{proof}

\begin{uw}\label{uw:bezposc}
In the above proof we have not used the fact that we deal with unitary $\mathbb{Z}$-actions. The assertion remains true for continuous unitary representations of any locally compact second countable abelian group.
\end{uw}

\begin{uw}
Let $m>1,\ k\geq 1$. For a continuous measure $\sigma$ on $\mathbb{T}$, the condition that the spectrum of $V^{\odot mk}_{\sigma}$ is simple is essentially stronger than the CS($n$) property for $n\in\mathbb{N}$ such that $(m!)^n>\frac{(mk)!}{(k!)^m}$ (Theorem~\ref{gltw}). Indeed, it suffices to take as $\sigma$ a measure on $\mathbb{T}$ such that the operator $V_{\sigma}^{\odot mk}$ has simple spectrum and consider the representation $V_{\sigma+\sigma\ast \delta_a}$ for some $a\in\mathbb{T}$, $a\neq 1$. The operator $V_{\sigma+\sigma\ast a}^{\odot mk}$ does not have simple spectrum, since this would imply that $V_{\sigma+\sigma\ast \delta_a}^{\odot 2}$ has simple spectrum, while clearly $\left(\sigma+\sigma\ast\delta_a\right)\ast \delta_a \not\perp \sigma+\sigma\ast\delta_a$ which would contradict Proposition~\ref{pr:bezposc}. On the other hand, by Corollary~\ref{wniosek}, $\sigma$ has the CS($n$) property, whence also $\sigma+\sigma\ast \delta_a$ has the CS($n$) property as a sum of two measures which have the property under consideration.

In particular, spectral simplicity of $V_{\sigma}^{\odot 2}$ is a condition which is essentially stronger than the CS property for the measure $\sigma$. 

In view of Remark~\ref{uw:bezposc}, the above discussion remains valid also for other abelian group actions - instead of measures on $\mathbb{T}$ one needs to consider measures on $\hat{\mathbb{G}}$, where $\mathbb{G}$ is the acting group.
\end{uw}

\subsection{Girsanov's theorem}
Proposition~\ref{pr:bezposc} together with methods similar to these used to prove it yield a new proof of the well-known fact that the multiplicity function of a Gaussian system is either identically equal to one or unbounded.
\begin{tw}[\cite{MR0114251}]
Let $U$ be a unitary operator whose (reduced) maximal spectral type $\sigma$ is continuous. Then the set of spectral multiplicities of $\oplus_{n=1}^{\infty}U^{\odot n}$ is either equal to $\{1\}$ or it is unbounded.
\end{tw}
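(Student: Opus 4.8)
I would separate the two mechanisms that can push the multiplicity of $\bigoplus_{n\geq1}U^{\odot n}$ above $1$, and dispose of the remaining ``simple'' situation by a direct application of Proposition~\ref{pr:bezposc}. If $U$ has simple spectrum we may assume $U=V_\sigma$ (Remark~\ref{uproste}), and $V_\sigma^{\odot1}=V_\sigma$ is simple. If moreover every $U^{\odot n}$ has simple spectrum, then Proposition~\ref{pr:bezposc}, applied with $a=1$ to each pair $n\neq m$, yields $\sigma^{\ast n}\perp\sigma^{\ast m}$; since $\sigma^{\ast n}$ is the maximal spectral type of the $n$-th summand of $\bigoplus_{n\geq1}U^{\odot n}$, the summands have pairwise singular maximal spectral types and each is simple, so $\bigoplus_{n\geq1}U^{\odot n}$ has simple spectrum and the set of its multiplicities is $\{1\}$. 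It remains to show that if $U$ is not simple, or if $U=V_\sigma$ is simple but some $V_\sigma^{\odot k_0}$ with $k_0\geq2$ is not, then the multiplicity is unbounded.

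\textbf{Non-simple $U$.} Here there are nonzero $x_1,x_2\in H$ with $\mathbb{Z}(x_1)\perp\mathbb{Z}(x_2)$ and $\sigma_{x_1}=\sigma_{x_2}=:\tau$ (a continuous measure, since $\tau\ll\sigma$). For $n\geq1$ and $0\leq a\leq n$ set $\eta_{a,n}=\mathrm{proj}_{H^{\odot n}}\!\bigl(x_1^{\otimes a}\otimes x_2^{\otimes(n-a)}\bigr)\in H^{\odot n}$. Writing $\mathrm{proj}_{H^{\odot n}}=\frac1{n!}\sum_{\pi\in S(n)}U_\pi$ and using that $\langle U^kx_1,x_2\rangle=0$ for all $k$ together with the invariance of $x_1^{\otimes a}$ and of $x_2^{\otimes(n-a)}$ under permutations within their blocks, one sees that only the permutations preserving the splitting of the $n$ slots into the first $a$ and the last $n-a$ contribute; this gives $\eta_{a,n}\neq0$, $\mathbb{Z}(\eta_{a,n})\perp\mathbb{Z}(\eta_{a',n})$ for $a\neq a'$, and $\sigma_{\eta_{a,n}}=\frac{a!\,(n-a)!}{n!}\,\tau^{\ast n}$ for every $a$. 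Hence $U^{\odot n}$ has multiplicity at least $n+1$, so $\bigoplus_{n\geq1}U^{\odot n}$ has unbounded multiplicity.

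\textbf{$U=V_\sigma$ simple, $V_\sigma^{\odot k_0}$ not simple.} Recall that $V_\sigma^{\odot k}$ has multiplicity at most $p$ at $\sigma^{\ast k}$-a.e.\ point if and only if there is a Borel $F\subseteq\mathbb{T}^k$ with $\sigma^{\otimes k}(F)=1$ every fibre of which meets $F$ in at most $p$ coordinate-permutation classes (for $p=1$ this is exactly the characterization of simplicity used in the proof of Lemma~\ref{vproste}; the general case follows the same way, cf.\ Proposition~\ref{tw42} and Remark~\ref{uw:nowa}). I would prove by induction on $r\geq1$ the statement: \emph{for every Borel $E\subseteq\mathbb{T}^{rk_0}$ with $\sigma^{\otimes rk_0}(E)=1$ there are $2^r$ pairwise non-coordinate-permutation-equivalent points of $E$ having the same product of coordinates.} By the recalled characterization (take $E=F$) this forces $V_\sigma^{\odot rk_0}$ to have multiplicity at least $2^r$ for every $r$, whence $\bigoplus_{n\geq1}V_\sigma^{\odot n}$ has unbounded multiplicity. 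The case $r=1$ is precisely the characterization of non-simplicity of $V_\sigma^{\odot k_0}$. For the step, identify $\mathbb{T}^{rk_0}$ with $\mathbb{T}^{k_0}\times\mathbb{T}^{(r-1)k_0}$, let $F$ be the set of $x\in\mathbb{T}^{k_0}$ whose section $\{y:(x,y)\in E\}$ has full $\sigma^{\otimes(r-1)k_0}$-measure (so $\sigma^{\otimes k_0}(F)=1$ by Fubini), and use non-simplicity of $V_\sigma^{\odot k_0}$ on $F$ to get non-equivalent $\beta,\beta'\in F$ with $C_{k_0}(\beta)=C_{k_0}(\beta')$. Let $E'$ be the set of $y\in\mathbb{T}^{(r-1)k_0}$ with $(\beta,y)\in E$, $(\beta',y)\in E$ and no coordinate of $y$ equal to any coordinate of $\beta$ or $\beta'$; since $\sigma$ is continuous the last requirement removes a null set, so $\sigma^{\otimes(r-1)k_0}(E')=1$. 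Applying the inductive hypothesis to $E'$ and prepending $\beta$ or $\beta'$ produces $2\cdot2^{r-1}=2^r$ points of $E$ with a common product of coordinates, pairwise non-equivalent because $\beta,\beta'$ are non-equivalent and their coordinates are disjoint from those of the tails.

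\textbf{The main obstacle} is the coordinate-permutation bookkeeping in the inductive step (and, analogously, in the non-simple-$U$ case): one has to guarantee that prepending $\beta$ or $\beta'$ to a ``generic'' tail cannot be undone by any permutation of all $rk_0$ coordinates, which forces one to discard, at each step, the finitely many ``coordinate-coincidence'' sets — legitimate precisely because $\sigma$ is continuous, exactly as in the proofs of Lemma~\ref{vproste}, Proposition~\ref{pr:bezposc} and Theorem~\ref{gltw}. Once this is arranged, both inductions are routine.
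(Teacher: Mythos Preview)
Your argument is correct and follows essentially the same route as the paper: Proposition~\ref{pr:bezposc} disposes of the case where every $U^{\odot n}$ is simple, and otherwise one manufactures ever-larger families of permutation-inequivalent fibre points by concatenating lower-level configurations and invoking continuity of $\sigma$ to preclude coordinate coincidences. The only differences are cosmetic --- the paper squares the multiplicity bound at each step ($q\to q^2$ on passing from $U^{\odot n}$ to $U^{\odot 2n}$) where you merely double it ($2^{r-1}\to 2^r$), and you handle non-simple $U$ by a separate direct construction, which is arguably tidier since the paper's fibre characterization is written for $V_\sigma$ and so tacitly presupposes simplicity of $U$.
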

\begin{proof}
Again, we may assume that $\sigma$ is a probability measure. Suppose that the spectrum of $\oplus_{n=1}^{\infty}U^{\odot n}$ is not simple. By Proposition~\ref{pr:bezposc}, there exists $n\geq 1$ such that the spectrum of $U^{\odot n}$ is not simple. Assume that for some $n\ge 1$ the spectral multiplicity of $U^{\odot n}$ is at least equal to some integer $q\ge 2$. This means that for every measurable set $F\subset \mathbb{T}^n$ of full $\sigma^{\otimes n}$-measure we can find $s\in\mathbb{T}$ and $q$ points
\begin{equation*}
{x}_i=(x_i^1,\dots,x_i^n)\in F\ \ (1\leq i\leq q)
\end{equation*}
so that $x_i^1\cdot\ldots\cdot x_i^n=s$ for $1\leq i\leq q$ but none of the points ${x}_i$ can be obtained from another one in this set by coordinate permutation.

Let $E$ be any Borel set in $\mathbb{T}^{2n}$ with $\sigma^{\otimes 2n}(E)=1$ and
\begin{equation*}
A=\left\{{x}\in\mathbb{T}^n\colon \sigma^{\otimes n}\{{y}\in\mathbb{T}^n\colon ({x},{y})\in E\}=1\right\}.
\end{equation*}
We have
\begin{equation*}
\sigma^{\otimes n}(A)=1.
\end{equation*}

Let $s\in\mathbb{T}$ and ${x}_i=(x_i^1,\dots,x_i^n)\in A$ $(1\le i\le q)$ be such that $x_i^1\cdot\ldots\cdot x_i^n=s$ for $1\leq i\leq q$ and none of the points ${x}_i$ can be obtained from another one in this set by coordinate permutation. Let
\begin{equation*}
B=\left\{{y}\in\mathbb{T}^n \colon ({x}_i,{y})\in E \text{ for all }1\leq i\leq q\right\}.
\end{equation*}
Notice that $\sigma^{\otimes n}(B)=1$ and, since $\sigma$ is continuous, we can assume without loss of generality that the coordinates of points in $B$ are different from all of the coordinates of the points ${x}_i$ for $1\leq i\leq q$. 

Let $s'\in\mathbb{T}$ and ${y}_j\in B$ $(1\leq j\leq q)$ be such that $y_j^1\cdot\ldots\cdot y_j^n=s'$ for $1\leq j\leq q$ and none of the points ${y}_j$ can be obtained from another one in this set by coordinate permutation. Then
\begin{equation*}
({x}_i,{y}_j)\in E
\end{equation*}
and
\begin{equation*}
x_i^1\cdot\ldots\cdot x_i^n\cdot y_j^1\cdot\ldots\cdot y_j^n=s\cdot s'
\end{equation*}
for $1\leq i,j\leq q$. Moreover none of the points $({x}_i,{y}_j)$ for $1\leq i,j\leq q$ can be obtained from another one from this set by a permutation of coordinates. Therefore the spectral multiplicity of the operator $U^{\odot 2n}$ is at least $q^2$. It follows inductively that the spectral multiplicity function of $\oplus_{n=1}^{\infty}U^{\odot n}$ is unbounded, which completes the proof.
\end{proof}

\section{CS($n$) implies JP($n-1$)}\label{sekcja}
Let $n\geq 1$, let $T$ be an ergodic automorphism acting on $(X,\mathcal{B},\mu)$ and $S_i$ be weakly mixing automorphisms acting respectively on $(Y_i,\mathcal{C}_i,\nu_i)$ for $1\leq i \leq n$. Notice that 
\begin{equation}\label{eq:lm:20:1-pr}
L^2_0(Y_1\times \dots \times Y_n)= \bigoplus_{1\leq k\leq n}\, \bigoplus_{1\leq i_1<\dots <i_k\leq n}L_{i_1,\dots,i_k}^Y,
\end{equation}
where for $1\leq i_1<\dots <i_k\leq n$
\begin{equation*}
L_{i_1,\dots,i_k}^Y=\otimes_{j=1}^k L^2_0(Y_{i_j}).
\footnote{For $k\geq 1$ we treat the elements of $L^Y_{i_1,\dots,i_k}$ as functions of $n$ variables.\label{traktujemy}}
\end{equation*}
For $1\leq i_1<\dots<i_k\leq n$ let
$$p_{i_1,\dots,i_k}\colon L^2(Y_1\times\dots\times Y_n)\to L^2(Y_{i_1}\times\dots\times Y_{i_k})$$
and
$$p_0\colon L^2(Y_1\times\dots\times Y_n)\to \mathbb{C}$$ 
stand for the orthogonal projections.

Recall the following well-known fact.
\begin{lm}\label{lm:kk}
Given $T\colon (X,\mathcal{B},\mu)\to(X,\mathcal{B},\mu)$, $S\colon (Y,\mathcal{C},\nu)\to (Y,\mathcal{C},\nu)$, a factor of $\mathcal{A}\subset \mathcal{C}$ of $S$ and $\lambda\in J(T)$, $proj_{L^2(\mathcal{A})}\circ \Phi_\lambda$ is the Markov operator corresponding to the relatively independent extension of $\lambda|_{\mathcal{B}\otimes \mathcal{A}}$.
\end{lm}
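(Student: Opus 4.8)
The plan is to unwind the definition of the relatively independent extension in the language of Markov operators. Set $\lambda':=\lambda|_{\mathcal{B}\otimes\mathcal{A}}$; since $\mathcal{A}$ is an $S$-factor, $\mathcal{B}\otimes\mathcal{A}$ is $T\times S$-invariant and the marginals of $\lambda'$ are $\mu$ and $\nu|_{\mathcal{A}}$, so $\lambda'$ is a joining of $T$ with $S|_{\mathcal{A}}$ and its relatively independent extension $\widehat{\lambda'}\in J(T,S)$ is defined, with associated Markov operator $\Phi_{\widehat{\lambda'}}\colon L^2(X)\to L^2(Y,\mathcal{C})$. The claim is $\Phi_{\widehat{\lambda'}}=proj_{L^2(\mathcal{A})}\circ\Phi_\lambda$; once this is shown, the right-hand side is automatically the Markov operator of $\widehat{\lambda'}$. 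To prove the equality I would verify that $\int (proj_{L^2(\mathcal{A})}\Phi_\lambda(f))(y)\,g(y)\,d\nu(y)=\int f(x)g(y)\,d\widehat{\lambda'}(x,y)$ for $f$ in a dense subset of $L^2(X)$ and $g$ in a dense subset of $L^2(Y,\mathcal{C})$, which by the Markov-operator correspondence recalled in Section~\ref{sec:def} characterizes $\Phi_{\widehat{\lambda'}}$.

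The first step is to establish, straight from the defining formula of $\widehat{\lambda'}$, the identity $\int f(x)g(y)\,d\widehat{\lambda'}(x,y)=\int f(x)\,E(g|\mathcal{A})(y)\,d\lambda'(x,y)$: it holds for $f=\1_B$, $g=\1_C$ by definition, and extends to all bounded measurable (hence $L^2$) $f,g$ by bilinearity and a routine monotone-class / dominated-convergence argument. The second, and key, step is to recognize that the conditional expectation $E(\cdot|\mathcal{A})$ restricted to $L^2(\nu)$ is precisely the orthogonal projection $proj_{L^2(\mathcal{A})}$ onto $L^2(Y,\mathcal{A},\nu|_{\mathcal{A}})\subseteq L^2(Y,\mathcal{C},\nu)$, and that since $E(g|\mathcal{A})$ is $\mathcal{A}$-measurable, integrating it against $\lambda'$ is the same as integrating it against $\lambda$ — this is exactly what restricting $\lambda$ to $\mathcal{B}\otimes\mathcal{A}$ encodes. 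Combining these with the defining relation for $\Phi_\lambda$ and self-adjointness of the projection,
$$\int f(x)g(y)\,d\widehat{\lambda'}=\int \Phi_\lambda(f)(y)\cdot(proj_{L^2(\mathcal{A})}g)(y)\,d\nu(y)=\int (proj_{L^2(\mathcal{A})}\Phi_\lambda(f))(y)\cdot g(y)\,d\nu(y),$$
which is the desired equality, so $\Phi_{\widehat{\lambda'}}=proj_{L^2(\mathcal{A})}\circ\Phi_\lambda$.

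I do not anticipate a genuine obstacle here; the only points requiring a little care are the measure-theoretic passage of the defining identity from rectangles to all of $L^2$ (standard), and the bookkeeping that $E(\cdot|\mathcal{A})$ agrees with the Hilbert-space projection onto $L^2(\mathcal{A})$ and that this projection is self-adjoint. As a side remark one should note that $\widehat{\lambda'}$ is indeed $T\times S$-invariant, which uses that $\mathcal{A}$ being $S$-invariant gives $E(g\circ S|\mathcal{A})=E(g|\mathcal{A})\circ S$ and hence that $L^2(\mathcal{A})$ is $U_S$-invariant, so $proj_{L^2(\mathcal{A})}$ commutes with $U_S$; but this belongs to the joinings setup already recalled before the lemma and is consistent with $\Phi_{\widehat{\lambda'}}\circ U_T=U_S\circ\Phi_{\widehat{\lambda'}}$.
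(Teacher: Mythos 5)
Your argument is correct and coincides with the paper's own proof: both compute the bilinear form $\int proj_{L^2(\mathcal{A})}\Phi_\lambda(\1_B)\cdot\1_C\,d\nu$, move the (self-adjoint) projection onto the second factor, identify $proj_{L^2(\mathcal{A})}\1_C$ with $E(\1_C|\mathcal{A})$, and recognize the resulting integral $\int \1_B(x)E(\1_C|\mathcal{A})(y)\,d\lambda(x,y)$ as the defining formula of the relatively independent extension of $\lambda|_{\mathcal{B}\otimes\mathcal{A}}$. Your added remarks on density, the agreement of conditional expectation with the Hilbert-space projection, and the $T\times S$-invariance of the extension are just the routine details the paper leaves implicit.
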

\begin{proof}
Let $B\in \mathcal{B}, C\in\mathcal{C}$. Since the orthogonal projection is a self-conjugate operator, looking at the scalar product we have
\begin{multline*}
\int_Y proj_{L^2(\mathcal{A})} \circ \Phi_\lambda (\1_{B})(x)\1_{C}(x)\ d\mu\\
=\int_Y \Phi_\lambda (\1_{B})(x)\ proj_{L^2(\mathcal{A})}(\1_{C})(x)\ d\mu(x)\\
= \int_{X\times Y}\1_{B}(x)\ E(\1_C|\mathcal{A})(y)\ d\lambda (x,y).
\end{multline*}
\end{proof}
We will use the above lemma in the situation where $S$ is a direct product of its factor $S|_\mathcal{A}$ with some other transformation.

\begin{lm}\label{lm:lemat}
Let $\lambda \in J^e(T,S_1\times \dots \times S_n)$. If
\begin{equation*}
\Phi_{\lambda}(L^2(X,\mathcal{B},\mu)) \perp L^Y_{1,\dots,n}
\end{equation*}
then there exists $1\leq i\leq n$ such that
\begin{equation*}
\lambda=\lambda_{X,Y_1,\dots,Y_{i-1},Y_{i+1},\dots,Y_n}\otimes \nu_i.
\end{equation*}
\end{lm}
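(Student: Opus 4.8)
The plan is to combine the orthogonal decomposition~\eqref{eq:lm:20:1-pr} with Lemma~\ref{lm:kk} so as to express $\lambda$ as a signed sum of \emph{ergodic} joinings, and then to conclude by uniqueness of the ergodic decomposition. For $R\subseteq\{1,\dots,n\}$ let $\mathcal{A}_R$ denote the factor of $S_1\times\dots\times S_n$ generated by the coordinates in $R$, write $\lambda_{X,R}$ for the projection of $\lambda$ onto $X\times\prod_{j\in R}Y_j$, and set $\lambda_R:=\lambda_{X,R}\otimes\bigotimes_{j\notin R}\nu_j$, so that $\lambda_{\{1,\dots,n\}}=\lambda$ and $\lambda_\emptyset=\mu\otimes\nu_1\otimes\dots\otimes\nu_n$. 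Since $S_1\times\dots\times S_n=(S_1\times\dots\times S_n)|_{\mathcal{A}_R}\times\prod_{j\notin R}S_j$ is a direct product and the measure $\nu_1\otimes\dots\otimes\nu_n$ splits accordingly, a direct computation from the definition of the relatively independent extension gives $\widehat{\lambda|_{\mathcal{B}\otimes\mathcal{A}_R}}=\lambda_R$, so by Lemma~\ref{lm:kk}
\[
proj_{L^2(\mathcal{A}_R)}\circ\Phi_\lambda=\Phi_{\lambda_R}\qquad\text{for every }R\subseteq\{1,\dots,n\}.
\]
By~\eqref{eq:lm:20:1-pr}, $L^2(\mathcal{A}_R)$ is the orthogonal direct sum of the constants and of the subspaces $L^Y_{i_1,\dots,i_k}$ with $\{i_1,\dots,i_k\}\subseteq R$; in particular $L^Y_{1,\dots,n}$ is the only such subspace contained in no proper $\mathcal{A}_R$.

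A straightforward inclusion--exclusion on the subset lattice of $\{1,\dots,n\}$ then shows that $\sum_{R\subseteq\{1,\dots,n\}}(-1)^{n-|R|}\,proj_{L^2(\mathcal{A}_R)}$ equals the orthogonal projection onto $L^Y_{1,\dots,n}$. Composing with $\Phi_\lambda$ and invoking the hypothesis $\Phi_\lambda(L^2(X))\perp L^Y_{1,\dots,n}$, we get $\sum_{R\subseteq\{1,\dots,n\}}(-1)^{n-|R|}\Phi_{\lambda_R}=0$; isolating the term $R=\{1,\dots,n\}$ (where $\lambda_R=\lambda$) and using that $\Phi\mapsto\lambda_\Phi$ is affine yields, after separating positive and negative coefficients,
\[
\lambda+\sum_{\substack{R\subsetneq\{1,\dots,n\}\\ n-|R|\ \mathrm{even}}}\lambda_R=\sum_{\substack{R\subsetneq\{1,\dots,n\}\\ n-|R|\ \mathrm{odd}}}\lambda_R ,
\]
both sides being sums of $2^{n-1}$ probability measures, hence of equal total mass.

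Now I would use ergodicity. For each $R$, the marginal $\lambda_{X,R}$, being a factor image of the ergodic joining $\lambda$, is ergodic, and $\prod_{j\notin R}S_j$ with the measure $\bigotimes_{j\notin R}\nu_j$ is weakly mixing (a finite product of weakly mixing automorphisms); since the product of an ergodic system with a weakly mixing one is ergodic, $\lambda_R$ is ergodic. The same applies to $\lambda_\emptyset=\mu\otimes\nu_1\otimes\dots\otimes\nu_n$ ($T$ ergodic, $S_1\times\dots\times S_n$ weakly mixing), and $\lambda$ is ergodic by assumption. Dividing the displayed identity by $2^{n-1}$, the two sides become two ergodic decompositions of the same probability measure, so by uniqueness of the ergodic decomposition the two multisets of ergodic measures coincide; since $\lambda$ occurs on the left, it must occur on the right, i.e.\ $\lambda=\lambda_R$ for some $R\subsetneq\{1,\dots,n\}$. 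Picking any $i\notin R$, projecting the equality $\lambda=\lambda_{X,R}\otimes\bigotimes_{j\notin R}\nu_j$ onto the coordinates different from $i$ and then tensoring again by $\nu_i$ gives $\lambda=\lambda_{X,Y_1,\dots,Y_{i-1},Y_{i+1},\dots,Y_n}\otimes\nu_i$, which is the assertion.

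I expect the main point to be bookkeeping rather than anything conceptual --- running the inclusion--exclusion without sign errors and, above all, checking carefully that every building block $\lambda_R$ is a genuinely \emph{ergodic} joining, which is exactly where the weak mixing of the $S_i$ enters. Once that is secured, uniqueness of the ergodic decomposition does the rest.
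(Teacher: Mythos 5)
Your proof is correct and follows essentially the same route as the paper: an inclusion--exclusion over the coordinate sub-$\sigma$-algebras combined with Lemma~\ref{lm:kk} to identify each term as a relatively independent extension, ergodicity of every term via weak mixing of the $S_i$, and uniqueness of the ergodic decomposition to force $\lambda$ to equal one of the product terms. The only (cosmetic) difference is that you expand the projection onto $L^Y_{1,\dots,n}$ directly over all subsets $R$ and balance the two sides explicitly, whereas the paper expands $Id-p$; the substance is identical.
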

\begin{proof}
Let $p=proj_{L^Y_{1,\dots,n}}$. Then
$$
p=\otimes_{i=1}^{n}(Id - q_i),
$$
where $q_i\colon L^2(Y_i)\to \mathbb{C}$, $1\leq i\leq n$ stands for the orthogonal projection. Notice that for $1\leq i_1<\dots<i_l\leq n$
$$
q_{i_1}\otimes \dots \otimes q_{i_l}=p_{j_1,\dots,j_{n-l}},
$$
where $\{j_1,\dots,j_{n-l}\}=\{1,\dots,n\}\setminus \{i_1,\dots,i_l\}$. We have
\begin{multline*}
proj_{(L^Y_{1,\dots,n})^\perp} = Id - p = Id - \otimes_{i=1}^{n}(Id-q_i)\\
= \sum_{l=1}^{n}(-1)^{l-1}\sum_{1\leq i_1<\dots i_l\leq n}q_{i_1}\otimes \dots\otimes q_{i_l}\\
=(-1)^{n-1}p_0+ \sum_{k=0}^{n-1} (-1)^{n-k-1}\!\!\!\sum_{1\leq i_1<\dots<i_k\leq n}p_{i_1,\dots,i_k}.
\end{multline*}

By the assumption that $\Phi_{\lambda}(L^2(X))\perp L^Y_{1,\dots,n}$,
it follows that
\begin{equation*}
\Phi_{\lambda}=(-1)^{n-1}p_0 \circ\Phi_{\lambda}+\sum_{k=0}^{n-1} (-1)^{n-k-1}\!\!\!
\sum_{1\leq i_1<\dots<i_k\leq n} p_{i_1,\dots,i_k}\circ\Phi_{\lambda}.
\end{equation*}
By Lemma~\ref{lm:kk} the Markov operator $p_{i_1,\dots,i_k} \circ \Phi_{\lambda}$ corresponds to the relatively independent extension of $\lambda_{X,Y_{i_1},\dots,Y_{i_k}}$, i.e. $\lambda_{X, Y_{i_1},\dots, Y_{i_k}} \otimes \nu_{j'_1}\otimes \dots \otimes \nu_{j'_{n-k}}$ where $\{j'_1,\dots,j'_{n-k}\}=\{1,\dots,n\}\setminus\{i_1,\dots,i_k\}$.
Hence
\begin{multline}\label{eq:mmm}
\lambda=(-1)^{n-1} \nu_1\otimes \dots \otimes \nu_n\\
+\sum_{k=0}^{n-1} (-1)^{n-k-1}\!\!\!\sum_{1\leq i_1<\dots<i_k\leq n} \lambda_{X,Y_{i_1},\dots, Y_{i_k}} \otimes \nu_{j'_1}\otimes \dots \otimes \nu_{j'_{n-k}}.
\end{multline}
All the joinings appearing in the above expression are ergodic:
\begin{itemize}
\item
$\lambda$ - as a member of $J^e(R,S_1\times \dots \times S_n)$ by assumption,
\item
$\lambda_{X,Y_{i_1},\dots, Y_{i_k}}$ - as a projection of the ergodic measure $\lambda$,
\item
$\lambda_{X,Y_{i_1},\dots, Y_{i_k}} \otimes \nu_{j_1}\otimes \dots \otimes \nu_{j_{n-k}}$ - by the assumption that the $S_i$ are weakly mixing.
\end{itemize}
Now we write~\eqref{eq:mmm} as an equality between sums of ergodic joinings. By the uniqueness of the ergodic decomposition, $\lambda$ is equal to one of the other measures which completes the proof.
\end{proof}

\begin{tw}\label{tw:4}
Let $T$ be an ergodic automorphism of a standard probability Borel space $(X,\mathcal{B},\mu)$. Whenever it enjoys the CS($n$) property for some $n\geq 2$, it also enjoys the JP($n-1$) property.
\end{tw}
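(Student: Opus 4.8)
The plan is to deduce the theorem from Lemma~\ref{lm:lemat} by a short spectral computation. By the Remark in Section~\ref{sec:def} (see~\cite{MR2729082}) it suffices to verify the defining property of JP($n-1$) in the case $k=n$. So fix weakly mixing automorphisms $S_i\colon(Y_i,\mathcal{C}_i,\nu_i)\to(Y_i,\mathcal{C}_i,\nu_i)$, $1\le i\le n$, and an ergodic joining $\lambda\in J^e(T,S_1\times\dots\times S_n)$; let $\Phi=\Phi_\lambda\colon L^2(X,\mathcal{B},\mu)\to L^2(Y_1\times\dots\times Y_n)$ be the associated Markov operator, so that $\Phi\circ U_T=U_{S_1\times\dots\times S_n}\circ\Phi$. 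By Lemma~\ref{lm:lemat} it is enough to show that $\Phi(L^2(X,\mathcal{B},\mu))\perp L^Y_{1,\dots,n}$, where $L^Y_{1,\dots,n}=\otimes_{j=1}^n L^2_0(Y_j)$.

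Write $\tau_j$ for the reduced maximal spectral type of $S_j$; since $S_j$ is weakly mixing, $\tau_j$ is continuous. The subspace $L^Y_{1,\dots,n}$ reduces the unitary operator $U_{S_1\times\dots\times S_n}$, and the restriction of $U_{S_1\times\dots\times S_n}$ to it is $\otimes_{j=1}^n\bigl(U_{S_j}|_{L^2_0(Y_j)}\bigr)$, whose maximal spectral type is accordingly the convolution $\tau_1\ast\dots\ast\tau_n$ of $n$ continuous measures. Let $P$ be the orthogonal projection onto $L^Y_{1,\dots,n}$; since this subspace reduces $U_{S_1\times\dots\times S_n}$, $P$ commutes with $U_{S_1\times\dots\times S_n}$, so $\Psi:=P\circ\Phi$ intertwines $U_T$ with $U_{S_1\times\dots\times S_n}|_{L^Y_{1,\dots,n}}$. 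Now take any $f\in L^2(X,\mathcal{B},\mu)$. Since a bounded operator intertwining two unitary operators cannot enlarge the spectral type of a vector, the spectral measure $\sigma_{\Psi f}$ satisfies $\sigma_{\Psi f}\ll\sigma_f\ll\sigma_T$, where $\sigma_T$ is the maximal spectral type of $T$; at the same time $\Psi f\in L^Y_{1,\dots,n}$ forces $\sigma_{\Psi f}\ll\tau_1\ast\dots\ast\tau_n$. By the CS($n$) property of $T$, $\sigma_T\perp\tau_1\ast\dots\ast\tau_n$, so $\sigma_{\Psi f}$ is singular with respect to itself; hence $\sigma_{\Psi f}=0$ and $\Psi f=0$. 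As $f$ was arbitrary, $\Phi(L^2(X,\mathcal{B},\mu))\perp L^Y_{1,\dots,n}$, and Lemma~\ref{lm:lemat} supplies an index $1\le i\le n$ with $\lambda=\lambda_{X,Y_1,\dots,Y_{i-1},Y_{i+1},\dots,Y_n}\otimes\nu_i$ --- exactly the conclusion of JP($n-1$) for $k=n$.

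There is no serious obstacle: once the ingredients are in place the argument is two lines of spectral calculus. The only point that needs a moment's thought is that CS($n$) is stated for the full maximal spectral type $\sigma_T$, which carries the atom at $1$ coming from the constants; this is harmless, because $\sigma_{\Psi f}$ is in any case dominated by the continuous measure $\tau_1\ast\dots\ast\tau_n$, so the atom plays no role. One also uses, routinely, that the maximal spectral type of a tensor product of unitary operators is the convolution of their maximal spectral types, and the standard fact that composing a Markov operator (or any bounded intertwiner) with the orthogonal projection onto a reducing subspace does not increase spectral types.
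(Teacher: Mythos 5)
Your proposal is correct and follows essentially the same route as the paper: both reduce to verifying the hypothesis of Lemma~\ref{lm:lemat} by observing that for every $f$ the spectral measure of $\Phi_\lambda(f)$ is dominated by $\sigma_T$, while the spectral type of $S_1\times\dots\times S_n$ on $L^Y_{1,\dots,n}$ is the convolution of the $n$ continuous reduced types, so CS($n$) forces $\Phi_\lambda(L^2(X))\perp L^Y_{1,\dots,n}$. The only difference is that you spell out the projection step ($\Psi=P\circ\Phi$) and the $k=n$ reduction explicitly, which the paper leaves implicit.
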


\begin{proof}
Let $T \colon (X,\mathcal{B},\mu)\to (X,\mathcal{B},\mu)$ satisfy the assumptions of the theorem. Let $n\geq 2$ and let $\lambda \in J^e(T,S_1\times \dots \times S_n)$ for some weakly mixing automorphisms $S_1,\dots,S_n$. Recall that for any $f\in L^2(X,\mu)$ we have
\begin{equation*}
\sigma_{\Phi_{\lambda}(f)} \ll \sigma_f \ll \sigma_T
\end{equation*}
(see e.g.~\cite{MR1784644}) and that the maximal spectral type of $S_1\times\dots \times S_n$ on $L^Y_{1,\dots,n}$ is the convolution of the maximal spectral types of the automorphisms $S_1,\dots,S_n$ on $L^2_0(Y_1),\dots,L^2_0(Y_n)$ respectively. Using the decomposition~\eqref{eq:lm:20:1-pr} of $L^2_0(Y_1\times \dots \times Y_n)$ and the assumption that $\sigma_T \perp \mu_1 \ast \dots \ast \mu_n$ for any continuous measures $\mu_1,\dots,\mu_n$ we obtain
\begin{equation*}
\Phi_{\lambda}(L^2(X,\mathcal{B},\mu)) \perp L^Y_{1,\dots,n}.
\end{equation*}
Therefore the assumption of Lemma~\ref{lm:lemat} is fulfilled, which completes the proof.
\end{proof}

%\section*{Problem 2}
%
%In~\cite{MR2729082} there is an example of a JP($2$) system which is not in the class JP($1$). For $n\geq 3$ we show that JP($n-1$)$\subsetneq$ JP($n$). 
%
%\begin{qu}
%Is it possible to find examples of systems from the class JP($n$)$\setminus$JP($n-1$) using other methods than these from Section 5?%~\ref{se:5}?
%\end{qu}
%

%tutaj

\section{JP($n-1$)$\subsetneq$ JP($n$) for $n\geq 2$}\label{se:5}
In this section we will show that $\textrm{JP(}n-1\textrm{)}\neq \textrm{JP(}n\textrm{)}$ for $n\geq 2$ by giving examples of automorphisms which are in JP($n$) but not in JP($n-1$). 

\begin{lm}\label{lm:lemac}
Let $T\colon (X,\mathcal{B},\mu)\to(X,\mathcal{B},\mu)$ be a weakly mixing automorphism. Then $T^{\times n}\notin \textrm{JP(}n-1\textrm{)}$ for $n\geq 2$.
\end{lm}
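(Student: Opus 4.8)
The plan is to disprove JP($n-1$) for $T^{\times n}$ by producing one explicit ergodic joining of $T^{\times n}$ with a Cartesian product of $n$ weakly mixing automorphisms that is \emph{not} an independent extension of a joining with only $n-1$ of the coordinate factors. Since $n = (n-1)+1$, using exactly $k=n$ coordinate factors is permitted by the definition of JP($n-1$), and we are moreover free to take the factors isomorphic; accordingly I would set $S_1=\dots=S_n=T$, each acting on its own copy $(Y_i,\mathcal{C}_i,\nu_i)=(X,\mathcal{B},\mu)$.

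First I would introduce the diagonal joining. Write $X^n=X_1\times\dots\times X_n$ for the space carrying $T^{\times n}$, and let $\Delta$ be the push-forward of $\mu^{\otimes n}$ under the map $X^n\to X^n\times(Y_1\times\dots\times Y_n)$, $(x_1,\dots,x_n)\mapsto(x_1,\dots,x_n,x_1,\dots,x_n)$; thus under $\Delta$ the $i$-th source coordinate $X_i$ agrees with the $i$-th coordinate factor $Y_i$. Since the identity map of $X^n$ intertwines $T^{\times n}$ with itself, $\Delta$ is $\bigl(T^{\times n}\times(T\times\dots\times T)\bigr)$-invariant with the correct marginals, so $\Delta\in J(T^{\times n},T\times\dots\times T)$. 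Because $T$ is weakly mixing, $T^{\times n}$ is weakly mixing, hence ergodic; and since the first projection is an isomorphism of $(X^n\times(Y_1\times\dots\times Y_n),\Delta,T^{\times n}\times(T\times\dots\times T))$ onto $(X^n,\mu^{\otimes n},T^{\times n})$, the joining $\Delta$ is ergodic: $\Delta\in J^e(T^{\times n},T\times\dots\times T)$.

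Then I would derive a contradiction from $T^{\times n}\in\mathrm{JP}(n-1)$. Applied to $\lambda=\Delta$ it would yield indices $1\le i_1<\dots<i_{n-1}\le n$ and, writing $j$ for the unique index in $\{1,\dots,n\}\setminus\{i_1,\dots,i_{n-1}\}$,
\begin{equation*}
\Delta=\Delta_{X^n,Y_{i_1},\dots,Y_{i_{n-1}}}\otimes\nu_j .
\end{equation*}
In particular the coordinate factor $Y_j$ would be $\Delta$-independent of the whole source factor $X^n$. But under $\Delta$ the factor $Y_j$ coincides with the $j$-th source coordinate $X_j$, a non-trivial factor of $X^n$ (here one uses that $(X,\mathcal{B},\mu)$ is non-trivial, i.e. $L^2_0(X)\neq\{0\}$, which is part of ``weakly mixing''); and a non-trivial sub-$\sigma$-algebra cannot be independent of itself, since $\mu(A)=\mu(A\cap A)=\mu(A)^2$ forces $\mu(A)\in\{0,1\}$. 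This contradiction shows $T^{\times n}\notin\mathrm{JP}(n-1)$.

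The argument is essentially immediate once the diagonal joining is written down; the only points requiring a line of care are that the number $k=n$ of coordinate factors used is indeed allowed by the definition of JP($n-1$) (which demands $k\ge n$), and that $\Delta$ is genuinely ergodic — this is where weak mixing of $T$, and not merely ergodicity, is needed, through the ergodicity of $T^{\times n}$. There is no substantial obstacle.
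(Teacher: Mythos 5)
Your proposal is correct and is exactly the paper's argument: the paper's proof consists of the single observation that the diagonal self-joining $\Delta$ of $T^{\times n}$ (viewed as a joining with $S_1\times\dots\times S_n$, $S_i=T$) witnesses the failure of JP($n-1$). You have merely spelled out the details the paper leaves implicit — ergodicity of $\Delta$ via weak mixing of $T^{\times n}$, and the impossibility of the coordinate $Y_j$ being independent of $X^n$ when $\Delta$ identifies it with $X_j$ — all of which are fine.
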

\begin{proof}
Let $n\geq 2$. To see that the assertion is true, it suffices to consider the diagonal joining $\Delta$ of  $T^{\times n}$ with itself.\footnote{Given an automorphism $S\colon (Y,\mathcal{C},\nu)\to (Y,\mathcal{C},\nu)$ the diagonal self-joining $\Delta$ is defined by the formula $\Delta(A\times B)=\nu(A\cap B)$.}

\end{proof}
As a direct consequence of Corollary~\ref{wn:prostygausstocsn} and Theorem~\ref{tw:4} we obtain the following.
\begin{wn}\label{wni}
For any $n\geq 1$ and any weakly mixing automorphism $T$ whose reduced maximal spectral type $\sigma$ generates a Gaussian system with simple spectrum, the Cartesian product $T^{\times n}$ of $n$ copies of $T$ has the JP($n$) property. In particular, for any $n\geq 1$ the Cartesian product of $n$ copies of a typical automorphism (with respect to the weak operator topology) has the JP($n$) property.
\end{wn}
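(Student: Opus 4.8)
The plan is to chain together the two results quoted immediately before the statement, so the proof is essentially a bookkeeping argument on the orders involved. First I would note that, since $T$ is weakly mixing, the Cartesian product $T^{\times n}$ is weakly mixing as well, in particular ergodic, so that it is a legitimate object to which the JP property can be applied. Next I would invoke Corollary~\ref{wn:prostygausstocsn} verbatim: its hypothesis is precisely that $\sigma$, the reduced maximal spectral type of $T$, generates a Gaussian system with simple spectrum (equivalently, $V_\sigma^{\odot k}$ has simple spectrum for every $k\geq1$), and its conclusion is that $T^{\times n}$ enjoys the CS($n+1$) property.

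Then I would apply Theorem~\ref{tw:4} to the ergodic automorphism $T^{\times n}$, but with the index $n+1$ in the role of the ``$n$'' of that theorem. Since $n\geq1$ we have $n+1\geq2$, so the standing hypothesis of Theorem~\ref{tw:4} is satisfied, and the CS($n+1$) property of $T^{\times n}$ yields the JP($(n+1)-1$)$=$JP($n$) property, which is exactly the assertion. For the ``in particular'' clause I would appeal to Remark~\ref{uwazka}: a typical automorphism (with respect to the weak operator topology) is weakly mixing by Halmos and has reduced maximal spectral type generating a Gaussian system with simple spectrum by Ageev, so it falls under the first part of the corollary. Alternatively, one may bypass Corollary~\ref{wn:prostygausstocsn} in this case and combine Corollary~\ref{wn:wnioseczek}, which already records the CS($n+1$) property for Cartesian products of $n$ copies of a typical automorphism, directly with Theorem~\ref{tw:4}.

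I do not expect any genuine obstacle here: the statement is explicitly flagged as a direct consequence, and the only points deserving a word are the ergodicity of $T^{\times n}$ (immediate from weak mixing of $T$) and the shift by one between the order of the convolution-singularity property and the order of the resulting joining-primeness property, which must be tracked carefully but causes no difficulty. If anything, the one thing I would double-check is that the case $n=1$ is not degenerate: there $T^{\times1}=T$, Corollary~\ref{wn:prostygausstocsn} gives the CS($2$) property, and Theorem~\ref{tw:4} (applied with its index equal to $2$) returns JP($1$), recovering the known result that such $T$ lies in JP($1$).
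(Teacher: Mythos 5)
Your argument is exactly the paper's: Corollary~\ref{wn:prostygausstocsn} gives the CS($n+1$) property of $T^{\times n}$, and Theorem~\ref{tw:4} (applied with index $n+1\geq 2$ to the ergodic, indeed weakly mixing, automorphism $T^{\times n}$) then yields JP($n$), with Remark~\ref{uwazka} covering the typicality statement. The index shift and the ergodicity of $T^{\times n}$ are handled correctly, so the proposal matches the paper's proof.
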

Moreover, using Corollary~\ref{wni} and Lemma~\ref{lm:lemac} we have:
\begin{wn}\label{dif}
For $n\geq 2$ the class of automorphisms enjoying the JP($n-1$) property is a proper subclass of automorphisms enjoying the JP($n$) property.
\end{wn}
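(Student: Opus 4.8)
The plan is to combine the two immediately preceding results, Corollary~\ref{wni} and Lemma~\ref{lm:lemac}. First, recall that by the Remark noting that JP($m$)$\subset$JP($n$) for $m\le n$, every automorphism with the JP($n-1$) property automatically has the JP($n$) property, so the inclusion JP($n-1$)$\subseteq$JP($n$) holds for all $n\ge 2$. It therefore remains only to produce, for each fixed $n\ge 2$, a single automorphism which has the JP($n$) property but fails the JP($n-1$) property; such an automorphism witnesses that the inclusion is proper.

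For the witness I would take $T^{\times n}$, the Cartesian product of $n$ copies of a typical automorphism $T$ (with respect to the weak operator topology). By Remark~\ref{uwazka} such a $T$ is weakly mixing and its reduced maximal spectral type generates a Gaussian system with simple spectrum; in particular $T^{\times n}$ is again weakly mixing. Corollary~\ref{wni} then gives that $T^{\times n}$ enjoys the JP($n$) property, while Lemma~\ref{lm:lemac}, applied to the weakly mixing automorphism $T$, gives $T^{\times n}\notin$ JP($n-1$) (concretely, the diagonal self-joining of $T^{\times n}$ is an ergodic joining with $T^{\times n}\times T^{\times n}$ — that is, with a product of $2n\ge n+1\ge(n-1)+1$ weakly mixing factors — which is not an independent extension of a joining with fewer than $n$ of those factors). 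Hence $T^{\times n}$ lies in JP($n$)$\setminus$JP($n-1$), and the inclusion JP($n-1$)$\subseteq$JP($n$) is strict, which is the assertion.

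There is essentially no obstacle beyond checking that the hypotheses of Corollary~\ref{wni} and of Lemma~\ref{lm:lemac} are met by the same $T$ — and they are, since weak mixing of $T$ is all that Lemma~\ref{lm:lemac} requires, while the existence of a weakly mixing $T$ whose reduced maximal spectral type generates a Gaussian system with simple spectrum is guaranteed by the genericity statement quoted in Remark~\ref{uwazka} (and, through Corollary~\ref{wn:prostygausstocsn} and Theorem~\ref{tw:4}, it is precisely this spectral hypothesis that yields the JP($n$) property of $T^{\times n}$). If one prefers an explicit example instead of a generic one, it suffices to replace "typical automorphism" by any specific weakly mixing $T$ whose reduced maximal spectral type generates a Gaussian system with simple spectrum — e.g. the Chacon automorphism — and the argument goes through verbatim.
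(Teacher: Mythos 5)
Your argument is correct and is essentially identical to the paper's proof: the inclusion JP($n-1$)$\subseteq$JP($n$) is the monotonicity remark, and the witness is $T^{\times n}$ for a weakly mixing $T$ whose reduced maximal spectral type generates a Gaussian system with simple spectrum (e.g.\ a typical automorphism), combining Corollary~\ref{wni} for membership in JP($n$) with Lemma~\ref{lm:lemac} for non-membership in JP($n-1$). One small slip in your parenthetical gloss of Lemma~\ref{lm:lemac}: the diagonal self-joining of $T^{\times n}$ is an ergodic joining of $T^{\times n}$ with the product of the $n$ weakly mixing factors $T$ (not with $T^{\times n}\times T^{\times n}$, so $n$ factors rather than $2n$), but since you invoke the lemma as stated this does not affect the argument.
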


\begin{uw}\label{uw:stary}
In~\cite{MR2729082} there is an example of a JP($2$) system which is not in the class JP($1$). The methods used there are different than what we use to obtain Corollary~\ref{dif}. It would be interesting to know if it is possible to find examples of systems from the class JP($n$)$\setminus$JP($n-1$) not using the methods from the present paper.
\end{uw}

\begin{uw}
Recall~\cite{MR2729082} that the class JP is closed under taking distal extensions which are weakly mixing. The proof from~\cite{MR2729082} can be rewriten almost word by word to obtain that 
\begin{enumerate}
\item[(i)]
each JP($n$) class is closed under taking distal extensions which are weakly mixing.
\end{enumerate}
Moreover,
\begin{enumerate}
\item[(ii)]
each JP($n$) class is closed under taking factors and inverse limits.
\end{enumerate}
Moreover (see \cite{MR2729082}),
\begin{enumerate}
\item[(iii)]
every system which is distally simple\footnote{The notion of distal simplicity was defined in~\cite{delJ-Lem}. It imposes a restriction on the self-joinings of the considered system and is a generalization of the notion of quasi-simplicity~\cite{MR2265691}.} has the JP($n$) property,
\item[(iv)]
the class of distally simple systems is closed under factors, distal extensions and inverse limits.
\end{enumerate}
In~\cite{MR2558487} a necessary condition for lifting the JP property for the so-called Rokhlin extensions is provided.\footnote{Recall that given a locally compact group $G$, a cocycle $\varphi\colon X\to G$ and a measurable $G$-action $\mathcal{S}=(S_g)_{g\in G}$ on $(Y,\mathcal{C},\nu)$ the relevant Rokhlin extension of $T\colon (X,\mathcal{B},\mu)\to (X,\mathcal{B},\mu)$ is given by $T_{\varphi,\mathcal{S}}(x,y)=(Tx,S_{\varphi(x)}(y))$.} This yields a class of automorphisms with the JP property, not having the DS property.

We do not know if the examples from Remark~\ref{uw:stary}, from~\cite{MR2558487}, from Corollary~\ref{dif} and (i)-(iv) are the only ``ways'' to obtain systems with the JP($n$) property.

\end{uw}

\section{JP($n$) property and disjointness with ID systems}

In~\cite{MR2729082} Lema\'{n}czyk, Parreau and Roy showed that 
\begin{equation}\label{oni}
\begin{array}{l}
\mbox{all systems with the JP($n$) property are disjoint }\\
\mbox{from ergodically infinitely divisible automorphisms.}
\end{array}
\end{equation}
Let us recall that an ergodic automorphism $T\colon (X,\mathcal{B},\mu)\to (X,\mathcal{B},\mu)$ is said to be infinitely divisible if there exists a sequence of factors $\{\mathcal{B}_{\omega}\colon \omega\in \{0,1\}^\ast\}$\footnote{$\{0,1\}^{\ast}$ stands for the set of all finite sequences with entries $0$ and $1$.} of $\mathcal{B}$ where $\mathcal{B_{\varepsilon}}=\mathcal{B}$, $\mathcal{B}_{\omega}=\mathcal{B}_{\omega 0}\otimes \mathcal{B}_{\omega 1}$ and for each $f\in L^2_0(X,\mathcal{B},\mu)$ and $\eta\in \{0,1\}^{\mathbb{N}}$ it holds
$$
\lim_{n\to\infty} E(f|\mathcal{B}_{\eta[0,n)})=0.
$$
In this section we will deal with another notion of infinite divisibility, namely we will consider dynamical systems arising from stationary infinitely divisible processes. Let us recall the definition of such processes.
\begin{df}[see e.g.~\cite{MR2729082}]
An ergodic stationary process $(X_n)_{n\in\mathbb{Z}}$ on a standard probability Borel space $(X,\mathcal{B},\mu)$ is \emph{infinitely divisible} (ID) if its distribution $P$ on $(\mathbb{R}^{\mathbb{Z}},\mathcal{B}^{\otimes \mathbb{Z}})$ is such that for any $k\geq 1$ there exists a probability measure $P_k$ on $(\mathbb{R}^{\mathbb{Z}},\mathcal{B}^{\otimes \mathbb{Z}})$ such that $P=P_k^{\ast k}$\footnote{Recall that $P_k^{\ast k}$ stands for the $k$-th convolution power of $P_k$.}.
\end{df}
\begin{uw}[see e.g.~\cite{MR2729082}]
A stationary process $(X_n)_{n\in\mathbb{Z}}$ is infinitely divisible if and only if for all $k\geq 1$ and $n\in\mathbb{N}$ there exist pairwise independent random variables $X_n^{(1,k)}, \dots, X_n^{(k,k)}$ such that $X_n=X_n^{(1,k)}+\dots+X_n^{(k,k)}$ and that processes $\left(X_n^{(i,k)}\right)_{n\in\mathbb{Z}}$, $1\leq i\leq k$, are stationary with same distribution $P_k$.
\end{uw}

The following proposition describes the relation between systems arising from ID stationary processes and ergodically ID automorphisms.
\begin{pr}[\cite{MR2729082}]
ID stationary processes are factors of ergodically ID dynamical systems.
\end{pr}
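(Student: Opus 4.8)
The plan is to realise the given ID stationary process as a factor of an explicit \emph{ergodically ID} system, built as an inverse limit from the convolution roots of its distribution $P$. First I would fix consistent roots: since $P$ is ID, every finite--dimensional marginal of $P$ is an ID law on a Euclidean space, and such a law has, for each $m$, a unique $m$-th convolution root (read off from the L\'evy--Khintchine representation, whose characteristic function never vanishes), which is again ID; these roots are mutually compatible, so $P$ embeds in a weakly continuous convolution semigroup $(\mu_t)_{t\ge0}$ on $(\mathbb{R}^{\mathbb{Z}},\mathcal{B}^{\otimes\mathbb{Z}})$ with $\mu_1=P$ and $\mu_0=\delta_0$. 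Setting $Q_n=\mu_{2^{-n}}$ we get $Q_0=P$, $Q_{n+1}^{\ast 2}=Q_n$, each $Q_n$ is ID, and $Q_n\to\delta_0$ weakly as $n\to\infty$.

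Next I would form the inverse limit $(\Omega,\mathbb{P},T)=\varprojlim(\Omega_n,\mathbb{P}_n,T_n)$, where $\Omega_n=(\mathbb{R}^{\mathbb{Z}})^{\{0,1\}^n}$ carries the product measure $\mathbb{P}_n=Q_n^{\otimes 2^n}$ and the shift $T_n$ acting coordinatewise (shifting each of the $2^n$ coordinate--processes), and the bonding map $\pi_{n+1,n}\colon\Omega_{n+1}\to\Omega_n$ sends $(z^w)_{|w|=n+1}$ to $(z^{v0}+z^{v1})_{|v|=n}$; these maps commute with the shifts and, because $Q_{n+1}^{\ast 2}=Q_n$, push $\mathbb{P}_{n+1}$ onto $\mathbb{P}_n$. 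A point of $\Omega$ then has coordinates $z^v$ for all finite binary words $v$, with $z^v=z^{v0}+z^{v1}$, and the factor $\sigma(z^\varepsilon)$ carries the law $P$ and the shift, so the ID process is a factor of $(\Omega,\mathbb{P},T)$. For the required tree of factors I would take $\mathcal{B}_\omega=\sigma(z^v\colon v\succeq\omega)$, the $\sigma$-algebra of the subtree rooted at $\omega$. Then $\mathcal{B}_\varepsilon=\mathcal{B}$, each $\mathcal{B}_\omega$ is $T$-invariant, and $\mathcal{B}_\omega=\mathcal{B}_{\omega 0}\otimes\mathcal{B}_{\omega 1}$: the two sub-$\sigma$-algebras are independent since the corresponding subtrees are independent under every $\mathbb{P}_n$, hence under $\mathbb{P}$, and together they generate $\mathcal{B}_\omega$ because $z^\omega=z^{\omega 0}+z^{\omega 1}$.

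For the decay condition, fix a branch $\eta\in\{0,1\}^{\mathbb{N}}$ and $f\in L^2_0(\Omega)$; by density it suffices to treat $f$ a bounded continuous function of finitely many time--values of the level-$N$ coordinates $(z^v)_{|v|=N}$, for some $N$. For $n>N$, among these coordinates only $z^{\eta[0,N)}$ is correlated with $\mathcal{B}_{\eta[0,n)}$, and only through $z^{\eta[0,n)}$: indeed $z^{\eta[0,N)}=z^{\eta[0,n)}+R_n$ where $R_n$, as well as every coordinate $z^v$ with $|v|=N$ and $v\neq\eta[0,N)$, is independent of $\mathcal{B}_{\eta[0,n)}$, so $E(f\mid\mathcal{B}_{\eta[0,n)})=E(f\mid z^{\eta[0,n)})$. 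Now $z^{\eta[0,n)}$ has law $Q_n\to\delta_0$, while the conditional law of $z^{\eta[0,N)}$ given $z^{\eta[0,n)}=c$ tends to $Q_N$ uniformly for $c$ in a shrinking neighbourhood of the zero process (weak continuity of the semigroup). Hence $E(f\mid\mathcal{B}_{\eta[0,n)})\to E(f)=0$ in $L^2$, which is exactly what is needed.

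It remains to see that $(\Omega,\mathbb{P},T)$ is ergodic, and this is the step I expect to be the main obstacle. Each $(\Omega_n,\mathbb{P}_n,T_n)$ is a $2^n$-fold Cartesian power of the shift on $(\mathbb{R}^{\mathbb{Z}},Q_n)$, and a Cartesian power of a transformation is ergodic precisely when that transformation is weakly mixing; so it is enough to know that every $Q_n$-process is weakly mixing, for then each $\Omega_n$ is ergodic and so is the inverse limit $\Omega$. This I would deduce from the structure theory of ID stationary processes: ergodicity of an ID process depends only on its L\'evy measure and is unaffected by rescaling it, so each $Q_n$-process is again an ergodic ID process, and an ergodic ID process has no eigenvalue other than $1$, hence is weakly mixing (see \cite{MR2729082} and the literature on ID processes cited there). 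Combining the four points — the process is a factor of $\Omega$, the $\mathcal{B}_\omega$ form a $T$-invariant tree with $\mathcal{B}_\omega=\mathcal{B}_{\omega 0}\otimes\mathcal{B}_{\omega 1}$, the branchwise decay holds, and $\Omega$ is ergodic — yields the proposition. The inverse--limit construction makes the tree and the decay essentially automatic; ergodicity is the point where one genuinely needs the external fact that ergodic ID processes are weakly mixing.
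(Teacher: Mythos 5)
The paper itself contains no proof of this proposition --- it is quoted from \cite{MR2729082} --- so there is nothing in the text to compare with line by line; the argument in the cited source goes through the Maruyama--L\'evy--It\^o representation, exhibiting the system generated by the ID process as a factor of the product of a Gaussian system and a Poisson suspension over the L\'evy measure, each of which is ergodically ID by splitting the spectral measure, respectively the intensity, in halves along the binary tree. Your construction is sound and genuinely different: you stay at the level of distributions on path space, use the unique convolution roots $Q_n$ of $P$ (uniqueness of roots of the finite-dimensional ID marginals, via their non-vanishing characteristic functions, gives consistency, and --- a point you should state explicitly --- shift-invariance of each $Q_n$, which is what makes the coordinatewise shift preserve $\mathbb{P}_n$ and the $\mathcal{B}_\omega$ factors), and build the ambient system as an inverse limit of the powers $Q_n^{\otimes 2^n}$ indexed by the tree. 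The tree identity $\mathcal{B}_\omega=\mathcal{B}_{\omega0}\otimes\mathcal{B}_{\omega1}$ and the branchwise decay then come out of the construction by elementary arguments, which is what this route buys: no chaos decomposition is needed to verify the e-ID axioms. What it cannot avoid, as you say yourself, is the structure theory at the ergodicity step: you need each $Q_n$-shift to be weakly mixing, which rests on the theorem of Rosi\'nski and \.Zak that ergodicity and weak mixing coincide for stationary ID processes, together with the fact that the ergodicity criterion is invariant under scaling the characteristic triplet. That reliance is legitimate (the representation-based proof uses the same inputs), but phrase it correctly: ergodicity is governed by the L\'evy measure \emph{and} by the spectral measure of the Gaussian part (continuity of the latter plus the scale-invariant condition on the former), not by the L\'evy measure alone; since both conditions are unchanged when the triplet is multiplied by $2^{-n}$, your conclusion stands.

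Two details to tighten, neither of which is a genuine gap. In the decay argument, take the dense class of $f$ to consist of bounded \emph{uniformly} continuous (say Lipschitz) cylinder functions of finitely many time-values of level-$N$ coordinates: then the functions $c\mapsto E\,F(c+R_n,W)$ are uniformly bounded, equicontinuous and converge pointwise, and combined with $\mathrm{law}\bigl(z^{\eta[0,n)}\bigr)\to\delta_0$ this gives $E\bigl(f\mid\mathcal{B}_{\eta[0,n)}\bigr)\to E(f)$ in $L^2$; the contraction property of conditional expectations then transfers the conclusion to all of $L^2_0$. And the passage from ergodicity of each $\Omega_n$ to ergodicity of the inverse limit deserves one line: for an invariant $A$, each $E\bigl(\1_A\mid\mathcal{A}_n\bigr)$ is an invariant $\mathcal{A}_n$-measurable function, hence constant, and martingale convergence finishes it. With these points made explicit the proof is complete.
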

An immediate consequence of~\eqref{oni} and of the above proposition is the following corollary.
\begin{wn}[\cite{MR2729082}]
All systems with the JP($n$) property are disjoint from systems arising from ID stationary processes.
\end{wn}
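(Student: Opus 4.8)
The plan is to deduce the statement directly from~\eqref{oni} and the preceding proposition, using only the elementary fact that disjointness is inherited by factors. First, let $T\colon\xbm\to\xbm$ be an automorphism with the JP($n$) property and let $S\colon\ycn\to\ycn$ be the automorphism associated with an ID stationary process. By the above proposition, $S$ is isomorphic to a factor of some ergodically ID automorphism $R\colon\zdr\to\zdr$; identify $\mathcal{C}$ with the corresponding $R$-invariant sub-$\sigma$-algebra of $\mathcal{D}$, so that $\nu=\rho|_{\mathcal{C}}$ and $R|_{\mathcal{C}}=S$. By~\eqref{oni}, $T$ is disjoint from $R$, that is $J(T,R)=\{\mu\otimes\rho\}$.

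Next I would check that this forces $J(T,S)=\{\mu\otimes\nu\}$. Take any $\lambda\in J(T,S)=J(T,R|_{\mathcal{C}})$ and form its relatively independent extension $\hat{\lambda}\in J(T,R)$ over the factor $\mathcal{C}$, as defined in Section~\ref{sec:def}; by construction $\hat{\lambda}$ is a genuine element of $J(T,R)$ and $\hat{\lambda}|_{\mathcal{B}\otimes\mathcal{C}}=\lambda$. Since $J(T,R)$ is the singleton $\{\mu\otimes\rho\}$, we must have $\hat{\lambda}=\mu\otimes\rho$, and restricting to $\mathcal{B}\otimes\mathcal{C}$ yields $\lambda=\mu\otimes(\rho|_{\mathcal{C}})=\mu\otimes\nu$. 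Hence $T$ and $S$ are disjoint, which is the assertion.

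There is no real obstacle in this argument; the only point worth stating explicitly is the standard observation of Furstenberg that every joining of $T$ with a factor of $R$ lifts to a joining of $T$ with $R$, and this is exactly what the relatively independent extension provides. Everything else is a purely formal consequence of the two results already established.
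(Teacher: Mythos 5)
Your argument is correct and is exactly the route the paper takes: the corollary is stated there as an immediate consequence of~\eqref{oni} and the proposition that ID stationary processes are factors of ergodically ID systems, with the factor-lifting step (via the relatively independent extension) left implicit, which you have simply spelled out.
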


We will provide another proof of this result which will be based on a more general proposition. Let us first recall some definitions and introduce necessary notation.
\begin{df}
Let $T$ be an automorphism on a standard probability Borel space $(X,\mathcal{B},\mu)$ and let $k\geq 1$. The sub-$\sigma$-algebra of $\mathcal{B}^{\otimes k}$ which consists of all sets invariant under permutations of coordinates is called the \emph{symmetric factor} of $T^{\times k}$. It is denoted by $\mathcal{F}_k(T)$.
\end{df}

\begin{uw}[\cite{MR2729082}]
The dynamical system determined by a stationary ID process is, for any integer $k\geq 1$, a factor of the symmetric factor of the Cartesian $k$-th power of a dynamical system.
\end{uw}

\begin{pr}\footnote{Lema\'{n}czyk, Parreau and Roy in~\cite{MR2729082} (Proposition 5) cover the case $n=1$.}\label{to}
Fix $n\geq 1$ and let $T$ be an automorphism of a standard probability space $(X,\mathcal{B},\mu)$. Assume that, for each $g\in L^2(X,\mathcal{B},\mu)$, there exists a sequence $(k_j)_{j\geq 1}$ of integers going to infinity and a sequence $(S_j)_{j\geq 1}$ of weakly mixing automorphisms such that $T$ is a factor of $S_{j}^{\times k_j}$ and moreover
\begin{equation*}
dist(g,L^2(\mathcal{F}_{k_j}(S_j)))=o\left(\frac{1}{k_j^{\nicefrac{n}{2}}} \right).
\end{equation*}
Then $T$ is disjoint from every JP($n$) automorphism.
\end{pr}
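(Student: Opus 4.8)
The plan is to suppose that $\lambda \in J^e(T,T)$ is an ergodic self-joining and, more generally, to analyze ergodic joinings $\lambda \in J^e(R,T)$ for an arbitrary JP($n$) automorphism $R$, showing $\lambda = \mu_R \otimes \mu$. First I would fix $g \in L^2(X,\mathcal{B},\mu)$ and pick, via the hypothesis, the sequence $(k_j)$ and the weakly mixing automorphisms $(S_j)$ with $T$ a factor of $S_j^{\times k_j}$ and $\mathrm{dist}\bigl(g,L^2(\mathcal{F}_{k_j}(S_j))\bigr)=o(k_j^{-n/2})$. Via the factor map, lift $\lambda$ to a joining $\widehat{\lambda}_j \in J(R,S_j^{\times k_j})$ (concentrated on the graph of the factor map in the $T$-coordinate), and replace it by an ergodic component, which is still a joining of $R$ with $S_j^{\times k_j}$ because $R$ projects onto a measure equal to $\mu_R$ (we can choose the component over which the $R$-projection is $\mu_R$, using that $\mu_R$ is $R$-ergodic). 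Since $R$ has the JP($n$) property, this ergodic joining is an independent extension of a joining of $R$ with at most $n$ of the coordinate factors $S_j$; in particular, for the corresponding Markov operator $\Phi_j = \Phi_{\widehat\lambda_j}$, the image $\Phi_j(L^2(R))$ is orthogonal to every subspace $L^Y_{i_1,\dots,i_{n+1}}$ with $1 \le i_1 < \dots < i_{n+1} \le k_j$, i.e. to every tensor product of $n+1$ distinct nontrivial coordinate factors of $S_j^{\times k_j}$.

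The heart of the matter is to estimate $\|\Phi_j(h)\|$ for $h \in L^2(R)$ from this orthogonality, combined with the closeness of $g$ (hence of the relevant function $f$ on $X$ pulled back to $Y^{k_j}$) to the symmetric factor $L^2(\mathcal{F}_{k_j}(S_j))$. The key algebraic point is that the symmetric factor $\mathcal{F}_{k}(S)$, intersected with $L^2_0(Y^{\times k})$ and with the part of degree $\le n$ in the "number of active coordinates", has small $L^2$-norm relative to the full symmetric part: a function in $L^2_0(Y^{k})$ invariant under all coordinate permutations, living in the sum of the $L^Y_{i_1,\dots,i_r}$ with $r \le n$, is an average over the $\binom{k}{r}$ choices, so its squared norm is of order at most $k^{-r}$ times a bounded quantity — more precisely, writing $g - g'$ with $g' \in L^2(\mathcal{F}_{k_j}(S_j))$ of norm $o(k_j^{-n/2})$ and decomposing $g'$ into its components supported on $r$-subsets of coordinates, one controls the $r \le n$ components by symmetry (each such symmetric component of norm $N$ forces its "seed" on a fixed $r$-subset to have norm $N/\sqrt{\binom{k}{r}} \asymp N k^{-r/2}$, which is then small enough to neglect since $\Phi_j$ is a contraction and $r \ge 1$ gives decay, while $r=0$ is killed by working in $L^2_0$), and the components with $r \ge n+1$ are killed exactly by the JP($n$)-orthogonality of $\Phi_j(L^2(R))$. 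Putting these together, $\|\Phi_j(g)\| \to 0$ along $j$; since this holds for every $g$, the Markov operator $\Phi_\lambda$ is zero on $L^2_0(X)$, i.e. $\lambda = \mu_R \otimes \mu$.

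The main obstacle will be bookkeeping the decomposition of $L^2_0(Y^{\times k_j})$ into the pieces $L^Y_{i_1,\dots,i_r}$ and matching the symmetric factor $\mathcal{F}_{k_j}(S_j)$ against it, so that the "low degree" ($r\le n$) symmetric part is provably $O(k_j^{-n/2})$ in norm while the "high degree" ($r\ge n+1$) part is annihilated by $\Phi_j$ on the nose; one must be careful that the approximation hypothesis $\mathrm{dist}(g,L^2(\mathcal{F}_{k_j}(S_j)))=o(k_j^{-n/2})$ interacts correctly with this degree decomposition (the symmetric factor is not a priori respected by the degree filtration at the level of $L^2$, but its intersections with the finitely many degree pieces are, and the approximation can be pushed through each piece separately). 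Once that decomposition lemma is in hand — essentially the observation that a nonzero $S^{\times k}$-symmetric function supported on $r$ active coordinates has $L^2$-norm at least $\sqrt{\binom{k}{r}}$ times that of its restriction to a single $r$-subset — the rest is routine: apply JP($n$) to kill the top part, use contractivity of $\Phi_j$ plus the norm bounds to kill the low part, and let $j\to\infty$.
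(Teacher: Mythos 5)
Your overall route differs from the paper's, and it can be made to work, but as written the central estimate has a gap in the low-degree part. From JP($n$) you only record that $\Phi_j(L^2(R))$ is orthogonal to every $(n{+}1)$-fold product $L^Y_{i_1,\dots,i_{n+1}}$, and you then propose to dispose of the degree-$r$ pieces with $1\le r\le n$ by ``small seed plus contractivity''. That mechanism fails: if $g'\in L^2(\mathcal{F}_{k}(S))$ is the symmetric approximant and $g'_r$ its degree-$r$ component, each seed indeed has norm $\|g'_r\|\binom{k}{r}^{-1/2}$, but there are $\binom{k}{r}$ of them, and contractivity only gives $\|\Phi(g'_r)\|\le\|g'_r\|$ (or, summing seeds, $\binom{k}{r}^{1/2}\|g'_r\|$), which is not small --- take $g'=k^{-1/2}\sum_{i=1}^k h(y_i)$ with $h\in L^2_0(Y)$, $\|h\|=1$, which is exactly symmetric of degree one and of norm one. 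What actually saves the low degrees is the full independent-extension conclusion you state but then do not use: the ergodic component $\lambda'$ has the form $\lambda'_{Z,Y_{i_1},\dots,Y_{i_n}}\otimes\bigotimes_{j\notin I}\nu_j$, so the associated Markov operator annihilates \emph{every} $L^Y_J$ with $J\not\subset I=\{i_1,\dots,i_n\}$, including low-degree $J$; hence at most $\binom{n}{r}$ of the $\binom{k}{r}$ seeds of $g'_r$ survive, each of norm $O(k^{-1/2})$, and the bookkeeping closes. (There is also a direction slip: you introduce $\Phi_j$ on $L^2(R)$ but then apply it to $g\in L^2(X)$; it is the operator $L^2(Y^{\times k_j})\to L^2(Z)$, or equivalently the pairing $\langle\Phi_j(h),g\rangle$, that carries the argument.)

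A second missing step is the passage to an ergodic component. For your final deduction that $\Phi_\lambda$ vanishes on $L^2_0(X)$ you need the chosen component $\lambda'$ of the lift $\hat\lambda_j$ not only to be a joining (you check the $R$-marginal; the $S_j^{\times k_j}$-marginal uses ergodicity of $\nu^{\otimes k_j}$, i.e.\ weak mixing of $S_j$) but also to restrict to $\lambda$ on $Z\times X$, so that $\Phi_{\lambda'}$ agrees with $\Phi_\lambda$ on $L^2(X)\subset L^2(Y^{\times k_j})$. This is true for a.e.\ component, by ergodicity of $\lambda$ and uniqueness of the ergodic decomposition of $\hat\lambda_j|_{Z\times X}=\lambda$, but it is precisely the point that licenses replacing $\hat\lambda_j$ by one ergodic component, and it is absent from your sketch. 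Note that the paper avoids this step altogether: it keeps the whole ergodic decomposition, averages the projections $p'_{j_1,\dots,j_m}\circ\Phi_\lambda(f)$ over components whose active $n$-sets vary, and then needs a counting argument together with near-equality of the norms $\|f_{j_1,\dots,j_m}\|$ (via closeness to the symmetric factor) to control a sum of $\binom{k}{m}$ squared terms --- which is exactly where the rate $o(k^{-n/2})$ is consumed. Your single-component variant, once the two points above are supplied, is a legitimate and in some ways simpler alternative (it only uses that the distance to $L^2(\mathcal{F}_{k_j}(S_j))$ tends to $0$), but as written the steps that make it work are not there.
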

\begin{proof}
Let $\lambda_0\in J^e(R,T)$, where $R:\zdr\to\zdr$ is an ergodic automorphism with the JP($n$) property
and let $f\in L^2_0\zdr$.
It is enough to show that $g:=\Phi_{\lambda_0}(f)=0$.

By the assumption, given any $\vep>0$, there exist an
arbitrarily large
integer $k\geq n$ and a weakly mixing automorphism $S$ on a standard
space $\ycn$ such that $T$ is a factor of $S^{\times k}$ and
\begin{equation}\label{f0}
dist(g,L^2(\mathcal{F}_{k}(S))\le\frac{\vep}{k^{\nicefrac{n}{2}}}\;\cdotp
\end{equation}
Let $\hat{\lambda}_0$ be the relatively independent extension
of $\lambda_0$ to a
joining of $R$ with $S^{\times k}$, so that
$\Phi_{\hat{\lambda}_0}$ is the composition
of $\Phi_{\lambda_0}$ and the embedding of $L^2\xbm$ into
$L^2(Y^{\times k},\mathcal{C}^{\otimes k},\nu^{\otimes k})$.  For the sake of simplicity of notation we will assume that $\mathcal{B}\subset \mathcal{C}^{\otimes k}$ and $\mu = \nu^{\otimes k}|_{\mathcal{B}}$. Then we have $\Phi_{\hat{\lambda}_0}(f)=\Phi_{\lambda_0}(f)=g$. 

Consider now the ergodic decomposition of $\hat{\lambda}_0$:
\begin{equation*}
\hat{\lambda}_0=\int_{J^e(R,S^{\times k})} \lambda\ dP(\lambda).
\end{equation*}
Since $R$ has the JP($n$) property, for each $\lambda\in J^e(R,S^{\times k})$ there exist $1\leq i_{\lambda,1}<\dots <i_{\lambda,n}\leq k$ such that 
$$
\Phi_{\lambda}(f) \in L^2_0(Y_{i_{\lambda,1}}\times \dots \times Y_{i_{\lambda,n}}).
$$
Then (see~\eqref{eq:lm:20:1-pr})
\begin{equation}\label{eq:nowe8}
\Phi_{\lambda}(f)=\sum_{m=1}^n\sum_{\substack{ { j_1<\dots<j_m}\\  \{j_1,\dots,j_m\}\subset\{i_{\lambda,1},\dots,i_{\lambda,n}\}}}p'_{j_1,\dots,j_m}\circ\Phi_{\lambda}(f),
\end{equation}
where $p'_{j_1,\dots,j_m}$ stands for the orthogonal projection from $L^2(Y_1\times\dots\times Y_n)$ onto $L^Y_{j_1,\dots,j_m}$. For $1\leq j_1<\dots<j_m\leq k$ we also have
\begin{equation}\label{eq:par13}
{p'_{j_1,\dots,j_m}}\circ\Phi_{\lambda}(f)=0 \text{ whenever }\{j_1,\dots,j_m\}\not\subset \{{i}_{\lambda,1},\dots,{i}_{\lambda,n}\}.
\end{equation}
Hence, letting for $1\leq m\leq n$
\begin{equation*}
f_{j_1,\dots,j_m}=\int_{J^e(R,S^{\times k})}p'_{j_1,\dots,j_m}\circ \Phi_{\lambda}(f)\ dP(\lambda)
\end{equation*}
for $1\leq j_1<\dots<j_m\leq k$, and
$$
g_m=\sum_{1\leq j_1<\dots<j_m\leq k}f_{j_1,\dots,j_m} \in \bigoplus_{1\leq j_1<\dots<j_m\leq k}L_{j_1,\dots,j_m}^Y
$$
we obtain by~\eqref{eq:nowe8} and~\eqref{eq:par13} that
$$
g=\sum_{m=1}^n g_m.
$$

Fix $1\leq m \leq n$. By~\eqref{eq:par13} for each $\lambda\in J^e(R,S^{\times k})$ at most ${n \choose m}$ out of the ${k \choose m}$ projections $p'_{s_1,\dots,s_m}\circ\Phi_{\lambda}(f)$ do not vanish. Therefore there exist $j_1^0<\dots<j_m^0$ such that
\begin{equation*}
P\left(\left\{\lambda\colon p'_{j_1^0,\dots,j_m^0}\circ\Phi_{\lambda}f\neq 0\right\}\right)\leq\frac{{n \choose m}}{{k\choose m}},
\end{equation*}
whence
\begin{equation}\label{f2}
\|f_{j_1^0,\dots,j_m^0}\|\leq \frac{{n \choose m}}{{k\choose m}}\|f\|.
\end{equation}
Now we claim that for all $j_1<\dots<j_m$ the following inequality holds:
\begin{equation}\label{f1}
\left| \| f_{j_1,\dots,j_m}\|-\|f_{j_1^0,\dots ,j_m^0} \|\right|\leq \frac{2\varepsilon}{k^{\nicefrac{n}{2}}}.
\end{equation}
Indeed, consider a permutation $\pi$ of $\{1,\dots k\}$ sending each $j_s^0$ to $j_s$ for all $s\in \{1,\dots,m\}$ and the corresponding unitary operator $U_{\pi}$ on $L^2(Y^{\times k},\mathcal{C}^{\otimes k},\nu^{\otimes k})$. Since all functions in $L^2(\mathcal{F}_k(S))$ are fixed by $U_{\pi}$,
\begin{multline*}
\|f_{j_1,\dots,j_m}-f_{j_1^0,\dots,j_m^0}\|\\
\leq \left(\sum_{1\leq l \leq n}\sum_{1\leq i_1<\dots< i_l\leq k}\|f_{\pi(i_1),\dots,\pi(i_l)}-f_{i_1,\dots,i_l} \|^2\right)^{\nicefrac{1}{2}}\\
=\|U_{\pi}g-g\|\leq 2\cdot\text{dist}(g,L^2(\mathcal{F}_k(S)))
\end{multline*}
and~\eqref{f1} follows from~\eqref{f0}.

So, by~\eqref{f2}, $\|f_{j_1,\dots j_m}\|\leq \frac{{n \choose m}}{{k\choose m}}\|f\|+\frac{2\varepsilon}{k^{\nicefrac{n}{2}}}$. Therefore 
\begin{multline*}
\|g_m\| =\left( \sum_{1\leq j_1<\dots<j_m\leq n} \|f_{j_1,\dots,j_m}\|^2\right)^{\nicefrac{1}{2}}\\
\leq {k \choose m}^{\nicefrac{1}{2}}\left(\frac{{n\choose m}}{{k\choose m}}\|f\|+\frac{2\vep}{k^{\nicefrac{n}{2}}} \right)
\leq \frac{{n \choose m}}{{k\choose m}^{\nicefrac{1}{2}}}\|f\|+2\vep.
\end{multline*}
Since $k$ was arbitrarily large and $\varepsilon$ arbitrarily small, this proves that $g_m=0$. Hence $g=\sum_{m=1}^{n}g_m=0$.
\end{proof}

\begin{wn}
Let $n\geq 1$. Any non-zero root of an automorphism with the JP($n$) property is disjoint from all automorphisms arising from ID stationary processes.
\end{wn}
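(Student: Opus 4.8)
The plan is to reduce the statement to the disjointness of JP($n$) systems from systems arising from ID stationary processes (recalled just above) by passing to a suitable power. Let $R$ be an automorphism of $\xbm$ with the JP($n$) property, and let $\widetilde T$ be a non-zero root of $R$; replacing if necessary $\widetilde T$, $R$ and the target system by their inverses (a time-reversed ID process is again ID, and $R^{-1}$ again has JP($n$)), I may assume $\widetilde T^{k}=R$ for some integer $k\ge 1$. Let $U$ be the automorphism of $\ycn$ associated with an ID stationary process $(X_m)_{m\in\mathbb Z}$; the goal is to show $J(\widetilde T,U)=\{\mu\otimes\nu\}$. The key elementary observation is that any $\lambda\in J(\widetilde T,U)$, being $(\widetilde T\times U)$-invariant, is also $(\widetilde T\times U)^{k}=R\times U^{k}$-invariant and has the same marginals $\mu$ and $\nu$; hence $J(\widetilde T,U)\subseteq J(R,U^{k})$, and it suffices to prove that $R$ is disjoint from $U^{k}$.

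I would then check that $U^{k}$ is itself (isomorphic to) the automorphism associated with an ergodic ID stationary process, so that the recalled corollary applies to the pair $(R,U^{k})$. Ergodicity of $U^{k}$ is not automatic from ergodicity of $U$, but it holds here because an ergodic ID process is weakly mixing, hence totally ergodic. For the process itself: reading off the block $(X_{0},\dots,X_{k-1})$ as a single observable identifies $U^{k}$ with the shift of the $\mathbb R^{k}$-valued stationary process $\bigl((X_{km},\dots,X_{km+k-1})\bigr)_{m\in\mathbb Z}$, whose finite-dimensional distributions are finite-dimensional marginals of those of $(X_m)_{m\in\mathbb Z}$ and therefore infinitely divisible; so this (vector-valued) process is again ID. The disjointness statement is insensitive to the passage from real-valued to $\mathbb R^{k}$-valued processes, since the only structural feature it uses is that such a process is, for every $j\ge 1$, a factor of the symmetric factor of the $j$-th Cartesian power of a dynamical system (cf.\ the Remark preceding Proposition~\ref{to}), which is preserved in the vector-valued setting. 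Thus $R\perp U^{k}$, whence $J(\widetilde T,U)=\{\mu\otimes\nu\}$ and $\widetilde T$ is disjoint from $U$.

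I expect the only real content to be the verification that $U^{k}$ still falls within the scope of the disjointness result, i.e.\ the two standard facts about ID processes that get invoked: that an ergodic ID process is totally ergodic (otherwise $U^{k}$ would be a non-ergodic ID-distributed system, which is outside the definition of an ID process used here), and that subsampling a stationary process preserves infinite divisibility of its law. Everything else — the inclusion $J(\widetilde T,U)\subseteq J(R,U^{k})$ and the description of a generating process for $U^{k}$ — is routine. One could also bypass the recalled corollary and apply Proposition~\ref{to} directly to $U^{k}$, its approximation hypothesis then being satisfied trivially (with zero distance, since $U^{k}$ is a factor of each $\mathcal F_{j}(\,\cdot\,)$); that route, however, rests on the same underlying input, namely that the component systems arising in an ID decomposition may be taken weakly mixing.
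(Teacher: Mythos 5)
Your reduction $J(\widetilde T,U)\subseteq J(R,U^{k})$ is exactly the (implicit) first step of the paper's argument, and your overall conclusion is correct; but for the key step --- disjointness of $R$ from $U^{k}$ --- you take a genuinely different route. The paper does not try to show that $U^{k}$ is again an ID-process system: it simply observes that the hypotheses of Proposition~\ref{to} are stable under passing to non-zero powers, because if $U$ is a factor of $S_j^{\times k_j}$ with $S_j$ weakly mixing, then $U^{k}$ is a factor of $(S_j^{k})^{\times k_j}$, the power $S_j^{k}$ is still weakly mixing, and the symmetric factor $\mathcal{F}_{k_j}(S_j^{k})=\mathcal{F}_{k_j}(S_j)$ together with the ambient $L^2$ space is unchanged, so the approximation condition transfers verbatim; hence $R\perp U^{k}$ by Proposition~\ref{to} and the corollary follows in one line, entirely inside the paper's framework. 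Your main route instead re-establishes that the blocked $\mathbb{R}^{k}$-valued process is an ergodic stationary ID process. The blocking/subsampling and ID-ness of the regrouped law are indeed routine, but the ergodicity of $U^{k}$ rests on the equivalence of ergodicity and weak mixing for stationary ID processes (Rosi\'nski--\.Zak), a nontrivial external theorem neither proved nor cited in the paper, and you also need a vector-valued version of the recalled disjointness corollary, whose proof in the paper passes through the ergodically-ID machinery of Lema\'nczyk--Parreau--Roy, so ``insensitive to the passage to $\mathbb{R}^{k}$-valued processes'' is plausible but not free --- it amounts to rechecking that machinery (or, as in your closing alternative, to redoing the verification of Proposition~\ref{to}'s hypotheses for $U^{k}$, including weak mixing of the component systems). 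What your approach buys is a statement purely in terms of ID processes being closed under blocking; what the paper's transfer argument buys is economy and self-containedness: no new facts about ID processes are needed beyond those already used for $U$ itself.
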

\begin{proof}
It suffices to notice that whenever the assumptions of Proposition~\ref{to} are satisfied for some automorphism, then they are also satisfied for any non-zero power of it. This implies the disjointness of roots of automorphisms with the JP($n$) property from these automorphisms $T$, in particular the disjointness from automorphisms arising from stationary ID processes.
\end{proof}

One of the consequences of the fact that JP systems are disjoint from systems coming from ID stationary processes is the following example of a system which has the so-called Kolmogorov group property, i.e. the convolution of two copies of its maximal spectral type is absolutely continuous with respect to its maximal spectral type. Let $T$ acting on $(X,\mathcal{B},\mu)$ be an automorphism whose reduced maximal spectral type generates a Gaussian system with simple spectrum, and let $T_i \colon (X_i,\mathcal{B}_i,\mu_i)\to (X_i,\mathcal{B}_i,\mu_i)$ for $i\geq 1$ be isomorphic copies of it. Consider the infinite Cartesian product $R=T_1\times T_2\times \dots$. Denote by $\sigma$ the maximal spectral type of $T$. Then $\exp(\sigma):=\sum_{n=1}^{\infty}\frac{\sigma^{\ast n}}{n!}$ is the maximal spectral type of $R$ and clearly  $\sigma_R\ast \sigma_R \ll\sigma_R$. We claim that $R$ is disjoint from automorphisms arising from stationary ID processes. Let $S\colon (Y,\mathcal{C},\nu)\to  (Y,\mathcal{C},\nu)$ be such an automorphism and consider an ergodic joining  $\lambda\in J^e(R,S)$. Then for any $n\in\mathbb{N}$
$$
\lambda|_{X_1,\dots,X_n,Y} \in J(T_1\times \dots \times T_n, S).
$$
By Corollary~\ref{wni} and Proposition~\ref{to}
$$
\lambda|_{X_1,\dots,X_n,Y}=\mu_1\otimes \dots \otimes \mu_n\otimes \nu.
$$
This implies that $\lambda=(\mu_1\otimes \mu_2\otimes \dots) \otimes \nu$.

\section*{Acknowledgements}
We would like to thank M.~Lema{\'n}czyk and E.~Roy for fruitful discussions.

\footnotesize
\bibliography{fok}

\begin{thebibliography}{10}

\bibitem{MR1680995}
O.~N. Ageev.
\newblock On ergodic transformations with homogeneous spectrum.
\newblock {\em J. Dynam. Control Systems}, 5(1):149--152, 1999.

\bibitem{MR1835446}
O.~N. Ageev.
\newblock On the spectrum of {C}artesian powers of classical automorphisms.
\newblock {\em Mat. Zametki}, 68(5):643--647, 2000.

\bibitem{MR2465595}
O.~N. Ageev.
\newblock Mixing with staircase multiplicity functions.
\newblock {\em Ergodic Theory Dynam. Systems}, 28(6):1687--1700, 2008.

\bibitem{MR2558487}
T.~Austin and M.~Lema{{\'n}}czyk.
\newblock Relatively finite measure-preserving extensions and lifting
  multipliers by {R}okhlin cocycles.
\newblock {\em J. Fixed Point Theory Appl.}, 6(1):115--131, 2009.

\bibitem{MR2794951}
Alexandre~I. Danilenko and Valery~V. Ryzhikov.
\newblock Mixing constructions with infinite invariant measure and spectral
  multiplicities.
\newblock {\em Ergodic Theory Dynam. Systems}, 31(3):853--873, 2011.

\bibitem{MR922364}
A.~del Junco and D.~Rudolph.
\newblock On ergodic actions whose self-joinings are graphs.
\newblock {\em Ergodic Theory Dynam. Systems}, 7(4):531--557, 1987.

\bibitem{delJ-Lem}
Andr{\'e}s del Junco and Mariusz Lema{\'n}czyk.
\newblock Joinings of distally simple automorphisms.
\newblock Unpublished preprint, 2005.

\bibitem{MR0213508}
H.~Furstenberg.
\newblock Disjointness in ergodic theory, minimal sets, and a problem in
  {D}iophantine approximation.
\newblock {\em Math. Systems Theory}, 1:1--49, 1967.

\bibitem{MR0114251}
I.~V. Girsanov.
\newblock Spectra of dynamical systems generated by stationary {G}aussian
  processes.
\newblock {\em Dokl. Akad. Nauk SSSR (N.S.)}, 119:851--853, 1958.

\bibitem{MR1958753}
E.~Glasner.
\newblock {\em Ergodic theory via joinings}, volume 101 of {\em Mathematical
  Surveys and Monographs}.
\newblock American Mathematical Society, Providence, RI, 2003.

\bibitem{MR0011173}
P.~R. Halmos.
\newblock In general a measure preserving transformation is mixing.
\newblock {\em Ann. of Math. (2)}, 45:786--792, 1944.

\bibitem{MR2186251}
A.~Katok and J.-P. Thouvenot.
\newblock Spectral properties and combinatorial constructions in ergodic
  theory.
\newblock In {\em Handbook of dynamical systems. {V}ol. 1{B}}, pages 649--743.
  Elsevier B. V., Amsterdam, 2006.

\bibitem{katokstepin1967}
A.~B. Katok and A.~M. Stepin.
\newblock Approximations in ergodic theory.
\newblock {\em Uspehi Mat. Nauk}, 22(5 (137)):81--106, 1967.

\bibitem{lem-ency}
M.~Lema{\'n}czyk.
\newblock Spectral theory of dynamical systems.
\newblock In {\em Encyclopedia of Complexity and System Science}, pages
  8554--8575. Springer-Verlag, 2009.

\bibitem{lemanczyk+parreau}
M.~Lema{\'n}czyk and F.~Parreau.
\newblock Special flows over irrational rotations with simple convolution
  property.
\newblock {\em Preprint}, 2009.

\bibitem{MR2729082}
M.~Lema{\'n}czyk, F.~Parreau, and E.~Roy.
\newblock Joining primeness and disjointness from infinitely divisible systems.
\newblock {\em Proc. Amer. Math. Soc.}, 139(1):185--199, 2011.

\bibitem{MR1784644}
M.~Lema{{\'n}}czyk, F.~Parreau, and J.-P. Thouvenot.
\newblock Gaussian automorphisms whose ergodic self-joinings are {G}aussian.
\newblock {\em Fund. Math.}, 164(3):253--293, 2000.

\bibitem{MR0272042}
J.~Neveu.
\newblock {\em Processus al\'eatoires gaussiens}.
\newblock S\'eminaire de Math\'ematiques Sup\'erieures, No. 34 (\'Et\'e, 1968).
  Les Presses de l'Universit\'e de Montr\'eal, Montreal, Que., 1968.

\bibitem{pa-ro}
F.~Parreau and E.~Roy.
\newblock Selfjoinings of poisson suspensions.
\newblock {\em Preprint}.

\bibitem{MR2140546}
W.~Parry.
\newblock {\em Topics in ergodic theory}, volume~75 of {\em Cambridge Tracts in
  Mathematics}.
\newblock Cambridge University Press, Cambridge, 2004.
\newblock Reprint of the 1981 original.

\bibitem{robi}
E.~A. Robinson, Jr.
\newblock Transformations with highly nonhomogeneous spectrum of finite
  multiplicity.
\newblock {\em Israel J. Math.}, 56(1):75--88, 1986.

\bibitem{MR2354530}
V.~V. Ryzhikov.
\newblock Weak limits of powers, the simple spectrum of symmetric products, and
  mixing constructions of rank 1.
\newblock {\em Mat. Sb.}, 198(5):137--159, 2007.

\bibitem{MR2604539}
V.~V. Ryzhikov.
\newblock Spectral multiplicities and asymptotic operator properties of actions
  with an invariant measure.
\newblock {\em Mat. Sb.}, 200(12):107--120, 2009.

\bibitem{MR2265691}
V.~V. Ryzhikov and J.-P. Thouvenot.
\newblock Disjointness, divisibility, and quasi-simplicity of
  measure-preserving actions.
\newblock {\em Funktsional. Anal. i Prilozhen.}, 40(3):85--89, 2006.

\bibitem{sinai1963}
Ya.~G. Sinai.
\newblock Some remarks on the spectral properties of ergodic dynamical systems.
\newblock {\em Uspekhi Matematicheskikh Nauk}, 18(113):41--54, 1963.

\bibitem{Stepin1968}
A.~M. Stepin.
\newblock Spectral properties of generic dynamical systems.
\newblock {\em Izv. Akad. Nauk SSSR Ser. Mat.}, 50(113):801--834, 1968.

\bibitem{MR1325699}
J.-P. Thouvenot.
\newblock Some properties and applications of joinings in ergodic theory.
\newblock In {\em Ergodic theory and its connections with harmonic analysis
  ({A}lexandria, 1993)}, volume 205 of {\em London Math. Soc. Lecture Note
  Ser.}, pages 207--235. Cambridge Univ. Press, Cambridge, 1995.

\bibitem{MR685378}
W.~A. Veech.
\newblock A criterion for a process to be prime.
\newblock {\em Monatsh. Math.}, 94(4):335--341, 1982.

\end{thebibliography}

\end{document}